\tikzset{join/.code=\tikzset{after node path={%
\ifx\tikzchainprevious\pgfutil@empty\else(\tikzchainprevious)%
edge[every join]#1(\tikzchaincurrent)\fi}}}
\tikzset{>=stealth',every on chain/.append style={join},
         every join/.style={->}}
\tikzstyle{labeled}=[execute at begin node=$\scriptstyle,
\newcommand{\arXiv}[1]{\href{http://arxiv.org/abs/#1}{\tt arXiv:\nolinkurl{#1}}}
\newcommand{\arxiv}[1]{\href{http://arxiv.org/abs/#1}{\tt arXiv:\nolinkurl{#1}}}
\newcommand{\googlebooks}[1]{(preview at \href{http://books.google.com/books?id=#1}{google books})}
\definecolor{dark-red}{rgb}{0.7,0.25,0.25}
\definecolor{dark-blue}{rgb}{0.15,0.15,0.55}
\definecolor{medium-blue}{rgb}{0,0,.8}
\definecolor{DarkGreen}{RGB}{0,150,0}
\theoremstyle{plain}
\newtheorem{thm}{Theorem}[section]
\newtheorem*{thm*}{Theorem}
\newtheorem{alphathm}{Theorem}
\newtheorem{cor}[thm]{Corollary}
\newtheorem*{cor*}{Corollary}
\newtheorem{lem}[thm]{Lemma}
\newtheorem*{quest*}{Question}
\theoremstyle{definition}
\newtheorem{defn}[thm]{Definition}
\newtheorem{nota}[thm]{Notation}
\newtheorem{rem}[thm]{Remark}
\DeclareMathOperator{\Aut}{Aut}
\DeclareMathOperator{\Gr}{Gr}
\DeclareMathOperator{\id}{id}
\DeclareMathOperator{\op}{op}
\DeclareMathOperator{\Tr}{Tr}
\DeclareMathOperator{\tr}{tr}
\DeclareMathOperator{\idd}{id}
\DeclareMathOperator{\ran}{range}
\DeclareMathOperator{\coeff}{coeff}
\newcommand{\comment}[1]{}
\newcommand{\be}{\begin{enumerate}[(1)]}
\newcommand{\ee}{\end{enumerate}}
\newcommand{\N}{\mathbb{N}}
\newcommand{\Z}{\mathbb{Z}}
\newcommand{\F}{\mathbb{F}}
\newcommand{\R}{\mathbb{R}}
\newcommand{\C}{\mathbb{C}}
\newcommand{\set}[2]{\left\{#1 \middle| #2\right\}}
\newcommand{\e}{\epsilon}
\newcommand{\noshow}[1]{}
\newcommand{\MR}[1]{}
\newcommand{\Asterisk}{\mathop{\scalebox{1.5}{\raisebox{-0.2ex}{$*$}}}}%
\newcommand{\p}{\partial}
\newcommand{\TL}{\cT\hspace{-.08cm}\cL}
\def\semicolon{;}
\def\applytolist#1{
    \expandafter\def\csname multi#1\endcsname##1{
        \def\multiack{##1}\ifx\multiack\semicolon
            \def\next{\relax}
        \else
            \csname #1\endcsname{##1}
            \def\next{\csname multi#1\endcsname}
        \fi
        \next}
    \csname multi#1\endcsname}
\def\calc#1{\expandafter\def\csname c#1\endcsname{{\mathcal #1}}}
\def\bbc#1{\expandafter\def\csname bb#1\endcsname{{\mathbb #1}}}
\def\bfc#1{\expandafter\def\csname bf#1\endcsname{{\mathbf #1}}}
\def\sfc#1{\expandafter\def\csname s#1\endcsname{{\sf #1}}}
\def\ffc#1{\expandafter\def\csname f#1\endcsname{{\mathfrak #1}}}
\tikzstyle{shaded}=[fill=red!10!blue!20!gray!30!white]
\tikzstyle{unshaded}=[fill=white]
\tikzstyle{empty box}=[circle, draw, thick, fill=white, opaque, inner sep=2mm]
\tikzstyle{annular}=[scale=.7, inner sep=1mm, baseline]
\tikzstyle{rectangular}=[scale=.75, inner sep=1mm, baseline=-.1cm]
\newcommand{\nbox}[6]{
	\draw[thick, #1] ($#2+(-#3,-#3)+(-#4,0)$) rectangle ($#2+(#3,#3)+(#5,0)$);
	\coordinate (ZZa) at ($#2+(-#4,0)$);
	\coordinate (ZZb) at ($#2+(#5,0)$);
	\node at ($1/2*(ZZa)+1/2*(ZZb)$) {#6};
}
\begin{document}

\title{Free product C$^{*}$-algebras associated to graphs, free differentials, and laws of loops}
\author{Michael Hartglass}
\date{\today}
\maketitle
\begin{abstract}
We study a canonical C$^*$-algebra, $\cS(\Gamma, \mu)$, that arises from a weighted graph $(\Gamma, \mu)$, specific cases of which were previously studied in the context of planar algebras.  We discuss necessary and sufficient conditions of the weighting which ensure simplicity and uniqueness of trace of $\cS(\Gamma, \mu)$, and study the structure of its positive cone.  We then study the $*$-algebra, $\cA$, generated by the generators of $\cS(\Gamma, \mu)$, and use a free differential calculus and techniques of Charlesworth and Shlyakhtenko, as well as Mai, Speicher, and Weber to show that certain ``loop" elements have no atoms in their spectral measure.  After modifying techniques of Shlyakhtenko and Skoufranis to show that self adjoint elements $x \in M_{n}(\cA)$ have algebraic Cauchy transform, we explore some applications to eigenvalues of polynomials in Wishart matrices and to diagrammatic elements in von Neumann algebras initially considered by Guionnet, Jones, and Shlyakhtenko.
\end{abstract}
%\tableofcontents

%%%%%%%%%%%%%%%%%%%%%%%%%%%%%%%%%%%%%%%%%%%%%%%%%%%%%%%%%%%

\section{Introduction}

The study of graphs and has been a prevailing force in the development of many areas of operator algebras.  Cuntz and Krieger \cite{MR561974} initiated the study of graph C$^{*}$-algebras, which are canonical C$^*$-algebras that are formed from directed graphs.  The principal graph of a planar algebra or subfactor  \cite{MR696688, MR1334479, MR1424954} has played an important role in the development and study of subfactor theory. 

Given a standard invariant of a subfactor $\cP_{\bullet}$, Popa reconstructed a Jones tower (see \cite{MR696688} for a definition) of II$_{1}$ factors whose standard invariant is $\cP_{\bullet}$ \cite{MR1334479}.  More recently, Guionnet, Jones, and Shlyakhtenko (GJS) gave a diagrammatic construction of a Jones' tower $\cM_{0} \subset \cM_{1} \subset \cM_{2} \subset \cdots$ of II$_{1}$ factors whose standard invariant is $\cP_{\bullet}$ \cite{MR2732052}.  In subsequent work, GJS constructed a (semi)finite factor by applying Shlyakhtenko's operator-valued generalization \cite{MR1704661} of Voiculescu's free Gaussian functor \cite{MR1217253} to obtain the isomorphism class of the $\cM_{k}$ in the case that $\cP_{\bullet}$ is finite-depth \cite{MR2807103}.  More specifically, if $\cP_{\bullet}$ is finite-depth with finite principal graph $\Gamma$, GJS constructed a finite von Neumann algebra, $\cM(\Gamma)$ by assigning to each edge $e$ of $\Gamma$, an $\ell^{\infty}(V(\Gamma))$-valued semicircular element $X_{e}$.  The factor $\cM_{0}$ was realized as a of corner $\cM(\Gamma)$ which was shown to be an interpolated free group factor.  

In later work \cite{MR3110503, 1208.5505}, the author developed a canonical free-product von Neumann algebra, $\cM(\Gamma, \mu)$ that one can associate to an arbitrary undirected, weighted, graph with weighting $\mu: V(\Gamma) \rightarrow \R^{+}$.  The key observation in \cite{MR3110503} is that if $(\Gamma, \mu)$ is a subgraph of $(\Gamma', \mu)$, then there is a canonical, possibly nonunital, inclusion $\cM(\Gamma, \mu) \rightarrow \cM(\Gamma', 
\mu)$, compressions of which are a standard embedding in the sense of \cite{MR1201693}.  This observation showed that if a planar algebra $\cP_{\bullet}$ is infinite depth, then in the construction in \cite{MR2732052}, $\cM_{k} \cong L(\F_{\infty})$ for all $k$.

In an effort to study the diagrammatic C$^{*}$-algebras ($\cA_{0} \subset \cA_{1} \subset \cA_{2} \subset \cdots$) that arise in the construction in \cite{MR2732052}, the author and Penneys defined the C$^{*}$-algebra analogue of $\cM(\Gamma, \mu)$, called $\cS(\Gamma, \mu)$ \cite{1401.2485, MR3266249}.   Although this algebra was defined for an arbitrary weighting $\mu$, most properties of $\cS(\Gamma, \mu)$ were studied only in the case of $\Gamma$ being the principal graph of a planar algebra with $\mu$ the associated weighting.   In this case, the authors proved that  $\cS(\Gamma, \mu)$ is simple, has unique trace, established $KK$-equivalence between $C_{0}(V(\Gamma))$ and $\cS(\Gamma, \mu)$.  Utilizing a Morita equivalence between $\cA_{k}$ and $\cS(\Gamma, \mu)$, the same properties were deduced for the algebras $\cA_{k}$.

In section \ref{sec:free} we will study $\cS(\Gamma, \mu)$ in the setting of an arbitrary connected undirected graph $\Gamma$ and arbitrary weighting $\mu$ on $V(\Gamma)$.  This C$^{*}$-algebra is generated by $C_{0}(V(\Gamma))$ (spanned by the indicator functions $\set{p_{\alpha}}{\alpha \in V(\Gamma)}$) and $C_{0}(V(\Gamma))$-valued semicircular elements $X_{e}$ associated to each edge $e$.  In the von Neumann algebra case, $\cM(\Gamma, \mu)$, the following was proven in \cite{MR3110503}.

\begin{thm*}[\cite{MR3110503}]
Suppose $\Gamma$ is a finite, connected, unoriented graph with at least two edges.  If $\alpha,\beta \in V(\Gamma)$, then write $\alpha \sim \beta$ if $\alpha$ and $\beta$ are joined by at least one edge, and let $n_{\alpha, \beta}$ be the number of edges joining with $\alpha$ and $\beta$ as endpoints.  Finally, let $V_{>}$ be the set of vertices, $\beta$ satisfying $\mu(\beta) > \sum_{\alpha \sim \beta} n_{\alpha, \beta}\mu(\alpha)$.  We have
$$
\cM(\Gamma, \mu) \cong L(\F_{t}) \oplus \bigoplus_{\gamma \in V_{>}} \overset{r_{\gamma}}{\C}
$$
where $r_{\gamma} \leq p_{\gamma}$ and $\tr(r_{\gamma}) = \mu(\alpha) - \sum_{\alpha \sim \gamma} n_{\alpha, \beta}\mu(\alpha)$ (see Notation \ref{nota:free} for the meaning of the direct sum notation).  Moreover, the parameter, $t$, can be computed using Dykema's ``free dimension" formulas \cite{MR1201693, MR3164718}.  In particular, $\cM(\Gamma, \mu)$ is a factor if and only if $V_{>}$ is non-empty.
\end{thm*}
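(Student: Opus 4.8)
The plan is to realise $\cM(\Gamma,\mu)$ as an iterated amalgamated free product over the finite-dimensional abelian algebra $B:=\ell^\infty(V(\Gamma))=\bigoplus_{\alpha\in V(\Gamma)}\C p_\alpha$ and then feed it into Dykema's free-product and free-dimension machinery. Normalising $\Tr$ to a tracial state so that $\Tr(p_\alpha)=\mu(\alpha)/\sum_\beta\mu(\beta)$, the construction of $\cM(\Gamma,\mu)$ makes $\{X_e\}_{e\in E(\Gamma)}$ a $B$-valued semicircular family that is free with amalgamation over $B$, so $\cM(\Gamma,\mu)=\Asterisk_B\,\cM_e$, where $\cM_e$ is the von Neumann algebra generated by $B$ and $X_e$. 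First I would pin down each $\cM_e$. For an edge $e$ with endpoints $\alpha\neq\beta$, the covariance of $X_e$ forces $p_\gamma X_e=0$ for $\gamma\notin\{\alpha,\beta\}$ and $p_\alpha X_e p_\alpha=p_\beta X_e p_\beta=0$, so $X_e=c+c^{*}$ with $c:=p_\alpha X_e p_\beta$; taking the polar decomposition of $c$ and reading off the distribution of $|c|$ identifies $(p_\alpha+p_\beta)\cM_e(p_\alpha+p_\beta)$ as $M_2(\C)\otimes L^\infty[0,1]$ (rescaled) when $\mu(\alpha)=\mu(\beta)$, and as a direct sum of $M_2$ over a diffuse abelian algebra and an atom $\C r$ with $r\leq p_\alpha$ and $\Tr(r)=\mu(\alpha)-\mu(\beta)$ when $\mu(\alpha)>\mu(\beta)$; off this corner $\cM_e$ is just $B$, and a loop at $\alpha$ instead contributes a diffuse abelian algebra in $p_\alpha\cM_e p_\alpha$. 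In each case $\cM_e$ lies in the class $\mathfrak C$ of finite von Neumann algebras which are direct sums of matrix amplifications of interpolated free group factors and of finite-dimensional algebras, and I would record the traces of its minimal central projections.

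Next I would assemble the graph. Enumerating the edges $e_1,\dots,e_m$ (recall $m\geq2$) and forming $\cM(\Gamma,\mu)=\Asterisk_B\cM_{e_i}$ by repeatedly amalgamating over the fixed algebra $B$, one applies Dykema's results, which show that $\mathfrak C$ is closed under amalgamated free products over finite-dimensional abelian subalgebras and describe exactly how the minimal central projections and the free dimension transform. Proceeding edge by edge, the diffuse part of $\cM(\Gamma,\mu)$ turns out to be a single interpolated free group factor: connectedness of $\Gamma$ prevents it from splitting as a direct sum, and the hypothesis $m\geq2$ rules out the degenerate outcomes (a bare matrix algebra, or $L^\infty[0,1]$). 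Tracking the atoms, the only minimal central projections of $\cM(\Gamma,\mu)$ sit under the $p_\gamma$; at the stage where an edge incident to $\gamma$ is adjoined, Dykema's absorption rule strips a chunk of trace $\mu(\alpha)$ off whatever ``leftover'' currently lives under $p_\gamma$ (for each of the $n_{\alpha,\gamma}$ edges from $\gamma$ to $\alpha$) and absorbs the remainder into the free group factor part, so that after all edges are in place a central projection $r_\gamma\leq p_\gamma$ survives precisely when $\mu(\gamma)>\sum_{\alpha\sim\gamma}n_{\alpha,\gamma}\mu(\alpha)$, with $\Tr(r_\gamma)=\mu(\gamma)-\sum_{\alpha\sim\gamma}n_{\alpha,\gamma}\mu(\alpha)$. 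This is the asserted decomposition, and factoriality is read off from it.

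To pin down $t$ I would run Dykema's free-dimension bookkeeping: $\mathrm{fdim}$ is additive under amalgamated free product over $B$ up to a correction of $-\mathrm{fdim}(B)$ per amalgamation, with $\mathrm{fdim}(B)=1-\sum_\alpha\Tr(p_\alpha)^2$, so $\mathrm{fdim}(\cM(\Gamma,\mu))=\sum_{i=1}^{m}\mathrm{fdim}(\cM_{e_i})-(m-1)\,\mathrm{fdim}(B)$. Substituting the free dimension of each edge piece from the first step, subtracting the contribution of the atoms indexed by $V_{>}$, and applying the compression and free-group-index identities then produces an explicit value of $t$, which one verifies agrees with what the free-dimension formulas of \cite{MR1201693,MR3164718} give when applied directly to $(\Gamma,\mu)$.

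The step I expect to be the main obstacle is the atom bookkeeping in the assembly phase. One has to set up the induction so that the ambient class $\mathfrak C$ is genuinely stable under every operation used, check that the diffuse summand never degenerates, and above all carry the traces of the minimal central projections faithfully through each amalgamated free product, so that exactly the projections indexed by $V_{>}$ persist and with exactly the stated traces, while treating parallel edges and loops on the same footing. Dykema's absorption and free-dimension theorems are the correct tools, but organising them into a clean induction over the edges of $\Gamma$ is where the real content lies; by comparison the single-edge computation and the closing free-dimension identity are essentially routine once the framework is set up.
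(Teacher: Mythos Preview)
This theorem is not proved in the present paper; it is quoted from \cite{MR3110503} and only the C$^*$-analogue (Theorem~A) is established here. So there is no proof in this paper to compare against.

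That said, your outline is essentially the right strategy and is in the spirit of how the result is obtained in \cite{MR3110503}: exploit the $B$-freeness of the $X_e$ to write $\cM(\Gamma,\mu)$ as an iterated amalgamated free product over $B=\ell^\infty(V(\Gamma))$, identify each single-edge algebra $\cM_e$ explicitly from the free-Poisson law of $X_\e^*X_\e$ (this is exactly Theorem~\ref{thm:freeexpectation}(3) in the present paper), and then push everything through Dykema's free-product machinery. One organizational point: rather than inducting directly on edges and invoking a general ``$\mathfrak C$ is closed under amalgamated free products over finite-dimensional abelians'' statement, the argument in \cite{MR3110503} proceeds by inducting on connected subgraphs and repeatedly using compression identities of the type recorded here as Lemma~\ref{lem:compressfree}, together with the notion of a \emph{standard embedding} \cite{MR1201693}; this sidesteps having to verify closure of $\mathfrak C$ under a fully general amalgamated free product at each stage. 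Your anticipated ``main obstacle''---the atom bookkeeping---is indeed where the work lies, and your description of how each edge incident to $\gamma$ strips off $\mu(\alpha)$ worth of trace from the residual projection under $p_\gamma$ is the correct heuristic.
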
  
We prove the C$^{*}$-algebra analogue.
\begin{alphathm}
Let $\Gamma$ and $V_{>}$ be as in the statement of the previous theorem .  Let $V_{=}$ be the set of vertices, $\beta$ satisfying $\mu(\beta) = \sum_{\alpha \sim \beta} n_{\alpha, \beta}\mu(\alpha)$ and $V_{\geq} = V_{>} \cup V_{=}$.  Let $I$ be the norm-closed ideal generated by some $p_{\alpha}$ with $\alpha \in V(\Gamma) \setminus V_{\geq}$. Then $I$ contains $\set{p_{\beta}}{\beta \in V(\Gamma) \setminus V_{\geq}}$ and does not intersect $\set{p_{\gamma}}{\gamma \in V_{\geq}}$.  In addition, $I$ is generated by $\set{X_{e}}{e \in E(\Gamma)}$.  Furthermore, we have
\be  

\item $I$ is simple, has unique tracial state, and has stable rank 1.

\item $I$ is unital if and only if $V_{=}$ is empty.   If $V_{=}$ is empty, then
$$
\cS(\Gamma, \mu) = I \oplus \bigoplus_{\gamma \in V_{>}} \overset{r_{\gamma}}{\C}
$$
with $r_{\gamma} \leq p_{\gamma}$ and $\tr(r_{\gamma}) = \mu(\alpha) - \sum_{\alpha \sim \gamma} n_{\alpha, \beta}\mu(\alpha)$. If $V_{=}$ is not empty, then 
$$
\cS(\Gamma, \mu) = \mathcal{I} \oplus \bigoplus_{\gamma \in V_{>}} \overset{r_{\gamma}}{\C}
$$
where $\mathcal{I}$ is unital, the strong operator closures of $I$ and $\mathcal{I}$ coincide in $L^{2}(\cS(\Gamma, \mu), \Tr)$, and $\mathcal{I}/I \cong \bigoplus_{\beta \in V_{=}} \C$. 

\item $K_{0}(I) \cong \Z\set{[p_{\beta}]}{\beta \in V\setminus V_{\geq}} \text{ and } K_{1}(I) = \{0\}$ where the first group is the free abelian group on the classes of projections $[p_{\beta}]$.  Furthermore, $K_{0}(I)^{+} = \set{x \in K_{0}(I)}{\Tr(x) > 0} \cup \{0\}$.
\ee

In particular, $\cS(\Gamma, \mu)$ is simple with unique tracial state if and only if $V_{\geq}$ is empty.
\end{alphathm}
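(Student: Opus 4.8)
The plan is to realize $\cS(\Gamma,\mu)$ concretely by an induction on $|E(\Gamma)|$, using that attaching a new edge $e$ to a subgraph $\Gamma'$ amounts to forming the reduced amalgamated free product of $\cS(\Gamma',\mu)$ with the one-edge algebra $\cS(\{e\},\mu)$ over the diagonal $C(V(\Gamma))=\spann\{p_\alpha\}$, since the $C(V(\Gamma))$-valued semicircular elements $X_e$ are by construction free with amalgamation over $C(V(\Gamma))$. First I would pin down the building blocks: for a single edge $e$ with endpoints $\alpha,\beta$, say $\mu(\alpha)\ge\mu(\beta)$, writing $W=p_\beta X_e p_\alpha$ one gets $WW^*\in p_\beta\cS p_\beta$, $W^*W\in p_\alpha\cS p_\alpha$ with free-Poisson-type laws, which identifies $\cS(\{e\},\mu)$ with a sub-C$^*$-algebra of $M_2(C[0,c])$ having prescribed degenerate behaviour at the interval endpoint corresponding to $0$ in the spectrum, plus a direct summand $\overset{r_\alpha}{\C}$ of trace $\mu(\alpha)-\mu(\beta)$ precisely when $\mu(\alpha)>\mu(\beta)$, namely the spectral projection of $W^*W$ at its \emph{isolated} atom $\{0\}$. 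This is the only mechanism producing $\C$-summands, and I would check it survives the induction.

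Carrying this through, the key local fact is the nature of the spectrum of $Y_\beta:=\sum_{e\ni\beta}p_\beta X_e^2 p_\beta\in p_\beta\cS p_\beta$, which by the semicircular covariance is free-Poisson type with ``aspect ratio'' $\mu(\beta)/\sum_{\alpha\sim\beta}n_{\alpha,\beta}\mu(\alpha)$: (i) $Y_\beta$ is invertible in $p_\beta\cS p_\beta$ when $\beta\notin V_{\geq}$; (ii) $0$ lies in the spectrum of $Y_\beta$ but is neither isolated nor an atom when $\beta\in V_{=}$; (iii) $0$ is an isolated atom of $Y_\beta$, of trace $\mu(\beta)-\sum_{\alpha\sim\beta}n_{\alpha,\beta}\mu(\alpha)$, when $\beta\in V_{>}$. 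Case (iii) peels off the summands $\overset{r_\beta}{\C}$ and reduces everything to $V_{>}=\emptyset$. Now let $I$ be the ideal generated by a single $p_{\alpha_0}$ with $\alpha_0\notin V_{\geq}$. From (i), $Y_\beta$ is invertible in $p_\beta\cS p_\beta$ for every $\beta\notin V_{\geq}$; together with connectedness of $\Gamma$ and the fact that $V_{\geq}\ne V(\Gamma)$ (a short summation argument using $|E(\Gamma)|\ge 2$) this lets me propagate from $\alpha_0$ and conclude both that $p_\beta\in I$ for all $\beta\notin V_{\geq}$ and that $I$ equals the ideal generated by $\{X_e\}_{e\in E(\Gamma)}$, since each $X_e=p_{\alpha(e)}X_ep_{\beta(e)}+p_{\beta(e)}X_ep_{\alpha(e)}$ has an endpoint outside $V_{\geq}$. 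Dually, the concrete model identifies $\cS(\Gamma,\mu)/I$ with $\bigoplus_{\gamma\in V_{\geq}}\C$, the image of $p_\gamma$ being the $\gamma$-th minimal projection, whence $I\cap\{p_\gamma:\gamma\in V_{\geq}\}=\emptyset$. The delicate point is case (ii): there $p_\beta$ is a norm-limit of elements of $I$ but not a member of $I$, which is exactly why $I$ is non-unital iff $V_{=}\ne\emptyset$, why $\mathcal I:=I+\spann\{p_\gamma:\gamma\in V_{\geq}\}$ is its relative unitization in $\cS(\Gamma,\mu)$, and why $\mathcal I/I\cong\bigoplus_{\beta\in V_{=}}\C$.

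For part (1), after the Morita equivalence that collapses the various $C(V(\Gamma))$-corners, $I$ is exhibited as a reduced free product of building blocks of the form $C_0(0,1]$, $C[0,1]$, $\C^2$, etc., with prescribed traces; the hypothesis $V_{\geq}=\emptyset$ is precisely the genericity needed for the simplicity-and-unique-trace results of Avitzour and Dykema and the stable-rank-one result of Dykema and R{\o}rdam for reduced free products to apply, exactly as in the subfactor-graph case treated in \cite{1401.2485, MR3266249}. Part (3) then comes from computing $K$-theory of these reduced free products via the standard six-term sequences: $K_1(I)=0$ and $K_0(I)$ is free abelian on $\{[p_\beta]:\beta\notin V_{\geq}\}$; and $K_0(I)^+=\{x:\Tr(x)>0\}\cup\{0\}$ follows from the unique trace together with strict comparison of projections, available here because of stable rank one. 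Finally the ``in particular'' is immediate: if $V_{\geq}=\emptyset$ then by (1)--(2) $\cS(\Gamma,\mu)=I$ is simple with unique tracial state; if $V_{\geq}\ne\emptyset$ then either $V_{>}\ne\emptyset$, giving a $\C$ direct summand, or $V_{>}=\emptyset\ne V_{=}$, giving the nontrivial proper ideal $I\subsetneq\mathcal I=\cS(\Gamma,\mu)$ --- in either case $\cS(\Gamma,\mu)$ is not simple.

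The step I expect to be the main obstacle is the edge-by-edge bookkeeping at the $V_{=}$ vertices: tracking precisely how the degenerate fibres of the one-edge and path algebras combine under iterated amalgamated free products so that the atom at $0$ genuinely appears at every $V_{>}$-vertex and at no $V_{=}$-vertex, and verifying that the resulting $I$ really is a reduced free product of the advertised building blocks, so that the black-box simplicity, stable-rank and $K$-theory results can be invoked.
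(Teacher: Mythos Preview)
Your outline tracks the paper's strategy in broad strokes (corner analysis, free-product simplicity theorems, six-term sequence), but two of your key technical claims are wrong as stated, and a third step is incomplete.

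First, $Y_\beta=\sum_{e\ni\beta}p_\beta X_e^2 p_\beta$ is \emph{not} free-Poisson. Each summand $X_{\e_i}^*X_{\e_i}$ is free-Poisson in $(p_\beta\cS p_\beta,\Tr)$ and the summands are free from one another, but they carry different shape parameters (depending on the ratios $\mu(s(\e_i))/\mu(\beta)$), and the free additive convolution of free-Poisson laws with different shapes is not free-Poisson. The paper obtains your trichotomy (i)--(iii) by a different route: it passes to the \emph{support projections} $p_{\e_i}$ of the $X_{\e_i}^*X_{\e_i}$, which are free projections in $p_\beta\cS p_\beta$ with traces $\min(\mu(\beta),\mu(s(\e_i)))$, and invokes an elementary induction on sums of free projections (Corollary~\ref{cor:sumproj}). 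This also exposes a gap in your propagation step: to pass \emph{through} a vertex $\gamma\in V_{\geq}$ on the way to some $\beta'\notin V_{\geq}$, invertibility of $Y_\gamma$ is unavailable, and one must instead show that the individual $p_{\e_i}\le p_\gamma$ lie in $I$. That requires a separate free-product simplicity argument inside $p_\gamma\cS p_\gamma$ (Case~2 of Lemma~\ref{lem:Iminimal} in the paper), which your sketch omits.

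Second, your positive-cone argument does not go through: stable rank one together with a unique trace does \emph{not} imply strict comparison of projections for simple C$^*$-algebras (there are simple unital AH algebras with these properties and perforated $K_0$). The paper does not appeal to any abstract comparison principle; instead it invokes the specific positive-cone theorems available for reduced (amalgamated) free products (Theorem~\ref{thm:Avi}(3) and Lemma~\ref{lem:simpleamalgamated}), applied to the explicit free-product decompositions of corners of $I$ already established in the simplicity proof (Lemma~\ref{lem:Isimple}), and then verifies case by case that the images of $K_0$ of the free factors generate all of $K_0(I)$. That last verification is itself nontrivial and occupies the proof of the paper's final lemma in the section.
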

A version of this theorem for infinite $\Gamma$ is given in Corollary \ref{cor:infinite} below. 

Section \ref{sec:poly} switches gears and studies elements of $\cA$, the $*$-algebra generated by $\set{p_{\alpha}}{\alpha \in V(\Gamma)} \cup \set{X_{e}}{e \in E(\Gamma)}$.  We develop a free differential calculus along the lines of Voiculescu \cite{MR1228526}.  Using modifications of the methods in \cite{1408.0580, 1407.5715}, we are able to prove the following theorem.

\begin{alphathm}
Let $Q \in \cA$ be nonzero and satisfying $Qp_{\alpha} = Q$ and $\mu(\alpha) = \min\set{\mu(\beta)}{\beta \in V(\Gamma)}$  If $a = a^{*} \in \cM(\Gamma, \mu)$, $ap_{\alpha} = a$, and $Qa = 0$, then $a = 0$.  In particular, if $Q = Q^{*}$, then the law of $Q$ with respect to $\Tr$ has no atoms.
\end{alphathm}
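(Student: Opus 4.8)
The plan is to reduce the assertion to a statement about kernels of $Q$ and then to run an amalgamated (operator‑valued) version of the free‑difference‑quotient arguments of Charlesworth--Shlyakhtenko \cite{1408.0580} and Mai--Speicher--Weber \cite{1407.5715}. It suffices to show there is no nonzero projection $p \le p_\alpha$ with $Qp = 0$: if $a = a^* \in \cM(\Gamma, \mu)$ satisfies $ap_\alpha = a$ (equivalently $a = p_\alpha a p_\alpha$) and $Qa = 0$ with $a \ne 0$, then any spectral projection $p$ of $a$ for a Borel set bounded away from $0$ has $p \le p_\alpha$, $Qp = 0$, and is nonzero for a suitable such set. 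Granting the kernel statement, the last sentence follows: for $Q = Q^*$ with $Q \notin \C p_\alpha$ (automatic for a genuine loop element, by freeness of the $X_e$ over $C_0(V(\Gamma))$), an atom of the law of $Q$ at $\lambda$ --- read in the corner $p_\alpha \cM(\Gamma, \mu) p_\alpha$ --- gives a nonzero spectral projection $q_\lambda \le p_\alpha$ with $(Q - \lambda p_\alpha)q_\lambda = 0$, and since $Q - \lambda p_\alpha \in \cA$ is nonzero and satisfies $(Q - \lambda p_\alpha)p_\alpha = Q - \lambda p_\alpha$, the kernel statement applied to $Q - \lambda p_\alpha$ gives a contradiction.

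The differential calculus is the following. The generators $(X_e)_{e \in E(\Gamma)}$ form a semicircular family with amalgamation over $B := C_0(V(\Gamma))$, free over $B$, with covariance determined by $\mu$; accordingly each free difference quotient $\partial_e$ --- the $B$-bimodular derivation of $\cA$ with $\partial_e p_\beta = 0$ and $\partial_e X_{e'} = \delta_{e,e'} c_e$ for the appropriate sum $c_e$ of tensor products of endpoint projections --- admits a conjugate variable, with $\partial_e^*(1 \otimes 1) \in \cA$ a $\mu$-weighted multiple of $X_e$; in fact $(X_e)$ carries a dual system over $B$. Thus the $\partial_e$ are closable and the conjugate‑variable/dual‑system hypotheses underlying \cite{1408.0580, 1407.5715} are in force. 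This is where $\mu(\alpha) = \min_\beta \mu(\beta)$ is used: it forces $\alpha \notin V_{>}$, so $p_\alpha \cM(\Gamma, \mu) p_\alpha$ is diffuse (an interpolated free group factor), and, more to the point, it keeps the covariance ratios $\mu(\beta)/\mu(\alpha)$ that appear when $\partial_e$ and $\partial_e^*$ are computed inside this corner bounded --- which is what keeps the closed derivations and the estimates below under control at $\alpha$.

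For the kernel statement, let $p \le p_\alpha$ be a nonzero projection with $Qp = 0$, so $p$ is the spectral projection of $Q$ at $0$ computed in the corner. Using the dual system, the corner resolvents $(Q - z p_\alpha)^{-1}$ ($z$ off the spectrum) lie in $\bigcap_e \Dom(\overline{\partial_e})$ with $\overline{\partial_e}(Q - z p_\alpha)^{-1} = -(Q - z p_\alpha)^{-1}(\partial_e Q)(Q - z p_\alpha)^{-1}$. Feeding these into the integration‑by‑parts identity against the conjugate variables $\partial_e^*(1 \otimes 1)$, summed over $e$, and letting $z \to 0$ --- where $z (Q - z p_\alpha)^{-1} \to -p$ --- yields, exactly as in the atoms‑of‑polynomials arguments of \cite{1408.0580, 1407.5715}, an identity whose terms are manifestly nonnegative and one of which is a positive constant times $\Tr(p)$; hence $\Tr(p) = 0$ and $p = 0$, a contradiction. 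The main obstacle is carrying this out over $B = C_0(V(\Gamma))$ rather than over $\C$: one must track the $\mu$-weighted covariance through the derivation identities and check that the resolvent estimates and the integration‑by‑parts remain valid in the corner at $\alpha$ (which is exactly where the minimality of $\mu(\alpha)$ is needed); once the relevant operators are under control, the concluding computation $\Tr(p) = 0$ is essentially formal.
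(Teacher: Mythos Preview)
Your outline diverges from the paper's argument and, as written, does not go through. The paper does \emph{not} run a resolvent/integration-by-parts computation that produces $\Tr(p)=0$ from positivity; it follows the degree-reduction scheme of \cite{1407.5715}. The key step is Theorem~\ref{thm:zero}: if $aQ=0=Qb$ then $a\,\partial_\e(Q)\,b=0$ for every $\e$. This is proved by writing $Q\otimes p_\beta - p_\alpha\otimes Q = \sum_\e \partial_\e(Q)\#(X_\e\otimes 1 - 1\otimes X_\e)$, identifying $\cA\otimes_{\mathrm{alg}}\cA$ with finite-rank operators on $\cH$, and invoking the commutator rigidity Lemma~\ref{lem:com2} (namely $\sum_\e [R_\e, JX_\e J]=0 \Rightarrow R_\e=0$), which in turn rests on the rank-one computation of Lemma~\ref{lem:com1}. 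One then inducts on $\deg Q$: taking $q$ to be the projection onto $\ker(Q^*)$, one has $qQ=0=Qa$, hence $q\,\partial_\e(Q)\,a=0$, hence $(\Tr\otimes\id)(q\,\partial_\e(Q))\cdot a=0$; for $\e$ chosen so that $X_\e$ is the leftmost letter of some top-degree monomial of $Q$, the element $(\Tr\otimes\id)(q\,\partial_\e(Q))\in\cA$ is nonzero, has right support under $p_\alpha$, and has strictly smaller degree, so $a=0$ by induction.

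Your account of where $\mu(\alpha)=\min_\beta \mu(\beta)$ is used is also off. Closability of $\partial_\e$ and existence of conjugate variables hold for every weighting (Lemma~\ref{lem:closable}); nothing about ``covariance ratios'' needs to be controlled. Minimality enters only in the trace count of the inductive step: if $a\neq 0$ then $\Tr(\ker Q)>\sum_{\gamma\neq\alpha}\mu(\gamma)$, and since $\Tr(q)=\Tr(\ker Q)$, minimality of $\mu(\alpha)$ forces $q p_\gamma \neq 0$ for \emph{every} vertex $\gamma$---in particular for the left-support vertex $\beta$ of $Q$. That is exactly what makes the top-degree coefficient $\Tr(qp_\beta)$ in $(\Tr\otimes\id)(q\,\partial_\e(Q))$ nonzero. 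Your resolvent sketch offers no substitute for this step, and the assertion that letting $z\to 0$ in an integration-by-parts identity yields ``manifestly nonnegative'' terms, one of which is a positive multiple of $\Tr(p)$, is neither what \cite{1408.0580,1407.5715} actually do (both run degree reduction) nor something that follows from the ingredients you list; without that computation carried out, the argument has no content.
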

Section \ref{sec:poly} also uses the techniques of \cite{MR3356937} to prove the following theorem
\begin{alphathm}
Let $x \in M_{n}(\cA)$ be self-adjoint.  Then the Cauchy transform of $x$ is algebraic
\end{alphathm}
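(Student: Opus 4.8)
The plan is to adapt the linearization-and-elimination argument of Shlyakhtenko and Skoufranis \cite{MR3356937}, run over the finite-dimensional coefficient algebra $B := \ell^{\infty}(V(\Gamma))$ rather than over $\C$. Since a fixed $x \in M_{n}(\cA)$ is a noncommutative polynomial in only finitely many of the $p_{\alpha}$ and $X_{e}$, I would first replace $\Gamma$ by a finite subgraph and assume $\Gamma$ finite; then $B$ is finite-dimensional, $(X_{e})_{e \in E(\Gamma)}$ is a $B$-valued semicircular family inside $(\cM(\Gamma, \mu), \Tr)$ with a fixed completely positive covariance $\eta \colon B \to B$ coming from $\Gamma$ and $\mu$, and there is a trace-preserving conditional expectation $E_{B} \colon \cM(\Gamma, \mu) \to B$. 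Writing $\nu_{x}$ for the distribution of $x$ with respect to the trace on $M_{n}(\cA)$ inherited from $\Tr$, we must show that $G_{x}(z) = \int (z-t)^{-1}\, d\nu_{x}(t)$ is algebraic over $\C(z)$. Viewing $x = x^{*}$ as a noncommutative polynomial in the $X_{e}$ with coefficients in $M_{n}(B)$, the self-adjoint linearization trick (as in \cite{1407.5715}) gives an $N \geq n$ and a self-adjoint affine pencil $L = \gamma_{0} + \sum_{e \in E(\Gamma)} \gamma_{e} \otimes X_{e}$, with $\gamma_{0}, \gamma_{e}$ self-adjoint in $M_{N}(B)$, such that for $\Im z > 0$ the resolvent $(z \cdot 1 - x)^{-1}$ is the upper-left $n \times n$ corner of $(\Lambda(z) - L)^{-1}$, where $\Lambda(z) \in M_{N}(B)$ has $z\,I_{n}$ in that corner and zeros elsewhere. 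Since $E_{B}$ is trace-preserving and the corner projection lies in $M_{N}(\C)$, $G_{x}(z)$ is obtained from the $M_{N}(B)$-valued Cauchy transform $\cG(z) := (\id_{M_{N}} \otimes E_{B})\big[(\Lambda(z) - L)^{-1}\big]$ by applying a fixed $\C$-linear functional, so it suffices to prove the entries of $\cG$ are algebraic over $\C(z)$.

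Because $(X_{e})$ is $B$-semicircular, $L$ is an $M_{N}(B)$-valued semicircular element with mean $\gamma_{0}$ and an explicit $\C$-linear (indeed completely positive) covariance $\eta_{N} \colon M_{N}(B) \to M_{N}(B)$ assembled from $\eta$ and the $\gamma_{e}$, so operator-valued free probability yields the matrix Dyson equation
$$\big(\Lambda(z) - \gamma_{0} - \eta_{N}(\cG(z))\big)\,\cG(z) = 1_{M_{N}(B)}$$
for all $\Im z > 0$ (at the degenerate matrix $\Lambda(z)$ this follows by replacing its vanishing lower-right block with $i\e I_{N-n}$ and letting $\e \to 0^{+}$). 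Identifying $M_{N}(B) \cong M_{N}(\C)^{|V(\Gamma)|} \cong \C^{D}$ with $D = N^{2}|V(\Gamma)|$, this relation unpacks into a system of $D$ polynomial equations in $z$ and the $D$ coordinates of $\cG$, whose common-zero set $\cV \subseteq \C \times \C^{D}$ is an affine algebraic variety containing the graph of $\cG$. If one knows that for generic $z$ the system has only finitely many solutions — equivalently, that the irreducible component $\cV_{0}$ of $\cV$ containing the graph of $\cG$ is one-dimensional with dominant projection to the $z$-axis — then eliminating the $\cG$-coordinates by resultants produces a nonzero polynomial $R$ with $R(z, G_{x}(z)) = 0$, so $G_{x}$ is algebraic over $\C(z)$.

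The main obstacle I expect is exactly this finiteness. The natural route is to rewrite the Dyson equation, for invertible $\cG$, as $\cG(z)^{-1} + \eta_{N}\big(\cG(z)\big) = \Lambda(z) - \gamma_{0}$: the rational map $\Phi \colon g \mapsto g^{-1} + \eta_{N}(g)$ on $M_{N}(B)$ is a submersion at every invertible $g$ of small norm — there its differential is dominated by $\dot g \mapsto -g^{-1}\dot g\,g^{-1}$, which is invertible — hence $\Phi$ is dominant and so generically finite-to-one; one then must verify that the affine line $z \mapsto \Lambda(z) - \gamma_{0}$ is not contained in the exceptional locus of $\Phi$ and that the analytic branch $\cG(z)$ avoids the non-invertible locus for $\Im z$ large, where it is the genuine operator-valued Cauchy transform of the bounded self-adjoint element $L$ and is invertible. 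This non-degeneracy of the specialization is the technical heart of \cite{MR3356937}; the only genuine modification needed here is to carry the argument over the $N^{2}|V(\Gamma)|$-dimensional algebra $M_{N}(B)$ in place of $M_{N}(\C)$, finiteness of this dimension being precisely what the reduction to a finite subgraph provides. Checking that $\Lambda(z) - L$ is invertible on the upper half-plane and that $\cG$ satisfies the fixed-point equation over $M_{N}(B)$ is routine given the operator-valued free-probability machinery.
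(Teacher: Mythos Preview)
Your route is not the one taken by either \cite{MR3356937} or this paper. Both proceed through noncommutative formal power series: one first shows that the loop--moment generating series $P^{\alpha}$ is algebraic in $\C\langle\langle \mathscr{X}\rangle\rangle$ --- here via the moment recursion of Lemma~\ref{lem:rec}, which exhibits $(P^{\alpha} - \mu(\alpha)\alpha)_{\alpha \in V}$ as the unique solution of an explicit proper algebraic system --- and then, for a given $P \in \cA_{rat}[[z]]$, lifts to $\overline{P} \in \C\langle \mathscr{X}\rangle_{rat}[[z]]$, takes the Hadamard product $\overline{P}\odot P^{\Gamma}$ (algebraic by \cite{MR0142781}), and substitutes $1$ for every letter to obtain $\Tr_{\cM}(P) \in \C_{alg}[[z]]$; Lemma~\ref{lem:SS1} then finishes. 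Your linearization--Dyson equation--elimination outline is a genuinely different strategy, and the sentence ``this non-degeneracy of the specialization is the technical heart of \cite{MR3356937}'' is a misattribution: that paper does not run an elimination argument from the matrix Dyson equation, so the step you defer to it is not done there.

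The concrete gap is in the finiteness step. You argue that $\Phi(g) = g^{-1} + \eta_{N}(g)$ is a submersion at invertible $g$ of \emph{small} norm, and then want the line $z \mapsto \Lambda(z) - \gamma_{0}$ to avoid the positive-dimensional-fiber locus, invoking $\cG(z)$ for $\Im z$ large. But $\cG(z)$ is not small there: by the Schur-complement form of Anderson's linearization the lower $(N-n)\times(N-n)$ block of $(\Lambda(z)-L)^{-1}$ tends to a fixed nonzero matrix coming from the invertible constant lower block of $L$, so $\cG(z)$ tends to a \emph{non-invertible} limit and your ``small $g$'' differential estimate does not apply along the actual analytic branch. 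On the invertible locus $\ker D\Phi_{g}$ is the fixed-point set of $\dot g \mapsto g\,\eta_{N}(\dot g)\,g$; along the branch for large $\Im z$ this degenerates to a fixed linear equation on the lower block with no a priori reason to be nonsingular. The repair is to keep your own $i\epsilon$-regularization in the elimination step as well: on the variety cut out by $g\bigl(\Lambda_{\epsilon}(z)-\gamma_{0}-\eta_{N}(g)\bigr)=1$ in the variables $(z,\epsilon,g)$, the argument $\Lambda_{\epsilon}(z)$ has strictly positive imaginary part whenever $\epsilon>0$ and $\Im z>0$, and then the standard contraction estimate for the matrix Dyson equation over $M_{N}(B)$ does make $D\Phi$ invertible at $\cG_{\epsilon}(z)$; hence the component through the analytic two-parameter family is smooth of dimension $2$, and intersecting with $\{\epsilon=0\}$ gives a $1$-dimensional algebraic set containing the graph of $\cG$. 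That is the missing idea, and it is not supplied by \cite{MR3356937}.
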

As a result, further deductions can be made about the laws of self-adjoint elements in $M_{n}(\cA)$.  See Corollary \ref{cor:alg} and Theorem \ref{thm:entropy} below.

Section \ref{sec:app} is devoted to applications of the previous sections to the limiting laws of polynomials in Wishart matrices and to diagrammatic elements in the construction in \cite{MR2732052}.  Much like the observations about Gaussian unitary ensembles in \cite{MR3356937}, it is shown that eigenvalues in self-adjoint polynomials of Wishhart matrices also experience a weak repulsion.

\subsection{Acknowledgements}

The author would like to thank Ken Dykema, Brent Nelson, David Penneys, and Dima Shlyakhtenko  for helpful conversations.  The author is especially indebted to Dima Shlyakhtenko for conversations relating the methods in \cite{1408.0580}.

%%%%%%%%%%%%%%%%%%%%%%%%%%%%

\section{The free graph algebra}\label{sec:free}

\subsection{Constructing the algebra}

We will start with countable, weighted, connected, undirected, locally finite graph, $\Gamma = (\Gamma, V, E, \mu)$ where $V$ is the vertex set of $\Gamma$, $E$ is the edge set, and $\mu : V \rightarrow \R_{>0}$ is a strictly positive function on the vertices of $\Gamma$.  We will now form the directed version of $\Gamma$ as in \cite{1401.2485}.

\begin{defn}\label{defn:direct}
If $\Gamma$ is an undirected graph, we will form the directed version of $\Gamma$, $\vec{\Gamma} = (\vec{\Gamma}, \vec{V}, \vec{E}, \vec{\mu}, s, t)$ as the weighted directed graph with the following properties.
\begin{enumerate}

\item $\vec{V} = V$ and $\vec{\mu} = \mu$.

\item $\vec{E}$ is constructed as follows:  For each $e \in E(\Gamma)$ having $\alpha \neq \beta$ as its two endpoints, there are two edges $\e$ and $\e^{op}$ satisfying 
$$
s(\e) = t(\e^{op}) = \alpha \text{ and } t(\e) = s(\e^{op}) = \beta . 
$$
For each $e \in E(\Gamma)$ which is a loop at a vertex, $\gamma$, there is one loop (which we will denote as $\e$) in $E(\vec{\Gamma})$ at $\gamma$.

\end{enumerate}
\end{defn}

Notice that there is an involution $\e \mapsto \e^{op}$ on $\vec{E}$ provided that $\e = \e^{op}$ whenever $\e$ is a loop.  Let $X(\vec{\Gamma})$ be the complex vector space with basis $\vec{E}$, and $C_{fin}(V)$ be the finitely supported complex-valued functions on $V$.  The algebra $C_{fin}(V)$ is spanned by the elements $p_{\alpha}$ for $\alpha \in \Gamma$ satisfying $p_{\alpha}(\beta) = \delta_{\alpha, \beta}$. 

There is a natural $C_{fin}(V)$ bimodule structure on $X(\Gamma)$ which is given by
$$
p_{\alpha}\e = \delta_{s(\e), \alpha}p_{\alpha} \text{ and } \e p_{\alpha} = \delta_{t(\e), \alpha}p_{\alpha}.
$$ 
With these actions in mind, $X(\Gamma)$ exhibits a $C_{fin}(V)$-valued inner product $\langle \cdot | \cdot \rangle$ which is given by 
$$
\langle \e'|\e \rangle = \delta_{\e, \e'}p_{t(\e)}
$$
with linearity in the \emph{second} variable and conjugate-linearity in the first.  Let $\cC$ denote $C_{0}(V)$.  Using this inner product, $X(\vec{\Gamma})$ completes to a $\cC-\cC$ bimodule $\cX(\vec{\Gamma})$ with inner product $\langle \cdot|\cdot\rangle_{\cC}$.  We will denote this Hilbert bimodule as $\cX$ when the context is clear.  

We now form the \underline{full Fock space} of $\vec{\Gamma}$, $\cF(\vec{\Gamma})$ which is given by
$$
\cF(\cX) = \cC \oplus \bigoplus_{n \geq 1} \cX^{\bigotimes^{n}_{\cC}}
$$
where $\otimes_{\cC}$ denotes the internal tensor product of $\cC-\cC$ Hilbert bimodules.  $\cC$, together with elements of the form $\e_{1} \otimes \e_{2} \otimes \cdots \otimes \e_{n}$ span a dense subset of $\cF(\cX)$.  Note that the elementary tensor $\e_{1} \otimes \e_{2} \otimes \cdots \otimes \e_{n}$ is zero unless the edges form a path in $\vec{\Gamma}$ i.e. $t(\e_{k}) = s(\e_{k+1})$ for $1 \leq k < n$.

We have the usual creation operator $\ell(\xi)$ for $\xi \in \cX$ which is defined by
$$
\ell(\xi)p_{\alpha} = \xi\cdot p_{\alpha} \text{ and } \ell(\xi)(\xi_{1} \otimes \cdots \otimes \xi_{n}) = \xi \otimes \xi_{1} \otimes \cdots \otimes \xi_{n}.
$$
The creation operator $\ell(\xi)$ is bounded and adjointable with adjoint $\ell(\xi)^{*}$ given by
$$
\ell(\xi)^{*}p_{\alpha} = 0 \text{ and } \ell(\xi)^{*}(\xi_{1} \otimes \xi_{2} \otimes \cdots \otimes \xi_{n}) = \langle \xi|\xi_{1}\rangle_{\cC} \xi_{2} \otimes \cdots \otimes \xi_{n}.
$$
We define the \underline{Pimsner-Topelitz algebra of $\vec{\Gamma}$}, $\cT(\vec{\Gamma})$ \cite{MR1426840} to be the C$^{*}$-algebra generated by $\cC$ and $\{\ell(\xi)| \xi \in \cX\}$.  

We will now construct our finite algebra associated to $\Gamma$ with weighting $\mu$.  
\begin{defn}
Let $\Gamma = (\Gamma, V, E, \mu)$ with $\vec{\Gamma}$ as in definition \ref{defn:direct}.  
\begin{enumerate}

\item If $e \in E$ is a loop in $\Gamma$, and $\e$ is the associated loop in $\vec{E}$, we define $X_{e} = \ell(\e) + \ell(\e)^{*}$

\item If $e \in E$ has $\alpha \neq \beta$ as its endpoints, let $\e$ and $\e^{op}$ be the two associated edges as in Definition \ref{defn:direct}, with $s(\e) = t(\e^{op}) = \alpha$ and $t(\e) = s(\e^{op}) = \beta$ .  Also set $a_{\e} = \sqrt[4]{\frac{\mu(\alpha)}{\mu(\beta)}}$ .  Then we define 
$$
X_{e} = a_{\e}\ell(\e) + a_{\e}^{-1}\ell^{*}(\e^{op}) + a_{\e}^{-1}\ell(\e^{op}) + a_{\e}\ell^{*}(\e).
$$

\end{enumerate}

We define the \underline{free graph algebra associated to $\Gamma, \mu$}, $\cS(\Gamma, \mu)$, to be the subalgebra of $\cT(\Gamma)$ generated by $\cC$ and the elements $(X_{e})_{e \in E}$ 

\begin{rem}\label{rem:opcirc}
 The elements $X_{e}$ are always self adjoint.  Note that if $e$ has $\alpha \neq \beta$ as endpoints, then 
$$
p_{\alpha}X_{e}p_{\beta} = a_{\e}\ell(\e) + a_{\e}^{-1}\ell(\e^{op})^{*} \text{ and } p_{\beta}X_{e}p_{\alpha} = a_{\e}^{-1}\ell(\e^{op}) + a_{\e}\ell(\e)^{*}
$$
We will set $X_{\e} = p_{\alpha}X_{e}p_{\beta}$ and $X_{\e^{op}} = p_{\beta}X_{e}p_{\alpha} $.
\end{rem}

As in \cite{MR1704661,1401.2485} there is a conditional expectation $E: S(\Gamma, \mu) \rightarrow \cC$ which is given by $E(x) = PxP = \sum_{\alpha \in V} \langle p_{\alpha}| xp_{\alpha}\rangle_{\cC}$ with $P: \cF(\cX) \rightarrow \cC$ the orthogonal projection.  We have the following theorem.

\begin{thm}[\cite{1401.2485}]\label{thm:freeexpectation}
\begin{enumerate}

\item The expectation $E$ is faithful on $\cS(\Gamma, \mu)$.  Moreover, if we let $\tr$ be the (semi)-finite trace on $\cC$ determined by $\tr(p_{v}) = \mu(v)$, then $\Tr = \tr\circ E$ is a (semi)-finite tracial weight on $\cS(\Gamma, \mu)$.

\item The elements $(X_{e})_{e \in E(\Gamma)}$ are free with amalgamation over $\cC$ with respect to $E$.

\item If $\e$ has $s(\e) = \alpha$ and $t(\e) = \beta$ and $X_{\e}$ is as in Remark \ref{rem:opcirc} then the law of $X_{\e}^{*}X_{\e}$ in $p_{\beta}\cS(\Gamma, \mu){\e}p_{\beta}$ with respect to $\Tr(p_{\beta} \cdot p_{\beta})$ is a Free Poisson with the following distribution

\begin{itemize}

\item $\mu(\beta)\frac{\sqrt{4a^{2}x - (a^{4} - 1 - a^{2}x)^{2}}}{2\pi x}{\bf 1}_{[a^{2} + a^{-2} - 2, a^{2} + a^{-2} + 2]}\, dx$ if $\mu(\beta) \leq \mu(\alpha)$ with $a = a_{\e}$.

\item $(\mu(\beta) - \mu(\alpha))\delta_{0} + \mu(\alpha)\frac{\sqrt{4a^{2}x - (a^{4} - 1 - a^{2}x)^{2}}}{2\pi x} {\bf 1}_{[a^{2} + a^{-2} - 2, a^{2} + a^{-2} + 2]}\, dx$ if $\mu(\beta) \geq \mu(\alpha)$
with $a = a_{\e}$.

\end{itemize} 

\end{enumerate}
\end{thm}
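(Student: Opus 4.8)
The plan is to deduce everything from the identification of $\cS(\Gamma,\mu)$ with the reduced amalgamated free product over $\cC$ of the algebras $\cB_e := C^*(\cC \cup \{X_e\}) \subseteq \cT(\vec\Gamma)$, $e \in E$. For this I would decompose $\cX = \bigoplus_{e\in E}\cX_e$, where $\cX_e$ is the closed $\cC$-sub-bimodule generated by the directed edge(s) over $e$ (a single edge $\e$ if $e$ is a loop, the pair $\e,\e^{op}$ otherwise), and use Voiculescu's identification of the full Fock bimodule of a direct sum with the free product, amalgamated over the common vacuum $\cC$, of the full Fock bimodules of the summands: $\cF(\cX) \cong \ast_{\cC}\cF(\cX_e)$. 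Under this isomorphism the creation operators $\ell(\e),\ell(\e^{op})$ act only on the $e$-th factor, so each $\cB_e$ lives on $\cF(\cX_e)$, the family $\{\cB_e\}$ is in free position over $\cC$ with respect to the vacuum expectation $E$, and $C^*(\cC\cup\{X_e : e\in E\}) = \ast_{\cC}\cB_e$. This is exactly statement (2); it is also the instance of Shlyakhtenko's operator-valued semicircular construction \cite{MR1704661} attached to the covariance encoded by $\Gamma$ and the scalars $a_\e$.

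Given this, statement (1) follows by reducing to a single edge. Since $\cS(\Gamma,\mu)$ is presented on the free-product Fock space, $E$ is the canonical reduced free-product expectation, which is faithful as soon as each $E|_{\cB_e}$ is — a standard fact about reduced amalgamated free products. Faithfulness of each $E|_{\cB_e}$ is a routine single-variable check: $\cB_e$ is the algebra generated over $\cC$ by a single $\cC$-valued semicircular element (for a non-loop edge, presented via $X_\e = p_\alpha X_e p_\beta = a_\e\ell(\e) + a_\e^{-1}\ell(\e^{op})^*$, cf. Remark \ref{rem:opcirc}), and its vacuum expectation is faithful. For the tracial-weight claim, the point is that the choice $a_\e = \sqrt[4]{\mu(\alpha)/\mu(\beta)}$ is precisely the balancing that makes the covariance $\tr$-symmetric, so that $\tr\circ E$ is a (semifinite) trace — this is the operator-valued free Gaussian functor being trace-preserving, \cite{MR1704661}; alternatively one verifies cyclic invariance of $\Tr(X_{\e_1}\cdots X_{\e_n})$ directly from the Fock inner product, using freeness to reduce to alternating words and the single-variable trace property on each $\cB_e$.

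For statement (3), fix a non-loop edge $e$ with $s(\e)=\alpha$, $t(\e)=\beta$, and write $a = a_\e$, $L=\ell(\e)$, $R=\ell(\e^{op})$; these are partial isometries with mutually orthogonal ranges, and $X_\e^*X_\e = a^2 L^*L + a^{-2}RR^* + L^*R^* + RL \in p_\beta\cS(\Gamma,\mu)p_\beta$. A short Fock-space computation gives $E(X_\e^*X_\e) = a^2 p_\beta$ and, more generally, lets one evaluate all moments $\Tr\big((X_\e^*X_\e)^n\big)$ by counting paths; equivalently, when $\mu(\alpha)=\mu(\beta)$ the element $X_\e$ is a $\cC$-valued circular element, so $X_\e^*X_\e$ is free Poisson (Marchenko–Pastur) and a general $a$ only rescales. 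Matching the moment sequence — or, cleanly, computing the $S$-transform, using that $\cC$ acts diagonally here — one finds that with respect to the normalized trace $\tfrac1{\mu(\beta)}\Tr(p_\beta\,\cdot\,p_\beta)$ the element $X_\e^*X_\e$ has the free Poisson law with rate $\mu(\alpha)/\mu(\beta)$ and jump $\sqrt{\mu(\beta)/\mu(\alpha)}$; this has continuous part supported on $[a^2+a^{-2}-2,\,a^2+a^{-2}+2]$, and when $\mu(\beta)>\mu(\alpha)$ the rate drops below $1$, producing an atom at $0$ of normalized mass $1-\mu(\alpha)/\mu(\beta)$. Un-normalizing by the factor $\mu(\beta)$ gives exactly the two displayed measures, the atom becoming $(\mu(\beta)-\mu(\alpha))\delta_0$.

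I expect the main obstacle to be the traciality part of (1) in the genuinely infinite case: there $\tr$ is only a semifinite weight and the $E|_{\cB_e}$ are not states, so the principle ``a free product of tracial weights is tracial'' must be handled in an amalgamated, semifinite setting — this is where the precise weighting in the definition of $X_e$ is indispensable and where I would invoke \cite{MR1704661} rather than reprove it. A secondary nuisance is the bookkeeping of the atom in (3): matching the trace of the kernel projection of $X_\e^*X_\e$ with $\mu(\beta)-\mu(\alpha)$ uniformly, which is most safely done by pushing the $S$-transform (or free-cumulant) computation through and reading the atom off the rate, rather than arguing geometrically with ranges of unequal ``$\cC$-dimension''.
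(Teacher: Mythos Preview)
The paper does not prove this theorem: it is stated with a citation to \cite{1401.2485} and no argument is given here, so there is no ``paper's own proof'' to compare against. Your outline --- decompose $\cX=\bigoplus_{e}\cX_{e}$, identify $\cF(\cX)$ with the reduced amalgamated free product over $\cC$ of the $\cF(\cX_{e})$ to obtain (2), deduce faithfulness of $E$ from faithfulness on each single-edge factor and traciality of $\tr\circ E$ from the symmetry of the covariance encoded by the choice $a_{\e}=(\mu(\alpha)/\mu(\beta))^{1/4}$ via \cite{MR1704661}, and identify the law in (3) as free Poisson with rate $\mu(\alpha)/\mu(\beta)$ --- is correct and is essentially the argument of the cited reference; the obstacles you flag (semifinite traciality, bookkeeping of the atom) are real but are precisely what \cite{MR1704661} and \cite{1401.2485} handle.
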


\end{defn}

\begin{rem}\label{rem:polar}
Notice that $\Tr(X_\e^{*}X_{\e}) = \sqrt{\mu(s(\e))\mu(t(\e))}$.  If $\mu(\alpha) \neq \mu(\beta)$, then $a_{\e}^{2} + a_{\e}^{-2} - 2 > 0$ so we conclude that the polar part of $X_{\e}$ is in $\cS(\Gamma, \mu)$.  Therefore, if $\mu(\alpha) < \mu(\beta)$ then $p_{\alpha}$ is equivalent to a subprojection of $p_{\beta}$ in $\cS(\Gamma, \mu)$.  It is important to note that the polar part of $X_{\e}$ will not be in $\cS(\Gamma, \mu)$ if $\mu(\alpha) = \mu(\beta)$.  This observation is consistent with Corollary \ref{cor:K-theory} below.
\end{rem}

We will also need the following lemma, to be used in Section \ref{sec:alg}.

\begin{lem}\label{lem:rec}
Suppose that $\e_{1}, \dots, \e_{n}$ is a loop in in $\vec{\Gamma}$ of length at least 3.  Then
\begin{align*}
\Tr(X_{\e_{1}}\cdots X_{\e_{n}}) &= \frac{1}{\sqrt{\mu(s(\e_{n}))\mu(t(\e_{n}))}}\sum_{\substack{\e_{j} = \e_{n}^{\op} \\ j \not\in \{1, n-1\}}} \Tr(X_{\e_{1}}\cdots X_{\e_{j-1}})\cdot \Tr(X_{\e_{j+1}}\cdots X_{\e_{n-1}})\\
&+ \delta_{\e_{n-1}, \e_{n}^{op}}\sqrt{\frac{\mu(s(\e_{n}))}{\mu(t(\e_{n}))}}\Tr(X_{\e_{1}}\cdots X_{\e_{n-2}}) + \delta_{\e_{1}, \e_{n}^{op}}\sqrt{\frac{\mu(t(\e_{n}))}{\mu(s(\e_{n}))}}\Tr(X_{\e_{2}}\cdots X_{\e_{n-1}})
\end{align*}

\end{lem}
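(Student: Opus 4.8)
The plan is to reduce the trace to a vacuum inner product on $\cF(\cX)$ and expand. Write $v := s(\e_1) = t(\e_n)$ for the base vertex of the loop. Since $X_{\e_1}\cdots X_{\e_n}$ lies in the corner $p_v\,\cS(\Gamma,\mu)\,p_v$, Theorem \ref{thm:freeexpectation} together with $E(\cdot)=P(\cdot)P$ gives
\[
\Tr(X_{\e_1}\cdots X_{\e_n}) = \big\langle p_v,\ X_{\e_1}\cdots X_{\e_n}\,p_v\big\rangle ,
\]
where $\langle\xi,\eta\rangle := \tr\big(\langle\xi|\eta\rangle_\cC\big)$ is the scalar inner product on $\cF(\cX)$, so $\langle p_v,p_v\rangle = \mu(v)$. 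By Remark \ref{rem:opcirc}, $X_{\e_k} = a_{\e_k}\ell(\e_k) + a_{\e_k}^{-1}\ell(\e_k^{op})^*$, and I will use freely that $a_{\e^{op}} = a_\e^{-1}$ and $a_\e^2 = \sqrt{\mu(s(\e))/\mu(t(\e))}$, together with the contraction $\ell(\xi)^*\ell(\eta) = \langle\xi|\eta\rangle_\cC$ and $\ell(\xi)^*p_\alpha = 0$. A direct computation on the vacuum yields $X_{\e_n}p_v = a_{\e_n}\,\e_n$, so the operator $X_{\e_n}$, which acts first, necessarily contributes a creation.

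Next I expand $X_{\e_1}\cdots X_{\e_n}$ into the $2^n$ words obtained by choosing, at each slot $k$, either the creation $a_{\e_k}\ell(\e_k)$ or the annihilation $a_{\e_k}^{-1}\ell(\e_k^{op})^*$, and isolate the $p_v$-component of each word applied to $p_v$. By the usual Fock-space bookkeeping (reading the word right-to-left, so $X_{\e_n}$ acts first), a word contributes nonzero exactly when its sequence of creations and annihilations is balanced and never negative, each annihilation removing the most recently created vector; this matches the slots into a non-crossing pair partition $\pi$ of $\{1,\dots,n\}$ in which the larger index $j$ of each block is a creation and the smaller index $k$ an annihilation, the contraction $\langle\e_k^{op}|\e_j\rangle_\cC$ forcing $\e_k = \e_j^{op}$ (the intermediate edges automatically form matching sub-paths because $\e_1,\dots,\e_n$ is a loop). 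Each block $\{k<j\}$ contributes the scalar $a_{\e_j}a_{\e_k}^{-1}=a_{\e_j}^2$, and there is a single overall factor $\mu(v)$ from the final return to the vacuum, so for any loop $\gamma_1,\dots,\gamma_m$ based at a vertex $u$,
\[
\Tr(X_{\gamma_1}\cdots X_{\gamma_m}) = \mu(u)\sum_{\pi}\ \prod_{\{k<j\}\in\pi}\delta_{\gamma_k,\gamma_j^{op}}\, a_{\gamma_j}^2 ,
\]
the sum over non-crossing pair partitions of $\{1,\dots,m\}$.

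Finally I reorganize this sum, for $\gamma_\bullet=\e_\bullet$, according to the block $\{j_0,n\}$ of $\pi$ containing $n$, where $j_0 < n$ and $\e_{j_0} = \e_n^{op}$. Non-crossingness forces $\pi$ to restrict to independent non-crossing pair partitions of $\{1,\dots,j_0-1\}$ and of $\{j_0+1,\dots,n-1\}$, and using $\e_{j_0}=\e_n^{op}$ one checks that $\e_1,\dots,\e_{j_0-1}$ is a loop at $v=t(\e_n)$ while $\e_{j_0+1},\dots,\e_{n-1}$ is a loop at $s(\e_n)$. When $1<j_0<n-1$ both are nonempty loops, so the two restricted sums are recognized, via the displayed formula, as $\Tr(X_{\e_1}\cdots X_{\e_{j_0-1}})/\mu(v)$ and $\Tr(X_{\e_{j_0+1}}\cdots X_{\e_{n-1}})/\mu(s(\e_n))$; multiplying by $\mu(v)\,a_{\e_n}^2$ and using $a_{\e_n}^2=\sqrt{\mu(s(\e_n))/\mu(t(\e_n))}$ with $v=t(\e_n)$ collapses the coefficient to $1/\sqrt{\mu(s(\e_n))\mu(t(\e_n))}$, which is the main term. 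The cases $j_0=n-1$ and $j_0=1$ must be peeled off separately, since the empty word has no well-defined trace in this semifinite, possibly non-unital setting: there the distinguished block is an adjacent annihilation-after-creation $\ell(\e_k^{op})^*\ell(\e_j)$ that contracts directly, and tracking the single surviving weight $a_{\e_n}^2$ against $\mu(v)=\mu(t(\e_n))$ produces exactly $\delta_{\e_{n-1},\e_n^{op}}\sqrt{\mu(s(\e_n))/\mu(t(\e_n))}\,\Tr(X_{\e_1}\cdots X_{\e_{n-2}})$ for $j_0=n-1$ and $\delta_{\e_1,\e_n^{op}}\sqrt{\mu(t(\e_n))/\mu(s(\e_n))}\,\Tr(X_{\e_2}\cdots X_{\e_{n-1}})$ for $j_0=1$. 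The hypothesis $n\ge 3$ makes $1$, $n-1$, $n$ distinct, so the three families of terms are disjoint and the words in the main term are nonempty; summing them is the claimed identity.

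The only real point of care — the main obstacle — is the weight bookkeeping: one must check that in the bulk the two sub-loop normalizations, the overall $\mu(v)$, and the factor $a_{\e_n}^2$ recombine into precisely $1/\sqrt{\mu(s(\e_n))\mu(t(\e_n))}$, while at the two ends a single uncancelled $\mu$-factor survives and produces the two different square roots $\sqrt{\mu(s(\e_n))/\mu(t(\e_n))}$ and $\sqrt{\mu(t(\e_n))/\mu(s(\e_n))}$. An alternative to the explicit pairing combinatorics is induction on $n$: peel off $X_{\e_n}p_v=a_{\e_n}\e_n$ and repeatedly commute the exposed annihilation operator rightward using $\ell(\xi)^*\ell(\eta)=\langle\xi|\eta\rangle_\cC$ and $\ell(\xi)^*p_\alpha=0$, which produces the same three groups of terms with the same weights.
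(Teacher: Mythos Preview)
Your proof is correct and follows essentially the same approach as the paper: both reduce to the vacuum inner product on $\cF(\cX)$, peel off $X_{\e_n}p_v=a_{\e_n}\e_n$, and split according to which slot $j$ with $\e_j=\e_n^{op}$ annihilates the created $\e_n$. The only cosmetic difference is that you pass through the full non-crossing pair partition moment formula before reorganizing by the block containing $n$, whereas the paper recognizes the two sub-sums directly as $E(X_{\e_1}\cdots X_{\e_{j-1}})$ and $E(X_{\e_{j+1}}\cdots X_{\e_{n-1}})$ via the conditional expectation, avoiding the intermediate combinatorial identity; the weight bookkeeping and the handling of the boundary cases $j=1$ and $j=n-1$ are identical in both arguments.
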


\begin{proof}
Noting that $\Tr = \tr \circ E$, we will first compute the $\cC$-valued inner product $\langle  \widehat{p_{t(\e_{n})}} | X_{\e_{1}}\cdots X_{\e_{n}}\widehat{p_{t(\e_{n})}} \rangle$.  To this end, we note that 
$$
X_{\e_{1}}\cdots X_{\e_{n}}\widehat{p_{t(\e_{n})}} =a_{\e_{n}}X_{\e_{1}}\cdots X_{\e_{n-1}} \e_{n}.
$$
Clearly for $X_{\e_{1}}\cdots X_{\e_{n-1}} \e_{n}$ to have a $\widehat{p_{t(\e_{n})}}$ component, it must be the case that at least one element in $\{\e_{1}, \dots, \e_{n-1}\}$ must be $\e_{n}^{\op}$.  By expanding each $X_{\e_{j}} = a_{\e_{j}}\ell(\e_{j}) + a_{\e_{j}}^{-1}\ell(\e_{j})^{*}$, we will count how many terms in $a_{\e_{n}}X_{\e_{1}}\cdots X_{\e_{n-1}} \e_{n}$ have a $\widehat{p_{t(\e_{n})}}$ component.

Assuming that there is at least one $j \in \{1, 2, \dots , n-1\}$ satisfying $\e_{j} = \e_{n}^{\op}$, we see that
\begin{align*}
\langle \widehat{p_{t(\e_{n})}} | a_{\e_{n}}X_{\e_{1}}\cdots X_{\e_{n-1}} \e_{n} \rangle &= \sum_{\e_{j} = \e_{n}^{\op}} \sum_{\substack{T_{k} \in \{a_{\e_{k}}\ell(\e_{k}), a_{\e_{k}}^{-1}\ell(\e_{k}^{\op})^{*}\} \\ T_{k}T_{k+1}\cdots T_{n-1}\e_{n} \not\in \C \widehat{p_{t(\e_{n})}}}} E(a_{\e_{n}}X_{\e_{1}}\cdots X_{\e_{j-1}})\langle \widehat{p_{t(\e_{n})}} | a_{\e_{n}}\ell(e_{j}^{\op})^{*}T_{j+1}\cdots T_{n-1}\e_{n}\rangle\\
&= \sum_{\e_{j} = \e_{n}^{\op}}  a_{\e_{n}}^{2} E(X_{\e_{1}}\cdots X_{\e_{j-1}})\ell(\e_{n})^{*}E(X_{j+1}\cdots X_{n-1})\e_{n}\\
&= \sum_{\substack{\e_{j} = \e_{n}^{\op} \\ j \not\in\{1, n-1\}}} a_{\e_{n}}^{2} \frac{\Tr(X_{\e_{1}}\cdots X_{\e_{j-1}})}{\Tr(p_{t(\e_{n})})} \cdot \frac{\Tr(X_{j+1}\cdots X_{n-1})}{\Tr(p_{s(\e_{n})})} \widehat{p_{t(\e_{n})}}\\
&+ \delta_{\e_{n-1}, \e_{n}^{\op}} a_{\e_{n}}^{2} \frac{\Tr(X_{\e_{1}}\cdots X_{\e_{n-2}})}{\Tr(p_{t(\e_{n})})}\widehat{p_{t(\e_{n})}} +  \delta_{\e_{1}, \e_{n}^{\op}} a_{\e_{n}}^{2} \frac{\Tr(X_{\e_{2}}\cdots X_{\e_{n-1}})}{\Tr(p_{s(\e_{n})})}\widehat{p_{t(\e_{n})}}. 
\end{align*} 
Taking the trace of both sides gives the desired formula.
\end{proof}

%%%%%%%%%%%%%%%%%%%%%%

\subsection{$KK$-groups}

We will begin by computing the $KK$-groups of $\cS(\Gamma, \mu)$.  This computation was done in \cite{1401.2485} but for completeness, we will present the argument here.  If we let $\cB$ be a separable C$^{*}$-algebra and $\cY$ a countably generated Hilbert $\cB-\cB$ bimodule, we form the Fock space of $\cY$,
$$
\cF(\cY) = \cB \oplus \bigoplus_{n\geq 1} \cY^{\bigotimes_{\cB}^{n}}.
$$
We define the Pimsner-Toeplitz algebra of $\cY$, $\cT(\cY)$ to be C$^{*}$-algebra generated by the operators $\cB$ and $\ell(\xi)$ for $\xi \in \cY$ where 
$$
\ell(\xi)b = \xi\cdot b \text{ and } \ell(\xi)(\xi_{1} \otimes \cdots \otimes \xi_{n}) = \xi \otimes \xi_{1} \otimes \cdots \otimes \xi_{n}.
$$
for all $b \in \cB$ and $\xi_{1}, \dots, \xi_{n} \in \cY$.  A result of Pimsner says the following.
\begin{thm}[\cite{MR1426840}]\label{thm:Pimsner}
Assume that $\cB$ acts on $\cY$ by compact operators.  Then the inclusion $\cB \hookrightarrow \cT(\cY)$ is a $KK$-equivalence.
\end{thm}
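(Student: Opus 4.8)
The plan is to exhibit the inclusion $\iota\colon\cB\hookrightarrow\cT(\cY)$ as inducing an invertible element in $KK(\cB,\cT(\cY))$ by constructing an explicit homotopy realizing $\cT(\cY)$ as an extension that splits at the level of $KK$. First I would recall the structure of the Toeplitz algebra: there is a canonical short exact sequence
\begin{equation*}
0 \longrightarrow \cK(\cF(\cY)) \longrightarrow \cT(\cY) \longrightarrow \cO(\cY) \longrightarrow 0,
\end{equation*}
but rather than go through the Cuntz--Pimsner quotient $\cO(\cY)$, the cleaner route (and the one in Pimsner's paper) is to use the ``Toeplitz extension'' directly. Because $\cB$ acts on $\cY$ by compact operators, $\cF(\cY)$ is a countably generated Hilbert $\cB$-module on which $\cB$ acts compactly through its action on $\cB\subset\cF(\cY)$ together with the shift structure; the point is that the ideal generated by the rank-one projection $P\colon\cF(\cY)\to\cB$ onto the $0$-th summand is all of $\cK(\cF(\cY))$, and $\cK(\cF(\cY))$ is $KK$-equivalent to $\cB$ via $b\mapsto bP$.

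The key steps, in order: (1) Show $P\cT(\cY)P = \cB P \cong \cB$ and that $\cK(\cF(\cY))$ is a hereditary subalgebra of $\cT(\cY)$ containing a full copy of $\cB$ (fullness uses that $\cB$ acts by compacts so that finite-rank operators built from $\ell(\xi)$'s generate everything); conclude $\cB \sim_{KK} \cK(\cF(\cY))$, with the equivalence implemented by the Morita-type bimodule $\cF(\cY)$. (2) Prove the inclusion $\cK(\cF(\cY))\hookrightarrow\cT(\cY)$ is a $KK$-equivalence: this is the heart of the matter. One constructs a $KK^1$-class (or equivalently a Kasparov module) realizing the inverse, using the fact that $\cT(\cY)$ is homotopy equivalent to $\cK(\cF(\cY))$ via the ``rotation/Fock scaling'' trick — deform the creation operators $\ell(\xi)\mapsto t\,\ell(\xi)$ for $t\in[0,1]$, which at $t=0$ collapses $\cT(\cY)$ onto its scalar part. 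More precisely, one builds an equivariant asymptotic morphism, or uses the standard mapping-cone argument: the inclusion $\cB\hookrightarrow\cT(\cY)$ together with the completely positive splitting $\cO(\cY)\to\cT(\cY)$ of the quotient gives that the boundary map in the six-term sequence is an isomorphism, forcing $K_*(\cB)\xrightarrow{\sim}K_*(\cT(\cY))$, and then one upgrades this to a $KK$-equivalence by checking the relevant classes compose to identities. (3) Compose the two equivalences from (1) and (2) and check the composite is exactly $[\iota]$, by tracking the rank-one projection $P$.

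The main obstacle is step~(2): producing the honest \emph{inverse} in $KK$ (not merely a $K$-theory isomorphism) requires either Pimsner's explicit Kasparov bimodule or a careful homotopy argument, and this is where the compactness hypothesis on the $\cB$-action is essential — without it $\cF(\cY)$ need not be a genuine Morita context and the collapsing homotopy fails to stay inside the algebra. Since this is precisely the content of Pimsner's theorem cited as \cite{MR1426840}, in the paper itself I would simply invoke it; the sketch above is how one would reconstruct the proof if needed. I would not reprove it in full here, as the excerpt already states it as a quoted theorem and the subsequent $KK$-computation for $\cS(\Gamma,\mu)$ only needs the statement.
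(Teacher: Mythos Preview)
Your final disposition matches the paper exactly: the paper does not prove this theorem at all but simply cites it as a result of Pimsner, and you correctly recognize this and propose to do the same. There is nothing further to compare, since the paper offers no argument of its own here; your sketch of how one might reconstruct Pimsner's proof is not needed (and, for what it's worth, Pimsner's actual route is more direct---he writes down an explicit Kasparov $\cT(\cY)$--$\cB$ bimodule on $\cF(\cY)$ rather than passing through a scaling homotopy, the latter being closer in spirit to the Germain argument cited next as Theorem~\ref{thm:Germain}).
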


In \cite{Germain}, one chooses a closed real subspace $\cY_{\R}$ of $\cY$ with the property that $\overline{\cB\cY_{\R}\cB} = \cY$, and defines the algebra $\cS(\cY_{\R})$to be the C$^{*}$-algebra generated by $\cB$ and $\{\ell(\xi) + \ell(\xi)^{*}| \xi \in \cY_{\R}\}$.  By observing that the homotopy in \cite{MR1426840} leaves the subspace $\cF(\cY) \otimes_{\cB}\cS(\cY_{\R})$ of $\cF(\cY) \otimes_{\cB}\cT(\cY)$ invariant, the following theorem was proven.

\begin{thm}[\cite{Germain}]\label{thm:Germain}
Assume that $\cB$ acts on $\cY$ by compact operators.  The inclusions $\cB \hookrightarrow \cS(\cY_{\R}) \hookrightarrow \cT(\cY)$ are $KK$-equivalences.
\end{thm}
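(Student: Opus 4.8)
\emph{Proof strategy.}
The plan is to reduce to Pimsner's Theorem~\ref{thm:Pimsner} by using the factorization of the canonical inclusion $\cB\hookrightarrow\cT(\cY)$ through $\cS(\cY_{\R})$. Write $\iota\colon\cB\hookrightarrow\cS(\cY_{\R})$ and $j\colon\cS(\cY_{\R})\hookrightarrow\cT(\cY)$, so that $[j\circ\iota]=[\iota]\otimes_{\cS(\cY_{\R})}[j]$ in $KK$ and $[j\circ\iota]$ is invertible by Theorem~\ref{thm:Pimsner}. Once we also know that $[\iota]$ is invertible, associativity of the Kasparov product gives $[j]=[\iota]^{-1}\otimes_{\cB}[j\circ\iota]$, a product of invertible classes, hence invertible; so both inclusions are $KK$-equivalences. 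Everything therefore reduces to proving that $\iota\colon\cB\hookrightarrow\cS(\cY_{\R})$ is a $KK$-equivalence.

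I would establish this by revisiting Pimsner's proof of Theorem~\ref{thm:Pimsner} and checking that each ingredient restricts to the subalgebra. Recall that Pimsner's argument produces a class $\beta\in KK(\cT(\cY),\cB)$, built from the Fock module $\cF(\cY)$ (with $\cT(\cY)$ acting through its Fock representation) together with an operator assembled from the creation operators, and shows that $\beta$ is a two-sided inverse of $\alpha:=[\cB\hookrightarrow\cT(\cY)]$; the relation $\beta\otimes_{\cB}\alpha=1_{\cT(\cY)}$ in particular is obtained from an explicit norm-continuous homotopy of Kasparov $(\cT(\cY),\cT(\cY))$-modules living on $\cF(\cY)\otimes_{\cB}\cT(\cY)$, with $\cT(\cY)$ acting on the left tensor factor. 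The assumption that $\cB$ acts on $\cY$ by compact operators is used precisely to make the operators involved Fredholm, i.e.\ to guarantee that the relevant self-commutators and ``$F^{2}-1$'' terms are compact. The key observation, already flagged in the statement, is that the module $\cF(\cY)$, the operators, and the whole homotopy are assembled out of $\cB$, the creation operators $\ell(\xi)$, their adjoints $\ell(\xi)^{*}$, and operators diagonal in the Fock grading (such as the vacuum projection $P_{0}$), all of which, acting on the left tensor factor, carry the submodule $\cF(\cY)\otimes_{\cB}\cS(\cY_{\R})$ into itself; moreover the left action of $\cS(\cY_{\R})\subseteq\cT(\cY)$ preserves this submodule, because $\cS(\cY_{\R})$ is generated by $\cB$ and the self-adjoint combinations $\ell(\xi)+\ell(\xi)^{*}$ with $\xi\in\cY_{\R}$. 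Restricting all of Pimsner's data to $\cF(\cY)\otimes_{\cB}\cS(\cY_{\R})$ then yields a class $\beta'\in KK(\cS(\cY_{\R}),\cB)$ together with the homotopies establishing $[\iota]\otimes_{\cS(\cY_{\R})}\beta'=1_{\cB}$ and $\beta'\otimes_{\cB}[\iota]=1_{\cS(\cY_{\R})}$; the requirement $\overline{\cB\cY_{\R}\cB}=\cY$ built into the definition of $\cY_{\R}$ is exactly what makes this submodule large enough --- nondegenerate left $\cS(\cY_{\R})$-action, fullness over $\cS(\cY_{\R})$ --- for the endpoint identifications to go through verbatim. Hence $\iota$ is a $KK$-equivalence, and combined with the reduction above this proves the theorem.

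The real work, I expect, is entirely in this last verification: one must check that the operators that are adjointable on the $\cT(\cY)$-level modules remain adjointable (not merely densely defined) after restriction; that ``$F^{2}-1$ and $[F,a]$ compact over $\cT(\cY)$'' implies the same over $\cS(\cY_{\R})$, which holds because $\cK\big(\cF(\cY)\otimes_{\cB}\cS(\cY_{\R})\big)$ sits inside $\cK\big(\cF(\cY)\otimes_{\cB}\cT(\cY)\big)$ compatibly with the operators at hand; and that the homotopies stay norm-continuous after restriction. None of this is deep, but it is precisely the content of the remark that ``the homotopy in~\cite{MR1426840} leaves $\cF(\cY)\otimes_{\cB}\cS(\cY_{\R})$ invariant,'' and it is where essentially all of the care must go; once it is in place, the $KK$-equivalence of both inclusions follows formally.
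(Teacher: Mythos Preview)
Your proposal is correct and matches the paper's approach exactly: the paper does not give a proof of this cited result, but the single sentence it offers as justification---that Pimsner's homotopy leaves the submodule $\cF(\cY)\otimes_{\cB}\cS(\cY_{\R})$ of $\cF(\cY)\otimes_{\cB}\cT(\cY)$ invariant---is precisely the observation you have unpacked in detail. Your reduction (show $\iota$ is a $KK$-equivalence by restricting Pimsner's Kasparov module and homotopy, then deduce invertibility of $j$ from $j\circ\iota$) is the intended argument.
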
 

  We will let $\cX_{\R, \mu}$ be the closure of the real subspace spanned by the elements $a_{\e}\e + a_{\e}^{-1}\e^{op}$.  We observe that $\cS(\Gamma, \mu)$ is generated by $\cC$ and the elements $\ell(\xi) + \ell(\xi)^{*}$ for $\xi \in \cX_{\R, \mu}$.  Using this realization, Theorem \ref{thm:Germain} gives the following corollary.

\begin{cor}\label{cor:K-theory}
Since $\Gamma$ is locally finite, $\cC$ acts on $\cX$ by compact operators.  Therefore, the inclusions $\cC \hookrightarrow \cS(\Gamma, \mu) \hookrightarrow \cT(\vec{\Gamma})$ are $KK$-equivalences.  As a consequence,
$$
K_{0}(\cS(\Gamma, \mu)) = \Z\{[p_{\alpha}] | \alpha \in V\} \text{ and } K_{1}(\cS(\Gamma, \mu)) = \{0\}
$$
where the first group is the free abelian group on the vertices of $\Gamma$.
\end{cor}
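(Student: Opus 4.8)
The plan is to verify the compactness hypothesis of Theorem~\ref{thm:Germain} and then read off the $K$-theory from that of $\cC = C_0(V)$.

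First I would check that the left action $\phi\colon \cC \to \mathcal{L}(\cX)$ takes values in the compact operators $\mathcal{K}(\cX)$ on the Hilbert bimodule $\cX$. For a vertex $\alpha$, unwinding the $\cC$-valued inner product $\langle \e' | \e\rangle_{\cC} = \delta_{\e,\e'}p_{t(\e)}$ shows that the rank-one operator $\theta_{\e,\e}\colon \zeta \mapsto \e\cdot\langle \e | \zeta\rangle_{\cC}$ acts on basis vectors by $\theta_{\e,\e}(\e') = \delta_{\e,\e'}\e'$, so $\sum_{\e\colon s(\e)=\alpha}\theta_{\e,\e}$ implements the left multiplication by $p_\alpha$. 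Since $\Gamma$ is locally finite this sum is finite, hence $\phi(p_\alpha) \in \mathcal{K}(\cX)$; and since $\mathcal{K}(\cX)$ is norm-closed and the $p_\alpha$ span a dense $*$-subalgebra of $C_0(V)$, we conclude $\phi(\cC)\subseteq\mathcal{K}(\cX)$. This is exactly the point where local finiteness is needed: without it $\phi(p_\alpha)$ would be an infinite sum of mutually orthogonal nonzero projections in $\mathcal{L}(\cX)$ and would fail to be compact.

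Next I would invoke Theorem~\ref{thm:Germain} with $\cB=\cC$, $\cY=\cX$, and $\cY_\R=\cX_{\R,\mu}$. One checks $\overline{\cC\,\cX_{\R,\mu}\,\cC}=\cX$ by noting that $p_{s(\e)}(a_\e\e+a_\e^{-1}\e^{op})p_{t(\e)} = a_\e\e$ recovers each directed edge (and similarly the generator for each loop), and that, as already observed in the text, $\cS(\Gamma,\mu)$ is generated by $\cC$ together with $\{\ell(\xi)+\ell(\xi)^*\mid \xi\in\cX_{\R,\mu}\}$, i.e.\ $\cS(\Gamma,\mu)$ coincides with $\cS(\cX_{\R,\mu})$ in the notation of that theorem. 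Theorem~\ref{thm:Germain} then yields that $\cC\hookrightarrow\cS(\Gamma,\mu)\hookrightarrow\cT(\vec{\Gamma})$ are $KK$-equivalences (the outer equivalence $\cC\hookrightarrow\cT(\vec{\Gamma})$ being Theorem~\ref{thm:Pimsner}).

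Finally, a $KK$-equivalence induces isomorphisms on $K$-theory, so $K_*(\cS(\Gamma,\mu))\cong K_*(\cC)=K_*(C_0(V))$; since $V$ is countable and discrete, $C_0(V)\cong c_0(V)=\bigoplus_{\alpha}\C$, whence $K_0\cong\bigoplus_{\alpha\in V}\Z$ is free abelian on the classes $[p_\alpha]$ (using that $K_0$ commutes with $c_0$-direct sums) and $K_1=0$. Because the equivalence is implemented by the inclusion $\cC\hookrightarrow\cS(\Gamma,\mu)$, the induced map on $K_0$ sends $[p_\alpha]\in K_0(\cC)$ to $[p_\alpha]\in K_0(\cS(\Gamma,\mu))$, so $K_0(\cS(\Gamma,\mu))$ is the free abelian group on $\{[p_\alpha]\mid\alpha\in V\}$, as claimed. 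The only content-bearing step is the compactness verification in the first paragraph; everything afterward is a formal application of the cited theorems together with the standard computation of $K_*(C_0(V))$.
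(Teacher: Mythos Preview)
Your proof is correct and follows exactly the approach the paper intends: the paper states the corollary without a written proof, deriving it directly from Theorem~\ref{thm:Germain} after noting that $\cS(\Gamma,\mu)$ is generated by $\cC$ and $\{\ell(\xi)+\ell(\xi)^*\mid\xi\in\cX_{\R,\mu}\}$. You have simply supplied the routine verifications (compactness of the left action via local finiteness, the check that $\overline{\cC\,\cX_{\R,\mu}\,\cC}=\cX$, and the computation of $K_*(C_0(V))$) that the paper leaves implicit.
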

In particular, this shows that the $KK$-groups are independent of the weighting $\mu$.  The positive cone $K_{0}^{+}(\cS(\Gamma, \mu))$ will depend on $\mu$ however.  See Subsection $\ref{sec:ideal}$ below.

%%%%%%%%%%%%%%%%%%%%%%%%

\subsection{Ideal Structure}\label{sec:ideal}

In this section we fix a finite, connected graph, $(\Gamma, \mu)$ with at least two undirected edges.  We will give an explicit description of the closed ideals of $\cS(\Gamma, \mu)$.  To begin,  we will define two sets $V_{>}(\Gamma, \mu)$ and $V_{=}(\Gamma, \mu)$ as follows
$$
V_{>}(\Gamma, \mu) = \set{\alpha \in V}{\mu(\alpha) > \sum_{\beta \sim \alpha} n_{\alpha, \beta} \mu(\beta)} \text{ and } V_{=}(\Gamma, \mu) = \set{\alpha \in V}{\mu(\alpha) = \sum_{\beta \sim \alpha} n_{\alpha, \beta} \mu(\beta)} 
$$  
where we write $\beta \sim \alpha$ to indicate that $\beta$ and $\alpha$ are the endpoints of at least one edge of $\Gamma$, and we let $n_{\alpha, \beta}$ be the number of unoriented edges that have $\alpha$ and $\beta$ as endpoints.  When $\Gamma$ and $\mu$ are understood, we will write $V_{>}$ instead of $V_{>}(\Gamma, \mu)$, and $V_{=}$ for $V_{=}(\Gamma, \mu)$.   We will also write $V_{\geq}$ to denote the following set
$$
V_{\geq} = V_{=} \cup V_{>}
$$

We will show that there is an aesthetically pleasing description of the ideal structure of $\cS(\Gamma, \mu)$ in terms of these vertex sets.  To begin, we will state some results on the simplicity and stable rank of certain reduced (amalgamated) free products.

\begin{nota}\label{nota:free}
\mbox{}

\begin{enumerate}
\item Let $A_{1}$ and $A_{2}$ be two unital C$^{*}$-algebras with faithful tracial states $\tr_{1}$ and $\tr_{2}$ respectively.  We will write $A_{1} * A_{2}$ as the \emph{reduced} free product with respect to the tracial states $\tr_{1}$ and $\tr_{2}$. 

\item More generally, if $A_{1}$ and $A_{2}$ are two C$^{*}$-algebras with faithful tracial states $\tr_{1}$ and $\tr_{2}$ respectively and with trace preserving conditional expectations $E_{i}:  A_{i} \rightarrow D$ onto a common subalgebra $D$, then $A_{1} \underset{D}{*} A_{2}$ will denote as the \emph{reduced} amalgamated free product with respect to the expectations $E_{1}$ and $E_{2}$.

\item If $A$ is a unital C$^{*}$ algebra with a finite faithful (not necessarily normalized) trace $\tr$, then we write
$$
A = \underset{\mu_{1}}{\overset{p_{1}}{B_{1}}} \oplus \cdots \oplus \underset{\mu_{n}}{\overset{p_{n}}{B_{n}}}
$$
If $A = B_{1} \oplus \cdots \oplus B_{n}$ for unital C$^*$-algebras $B_{1}, \dots, B_{n}$ with identities $p_{1}, \dots, p_{n}$ respectively.  We have $\tr(p_{k}) = \mu_{k}$ for each $k$.
\end{enumerate}
\end{nota}

With this notation in hand, we will now state several free product results that will be used in our analysis of $\cS(\Gamma, \mu)$.

\begin{lem}[\cite{MR1473439}]\label{lem:sumproj}
Let $A = A_{1} * A_{2}$ and let $p \in A_{1}$, $q \in A_{2}$ be projections.
\be

\item If $\tr(p + q) > 1$, then $p + q$ is invertible.

\item If $\tr(p + q) < 1$, then $\{0\}$ is an isolated point in the spectrum of $p + q$ and if $\mu$ is the law of $p + q$ according to $\tr$, then $\mu\{0\} = 1 - \tr(p + q)$.

\item If $\tr(p + q) = 1$, the support projection of $p + q$ is 1; however, $p + q$ is not invertible, and $\{0\}$ is not isolated in the spectrum of $p + q$.

\ee
\end{lem}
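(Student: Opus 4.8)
The plan is to reduce all three assertions to a description of the spectral distribution of $p+q$ with respect to $\tr=\tr_{1}*\tr_{2}$. Since $p$ and $q$ are free, the $*$-distribution of the pair $(p,q)$, hence the distribution $\mu$ of $p+q$, depends only on $a:=\tr(p)$ and $b:=\tr(q)$. I will use the elementary fact that, for a self-adjoint element $x$ of a unital C$^{*}$-algebra carrying a faithful trace $\tr$, the spectrum $\sigma(x)$ equals the support of the distribution $\mu_{x}$ of $x$: one containment is automatic, and if $t\in\sigma(x)$ then $f(x)\neq 0$ for every bump function $f$ around $t$, so $\tr(f(x)^{2})>0$ by faithfulness and $t\in\supp(\mu_{x})$. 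Thus it suffices to pin down $\mu$, especially near $0$ and near $2$.

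First I would compute the atoms. Since $p,q\geq 0$, the spectral projection of $p+q$ at $\{0\}$ (taken in the finite von Neumann algebra generated by $A$ in the GNS representation of $\tr$) is $(1-p)\wedge(1-q)$, so $\mu\{0\}=\tr\big((1-p)\wedge(1-q)\big)$; Voiculescu's general-position formula for free projections gives $\tr\big((1-p)\wedge(1-q)\big)=\max(1-a-b,0)$, and symmetrically $\mu\{2\}=\tr(p\wedge q)=\max(a+b-1,0)$. In particular $\mu\{0\}=1-\tr(p+q)$ when $\tr(p+q)<1$, which is the atom claim in (2), and the support projection of $p+q$ is $1$ whenever $\tr(p+q)\geq 1$, which is part of (3).

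The remaining, absolutely continuous, mass of $\mu$ I would locate via the corner element $pqp$, whose distribution is the free-Poisson-type law supported on $[\rho_{-},\rho_{+}]$ (together with atoms at $0$ and $1$), where $\rho_{\pm}=a+b-2ab\pm 2\sqrt{ab(1-a)(1-b)}$; then, passing to the standard "two free projections'' matrix model — in which, away from the four corner projections $p\wedge q$, $p\wedge(1-q)$, $(1-p)\wedge q$, $(1-p)\wedge(1-q)$, the pair $(p,q)$ is a constant rank-one projection together with a path of rotations whose angle $\theta$ satisfies $\cos^{2}\theta\sim pqp$ — a one-line $2\times 2$ eigenvalue computation shows the absolutely continuous part of $\mu$ is carried by $\{1\pm\sqrt{t}:t\in[\rho_{-},\rho_{+}]\}$, whose infimum is $1-\sqrt{\rho_{+}}$. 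The only inequality actually needed is $\rho_{+}\leq 1$, with equality iff $a+b=1$: this is just $\big((1-a)(1-b)-ab\big)^{2}\geq 0$, since it rearranges to $2\sqrt{ab(1-a)(1-b)}\leq (1-a)(1-b)+ab=1-(a+b)+2ab$, i.e. $\rho_{+}\leq 1$, with equality iff $(1-a)(1-b)=ab$, i.e. $a+b=1$.

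Putting these together finishes the proof: if $\tr(p+q)=a+b<1$ then $\rho_{+}<1$, so the a.c. part of $\mu$ lies in $[\,1-\sqrt{\rho_{+}},\,1+\sqrt{\rho_{+}}\,]$ with $1-\sqrt{\rho_{+}}>0$ while the only atom below it is $\mu\{0\}=1-a-b>0$, hence $0$ is isolated in $\sigma(p+q)$ — giving (2); if $a+b=1$ then $\rho_{+}=1$, so the a.c. part reaches down to $0$ and $\mu\{0\}=0$, hence $0\in\sigma(p+q)$ is not isolated and $p+q$ is not invertible although its support projection is $1$ — giving (3); and if $\tr(p+q)>1$ then again $\rho_{+}<1$ and $\mu\{0\}=0$, so $\inf\sigma(p+q)=1-\sqrt{\rho_{+}}>0$ and $p+q$ is invertible — giving (1). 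The main obstacle I anticipate is the bookkeeping in the third paragraph: justifying the two-free-projections model and tracking the precise supports and atom masses of $\mu$ across all regimes (these are governed by $\min(a,b,1-a,1-b)$), and checking that the a.c. mass together with the masses at $0,1,2$ sum to $1$ so that nothing is hiding in $\sigma(p+q)$ near $0$. Everything else is either standard (Voiculescu's general-position identity, the free-Poisson law of $pqp$, or Dykema's computation of $\C^{2}*\C^{2}$) or the displayed inequality.
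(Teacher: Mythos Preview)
The paper does not prove this lemma; it is quoted from \cite{MR1473439} without argument, so there is nothing in the paper to compare your proof against. Your approach---computing the full spectral distribution of $p+q$ via the Voiculescu two-free-projections model and then reading off the three assertions from the support of $\mu$ and the size of the atom at $0$---is the standard route and is correct. The decisive inequality is exactly the one you isolate: $\rho_{+}\le 1$ with equality iff $a+b=1$, which is AM--GM applied to $ab$ and $(1-a)(1-b)$. The bookkeeping you flag (the atoms at $1$ of total mass $|a-b|$ coming from $p\wedge(1-q)$ and $(1-p)\wedge q$, and the degenerate cases $a\in\{0,1\}$ or $b\in\{0,1\}$) is routine and does not affect the behaviour near $0$, so there is no hidden obstacle.
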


A positivity argument and induction lead to the following corollary
\begin{cor}\label{cor:sumproj}
Let $n \geq 2$, $A = A_{1} * A_{2} * \cdots * A_{n}$ and $p_{i} \in A_{i}$ be a projection for each $i$.
\be

\item If $\sum_{i=1}^{n}\tr(p_{i}) > 1$, then $\sum_{i=1}^{n} p_{n}$ is invertible.

\item If $\sum_{i=1}^{n}\tr(p_{i}) < 1$ then $\{0\}$ is an isolated point in the spectrum of $\sum_{i=1}^{n} p_{n}$ and if $\mu$ is the law of $p + q$ according to $\tr$, then $\mu\{0\} = 1 - \tr(p + q)$.  

\item If $\sum_{i=1}^{n}\tr(p_{i}) = 1$, the support projection of $\sum_{i=1}^{n} p_{n}$ is 1; however, $\sum_{i=1}^{n} p_{n}$ is not invertible, and $\{0\}$ is not isolated in the spectrum of $p + q$

\ee

\end{cor}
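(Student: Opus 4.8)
The plan is to prove parts (1)--(3) \emph{simultaneously by induction on $n$}, the base case $n=2$ being exactly Lemma \ref{lem:sumproj} (the single‑projection case $n=1$, which occurs only for parts (1),(2), is immediate). First I would pass to the GNS representation of the free‑product trace: write $M$ for the von Neumann algebra generated by $A_{1}\ast\cdots\ast A_{n}$ on $L^{2}(M,\tr)$, so that the $M$‑completions of the $A_{i}$ are free in $(M,\tr)$ and, since $A_{1}\ast\cdots\ast A_{n}$ sits unitally in $M$, invertibility of $\sum_{i}p_{i}$, isolation of $0$ in its spectrum, and the mass of an atom at $0$ may all be read off in $M$. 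I will use two elementary facts: (a) for $a\ge 0$ with support projection $e$ one has $a\le\|a\|\,e$, and if moreover $0$ is isolated in $\sigma(a)$ then $a\ge\eta e$ for some $\eta>0$; (b) for $a,b\ge 0$ on $L^{2}$, $\ker(a+b)=\ker a\cap\ker b$, so the support projection of $a+b$ depends only on those of $a$ and $b$. Throughout, the inductive step peels one projection off, say $r=p_{1}+\cdots+p_{n-1}$ with $S'=\sum_{i<n}\tr(p_{i})$, and $e$ denotes the support projection of $r$, which lies in the algebra generated by $p_{1},\dots,p_{n-1}$ and hence is free from $p_{n}$.

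The heart of the matter is a \emph{positivity lemma}: if $r\ge 0$ in $(M,\tr)$ has support projection $1$ and $q$ is a projection free from $r$ with $\tr(q)>0$, then $r+q$ is invertible. To see this, put $e_{\delta}=\mathbf 1_{[0,\delta)}(r)$; as the distribution of $r$ has no atom at $0$, $\tr(e_{\delta})\downarrow 0$, so I may fix $\delta$ with $\tr(1-e_{\delta})+\tr(q)>1$. Since $e_{\delta}$ is a function of $r$ it is free from $q$, so Lemma \ref{lem:sumproj}(1) makes $(1-e_{\delta})+q$ invertible; equivalently $2\notin\sigma\!\big(e_{\delta}+(1-q)\big)$, equivalently $\|e_{\delta}(1-q)e_{\delta}\|=1-c$ for some $c>0$, whence $e_{\delta}qe_{\delta}=e_{\delta}-e_{\delta}(1-q)e_{\delta}\ge c\,e_{\delta}$. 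If $0\in\sigma(r+q)$, pick unit $\xi_{m}\in L^{2}$ with $\langle (r+q)\xi_{m},\xi_{m}\rangle\to 0$, so $\langle r\xi_{m},\xi_{m}\rangle\to 0$ and $\langle q\xi_{m},\xi_{m}\rangle\to 0$; the first forces $\|(1-e_{\delta})\xi_{m}\|\to 0$, and then $\langle q\xi_{m},\xi_{m}\rangle=\langle e_{\delta}qe_{\delta}\xi_{m},\xi_{m}\rangle+o(1)\ge c+o(1)$, a contradiction. (The only nonroutine inputs here are the elementary identity $\|efe\|=1\iff 2\in\sigma(e+f)$ for projections and $2-(e+f)=(1-e)+(1-f)$, plus Lemma \ref{lem:sumproj}(1).)

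With this in hand the induction runs cleanly. \textbf{Part (1)} ($S:=\sum_{i}\tr(p_{i})>1$): if $S'>1$ then $r$ is invertible by the inductive (1) and $\sum p_{i}\ge r$; if $S'<1$ then the inductive (2) gives $\tr(e)=S'$ and $r\ge\eta e$, while $e$ is free from $p_{n}$ with $\tr(e)+\tr(p_{n})=S>1$, so $e+p_{n}$ is invertible by Lemma \ref{lem:sumproj}(1) and $\sum p_{i}\ge\eta e+p_{n}\ge\eta(e+p_{n})>0$; if $S'=1$ then the inductive (3) says $r$ has support $1$ with $0$ not isolated, and the positivity lemma with $q=p_{n}$ (note $\tr(p_{n})=S-1>0$) gives invertibility. \textbf{Part (2)} ($S<1$): inductive (2) gives $0$ isolated in $\sigma(r)$, $\tr(e)=S'$, $r\ge\eta e$; Lemma \ref{lem:sumproj}(2) for the free projections $e,p_{n}$ makes $0$ isolated in $\sigma(e+p_{n})$ with atom‑mass $1-S$, so by (b) the support of $\sum p_{i}$ equals that of $e+p_{n}$, giving atom‑mass $1-S$ at $0$, while $\sum p_{i}\ge\eta e+p_{n}\ge\eta(e+p_{n})$ and $e+p_{n}\ge\zeta\cdot(\text{its support})$ show $0$ is isolated. \textbf{Part (3)} ($S=1$): identical, using Lemma \ref{lem:sumproj}(3) for $e,p_{n}$, which gives support of $e+p_{n}$ (hence of $\sum p_{i}$) equal to $1$; non‑invertibility of $\sum p_{i}$ follows from $\sum p_{i}=r+p_{n}\le\|r\|e+p_{n}$ and non‑invertibility of $e+p_{n}$; and $0$ is not isolated since a positive element with support $1$ and $0$ isolated is invertible. (The ``law of $p+q$'' in the statement of (2) and (3) should of course read ``law of $\sum_{i}p_{i}$''.)

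The only genuinely delicate case is $S'=1$ inside part (1) --- equivalently, the configuration in which every $\tr(p_{i})$ equals $\tfrac1{n-1}$; here Lemma \ref{lem:sumproj} by itself cannot detect that $0\notin\sigma(\sum p_{i})$, and it is precisely this that the positivity lemma is designed to overcome, so that lemma is the step I expect to require the most care. Besides it, only light bookkeeping is needed for degenerate reductions: if some $p_{i}=0$ one discards it and runs the induction with $n-1$ projections, and one keeps the standing nondegeneracy hypothesis (already implicit in Lemma \ref{lem:sumproj}(3)) that no $p_{i}$ is $0$ or $1$, since e.g. part (3) is literally false when some $p_{i}=1$.
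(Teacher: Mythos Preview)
Your proof is correct and follows exactly the route the paper indicates: the paper's entire argument is the one-line remark ``A positivity argument and induction lead to the following corollary,'' and you have supplied the details of precisely that scheme. Your identification of the $S'=1$ subcase of part~(1) as the only genuinely delicate step, and your spectral-cutoff ``positivity lemma'' to dispose of it, are the right way to make the paper's hint rigorous; the remaining bookkeeping (sandwiching $\sum_i p_i$ between multiples of $e+p_n$ and reading off support projections via $\ker(a+b)=\ker a\cap\ker b$) is exactly the intended ``positivity argument.''
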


\begin{defn} \label{defn:stable}\mbox{}
\be
\item If $A$ is a C$^{*}$-algebra, then $K_{0}(A)^{+} = \set{x \in K_{0}(A)}{x = [p] \text{ for some  projection } p \in M_{n}(A)}$

\item If $A$ is a unital C$^{*}$-algebra, we say that $A$ has stable rank one if the set of invertible elements in $A$ is norm-dense in $A$.  If $A$ is nonunital, $A$ is said to have stable rank one if its unitization does.

\ee
\end{defn}

\begin{thm}\label{thm:Avi}
\mbox{}
Assume $A_{1}$ and $A_{2}$ are unital C$^{*}$-algebras with faithful tracial states $\tr_{1}$ and $\tr_{2}$ respectively.  Suppose that $A_{1}$ contains a unitary $u$, and that $A_{2}$ contains unitary elements $v$ and $w$ satisfying
$$
\tr_{1}(u) = \tr_{2}(v) = \tr_{2}(w) = \tr_{2}(vw^{*}) = 0.
$$ 
Let $A = A_{1} * A_{2}$.
\be
\item \cite{MR654842} $A$ is simple with unique tracial state.

\item \cite{MR1478545}  $A$ has stable rank one.  

\item \cite{MR1601917} Let $j_{i}: A_{i} \rightarrow A$ be the canonical inclusions, and let  $j_{i}^{*}: K_{0}(A_{i}) \rightarrow K_{0}(A)$ be the induced map on $K_{0}$.  If $G$ is the subgroup of $K_{0}(A)$ generated by $j_{1}^{*}(A_{1})$ and $j_{2}^{*}(A_{2})$, then 
$$
G \cap K_{0}(A)^{+} = \set{x \in G}{\tr(x) > 0} \cup \{0\}.
$$ 
\ee
\end{thm}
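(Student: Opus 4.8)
The plan is to reduce all three parts to existing literature by checking that the hypotheses of the cited results are met, and this is essentially bookkeeping; the only real content is that the hypotheses on the unitaries $u, v, w$ are exactly the ones appearing in those papers. First I would address (1): the existence in $A_1 * A_2$ of a Haar unitary $u$ (i.e. $\tr_1(u^n) = 0$ for all $n \neq 0$) is not assumed---only $\tr_1(u) = 0$---so I would first observe that in a reduced free product the subalgebra generated by a single unitary with $\tr(u) = 0$ need not be diffuse, and instead invoke the precise statement of \cite{MR654842} (Avitzour's theorem), which requires only the three unitaries $v, w \in A_2$, $u \in A_1$ with $\tr(u) = \tr(v) = \tr(w) = \tr(vw^*) = 0$. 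So (1) is immediate once I quote Avitzour's hypotheses verbatim. The faithfulness and traciality of $\tr_1, \tr_2$ (assumed) guarantee that the free product trace $\tr = \tr_1 * \tr_2$ is faithful, which is what makes "unique tracial state" meaningful.

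For (2), I would cite the stable-rank-one result of Dykema--Haagerup--R\o rdam \cite{MR1478545} (or the relevant successor), noting that its hypotheses are subsumed by---or identical to---those of part (1): a reduced free product of two unital C$^*$-algebras with faithful traces, one of which contains two Haar-unitary-type elements $v, w$ with $\tr(vw^*) = 0$ and the other a trace-zero unitary, has stable rank one. The main thing to verify here is simply that the quantitative condition in \cite{MR1478545} is the freeness-of-$v$-and-$w$ condition encoded by $\tr_2(v) = \tr_2(w) = \tr_2(vw^*) = 0$, together with $\tr_1(u) = 0$; no additional properties of $A_1, A_2$ are needed.

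For (3), I would invoke the $K_0$-computation of Germain (or the cited \cite{MR1601917}) for reduced free products, which under the same hypotheses describes the positive cone of the subgroup $G$ generated by the images $j_1^*(K_0(A_1))$, $j_2^*(K_0(A_2))$ as exactly $\{x \in G : \tr(x) > 0\} \cup \{0\}$. The content is that the trace is the only obstruction to positivity on $G$: the inclusion $\supseteq$ is the nontrivial direction and follows from the cited theorem, while $\subseteq$ is automatic since $\tr$ is a faithful positive trace (so $[p] \mapsto \tr([p])$ is strictly positive on nonzero projections). The one subtlety I would flag is that $G$ may be a proper subgroup of $K_0(A)$, so the statement is about $G$, not all of $K_0(A)$; in the applications to $\cS(\Gamma, \mu)$ one checks separately (via Corollary \ref{cor:K-theory}) that the classes $[p_\alpha]$ actually do generate, so that $G = K_0$.

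The main obstacle---such as it is---is not a proof difficulty but a citation-precision issue: ensuring that the three cited theorems are stated with hypotheses no stronger than "$\tr_1(u) = \tr_2(v) = \tr_2(w) = \tr_2(vw^*) = 0$" (rather than, say, requiring $u$ or $v$ to be Haar unitaries, or requiring the $A_i$ to be exact or nuclear or to have some tracial-rank property), and I would be prepared to supply a short lemma upgrading the hypothesis if any cited result genuinely needs a Haar unitary---e.g. by passing to $A_1 * (A_2 * L\Z)$ or by noting that $\C[u] * \C[v]$ inside $A$ already contains a Haar unitary built from $u$ and $v$.
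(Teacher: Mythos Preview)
Your reading is correct: the paper does not supply a proof of this theorem at all---it is a portmanteau statement collecting three results from the literature, each part carrying its own citation, and is used as a black box in the analysis of $\cS(\Gamma,\mu)$. Your proposal, which amounts to ``check that the hypotheses match those of \cite{MR654842}, \cite{MR1478545}, and \cite{MR1601917} and then cite,'' is exactly the paper's approach, and your observation that only $\tr_1(u)=0$ (not Haar) is required is the right level of care.
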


\begin{defn}\label{defn:diffuse}
Let $A$ be a unital, abelian C$^{*}$ algebra with faithful state $\phi$.  Identify $A \cong C(X)$ with $X$ a compact Hausdorff space, and note that $\phi(f) = \int_{X}f(x)d\mu(x)$ for some probability measure $\mu$ on $X$.  We say that $(A, \phi)$ is \emph{diffuse} if $\mu$ has no atoms.  This is equivalent to $A$ containing a \emph{Haar unitary}, i.e. a unitary element $u$ satisfying $\phi(u^{k}) = 0$ for all $k \in \Z \setminus \{0\}$ \cite{MR1478545}.

\end{defn}

\begin{thm}[\cite{MR1473439}] \label{thm:diffuse}
\mbox{}
\be
\item 
Suppose $A =  A_{1} * A_{2}$ where $A_{1}$ contains a diffuse abelian C$^{*}$-subalgebra and $A_{2} \neq \C$.  
Then $A$ is simple, has stable rank one, and has unique tracial state $\tr$.  

\item 
Suppose $A = A_{1} * \cdots * A_{n}$, where each $A_{i}$ has the form
$$
A_{i} = \underset{\mu_{i}}{\cD} \oplus \C
$$
where $\cD$ contains a diffuse abelian subalgebra.  
Then $A$ is simple and has unique trace if and only if $\sum_{i=1}^{n}\mu_{i} > 1$.  
$A$ always has stable rank one, regardless of the weighting.
\ee
\end{thm}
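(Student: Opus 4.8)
\textbf{Part (1).} By Definition~\ref{defn:diffuse} the hypothesis on $A_1$ is equivalent to the existence of a Haar unitary $u\in A_1$, i.e. a unitary with $\tr_1(u^k)=0$ for all $k\in\Z\setminus\{0\}$. The plan is to run the averaging argument behind Theorem~\ref{thm:Avi}, but with the three special unitaries required of $A_2$ replaced by the weaker hypothesis $A_2\neq\C$ — precisely the generalization of Dykema~\cite{MR1473439}. Let $\tr=\tr_1*\tr_2$ be the canonical faithful tracial state on $A$ and fix a non-scalar self-adjoint $b\in A_2$. The workhorse is the estimate that conjugation by large powers of a Haar unitary makes subalgebras asymptotically free, so that for any element $a$ of the algebraic free product with $\tr(a)=0$ one has $\bigl\|\tfrac1N\sum_{k=1}^N u^k a u^{-k}\bigr\|\to 0$. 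This forces every tracial state on $A$ to agree with $\tr$ on a dense subalgebra, giving uniqueness of trace; run inside a nonzero closed ideal $J$, together with a reduced word built from $b$ and a spectral projection of $b$, it drives a normalized positive element of $J$ to within distance $<1$ of $1$, so $1\in J$ and $A$ is simple. Stable rank one follows from the same Haar-unitary input by Dykema~\cite{MR1478545}.

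\textbf{Part (2), the forward implication.} Write $p_i$ for the identity of the $i$-th copy of $\cD$ inside $A_i\subseteq A$ (so $\tr(p_i)=\mu_i$) and $r_i=1-p_i$ for the minimal projection of its $\C$-summand. Assume $\sum_i\mu_i\le 1$. If the inequality is strict, then $\sum_i\tr(p_i)<1$, so by Corollary~\ref{cor:sumproj}(2) the point $0$ is isolated in the spectrum of $\sum_i p_i$, and $z:=\mathbf{1}_{\{0\}}\bigl(\sum_i p_i\bigr)$ is a nonzero projection of $A$ with $\tr(z)=1-\sum_i\mu_i$. From $(\sum_i p_i)z=0$ and positivity, $p_i z=0$, i.e. $z\le r_i$ for every $i$; then $z$ is orthogonal to the $\cD$-part of each $A_i$ and dominated by the minimal projection $r_i$ of its $\C$-part, so $z$ commutes with all of $A_i$ for every $i$ and is therefore central. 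Hence $\C z$ is a direct summand of $A$ and $A$ is not simple. When $\sum_i\mu_i=1$, Corollary~\ref{cor:sumproj}(3) gives that $\sum_i p_i$ has trivial kernel but is not invertible with $0$ a non-isolated point of its spectrum; the finer analysis of this boundary case likewise obstructs simplicity and uniqueness of trace.

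\textbf{Part (2), the reverse implication, and the obstacle.} Assume $\sum_i\mu_i>1$. I would argue by induction on $n$. The base case $n=2$ combines Lemma~\ref{lem:sumproj} — which gives that $p_1+p_2$ is invertible with $0$ not isolated in its spectrum, so the $\C$-summands create no atom — with a Haar-unitary averaging as in part (1), now carried out with the diffuse abelian subalgebras of the two copies of $\cD$; this yields that $A_1*A_2$ is simple with unique trace and, being simple, infinite-dimensional, and faithfully traced, again contains a unital diffuse abelian subalgebra. For the inductive step, write $A=(A_1*\cdots*A_{n-1})*A_n$: by induction (together with the structure statement just used) the first factor is simple with a unital diffuse abelian subalgebra, and $A_n\neq\C$, so part (1) applies. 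The genuine difficulty is the weight bookkeeping that makes this induction close — verifying at each stage that the non-type-I summand of a reduced free product of algebras of the form $\cD\oplus\C$ is again of that form with an explicitly computable weight, and that a finite-dimensional summand appears exactly when the relevant sum of weights is $\le 1$. This is Dykema's free-dimension calculus for free products of hyperfinite and finite-dimensional algebras~\cite{MR1201693,MR3164718}, which I would import rather than reprove; the remaining inputs (Haar-unitary averaging, the spectral computation with the $p_i$) are routine. Finally, stable rank one for every weighting is Dykema~\cite{MR1478545}.
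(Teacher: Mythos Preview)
This theorem is not proved in the paper; it is cited from Dykema~\cite{MR1473439} as a black box, so there is no in-paper argument to compare against. Your outline tracks the shape of Dykema's argument, but Part~(2) has genuine gaps. In the forward direction at $\sum_i\mu_i=1$, Corollary~\ref{cor:sumproj}(3) alone does not produce a proper ideal or a second trace; the actual obstruction is that the ideal generated by the $p_i$ remains proper (equivalently, the characters $A_i\to\C$ annihilating $p_i$ assemble to a character on the reduced free product), which is precisely the mechanism in Lemma~\ref{lem:Iminimal} when $V_{=}\neq\emptyset$. You should say this rather than invoke ``finer analysis.''

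In the reverse direction your base case $n=2$ does not work as written. Neither $A_i=\cD\oplus\C$ contains a \emph{unital} diffuse abelian subalgebra---the $\C$-summand contributes an atom of mass $1-\mu_i$ to any abelian subalgebra containing it---so Part~(1) does not apply even heuristically. The standard fix is compression via Lemma~\ref{lem:compressfree}(1): one gets $p_1Ap_1=\cD*p_1\bigl((\overset{p_1}{\C}\oplus\C)*A_2\bigr)p_1$, to which Part~(1) genuinely applies; a symmetric argument at $p_2$ together with invertibility of $p_1+p_2$ from Lemma~\ref{lem:sumproj}(1) then shows $p_1$ is full, hence $A$ is simple. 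Separately, the assertion that a simple, infinite-dimensional, faithfully traced C$^*$-algebra automatically contains a unital diffuse abelian subalgebra is not a general theorem; for these specific free products you must exhibit one explicitly (the compressed picture above does so), or reorganize the induction to avoid needing it. With these corrections, and the free-dimension bookkeeping you already defer to~\cite{MR1201693,MR1473439}, the sketch is sound.
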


\begin{lem}[\cite{MR1201693,MR1473439,MR3266249}] \label{lem:compressfree}
\mbox{}
\be
\item Suppose that $A$, $B$, and $C$ are tracial C$^{*}$-algebras with
$$
\mathcal{D} = (\overset{p}{A} \oplus B) * C,
$$
and $\mathcal{D}$ is endowed with the canonical free product trace.  Then
$
p\mathcal{D}p = A * p\left((\overset{p}{\C} \oplus B) * C\right)p.
$

\item Suppose there are two unital, tracial, C$^{*}$-algebras $\overset{p + r}{B} \oplus \overset{r'}{\C} \text{ and } C$ which both contain $D = \overset{p}{\C} \oplus \overset{q}{\C}$ as a unital C$^{*}$-subalgebra with $q = r + r'$. Assume that the algebras are equipped with conditional expectations $E^{1}_{D}$ and $E^{2}_{D}$ onto $D$ respectively as well as traces $\tr_{1}$ and $\tr_{2}$ so that $\tr_{i} = \tr_{i} \circ E^{i}_{D}$ for $i = 1, 2$, and the restrictions of $\tr_{1}$ and $\tr_{2}$ to $D$ coincide.  Form the reduced amalgamated free product
$$
\mathcal{D} = \left( \overset{p + r}{B} \oplus \overset{r'}{\C} \right) \underset{D}{*}\, C.
$$
Then
$$
(p+r)\mathcal{D}(p+r) = (B, E^{1}_{D'}) \underset{D'}{*}\, \left((p+r)\left(\left(\overset{p}{\C} \oplus \overset{r}{\C} \oplus \overset{r'}{\C}\right) \underset{D}{*}\, C\, , E^{2}_{D}\right)(p+r), E^{2}_{D'}\right)
$$
where $D' = \overset{p}{\C} \oplus \overset{r}{\C}$, the conditional expectations $E^{i}_{D'}$ onto $D'$ are the trace preserving ones, and the free product is reduced.

\ee
\end{lem}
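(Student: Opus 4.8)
The plan is to establish each part by the same two-step argument carried out inside the compressed algebra: identify the two C$^{*}$-algebras appearing on the right-hand side with subalgebras of the compression, verify that together they generate it, and verify that they are free (over the relevant scalar, respectively finite-dimensional, subalgebra) with respect to the compressed trace. I spell out part (1); part (2) is the same argument with the scalars $\C 1$ replaced throughout by $D = \overset{p}{\C}\oplus\overset{q}{\C}$ and with extra attention to the conditional expectations.

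For part (1), write $\tau$ for the free-product trace on $\mathcal{D}$ and $\tau_p(x) = \tau(p)^{-1}\tau(pxp)$ for the compressed tracial state on $p\mathcal{D}p$, and set $\mathcal{N} = (\overset{p}{\C}\oplus B)*C$. Since $\overset{p}{\C}\oplus B = \C p\oplus B$ is a C$^{*}$-subalgebra of $\overset{p}{A}\oplus B$, the canonical inclusions make $\mathcal{N}$ a unital subalgebra of $\mathcal{D}$, so $p\mathcal{N}p\subseteq p\mathcal{D}p$; also $A$, with unit $p$, lies in $p\mathcal{D}p$. For generation: $\mathcal{D}$ is spanned by $\C 1$ and alternating words $w = x_1c_1x_2c_2\cdots$ with $x_i\in\overset{p}{A}\oplus B$ and $c_j\in C$; writing $x_i = a_i + b_i$ with $a_i\in A$, $b_i\in B$ and using $pa_i = a_ip = a_i$, one rewrites $pwp$ as a product of elements of $A$ and of elements $p(\cdots)p$ whose argument is a word in $B$-letters (which lie in $\C p\oplus B$) and $C$-letters, hence lies in $p\mathcal{N}p$; so $A\cup p\mathcal{N}p$ generates $p\mathcal{D}p$. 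For freeness: a $\tau_p$-centered element of $A$ is, viewed in $\overset{p}{A}\oplus B\subseteq\mathcal{D}$, centered for $\tau$ and hence centered over $\C 1$ for the freeness of $\overset{p}{A}\oplus B$ from $C$; a $\tau_p$-centered element of $p\mathcal{N}p$ is a compression by $p$ of a linear combination of alternating words in $\C p\oplus B$ and $C$. Feeding an alternating product of $\tau_p$-centered elements of $A$ and $p\mathcal{N}p$ into the ambient freeness of $\overset{p}{A}\oplus B$ from $C$ over $\C 1$ --- each $A$-letter, when it abuts a $(\C p\oplus B)$-letter of a neighboring $\mathcal{N}$-word, getting absorbed into that letter (which remains centered, since the $A$-letter is), and re-centering as one proceeds in the usual way --- shows $\tau$ of the product is $0$. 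Hence $A$ and $p\mathcal{N}p$ are free in $(p\mathcal{D}p,\tau_p)$, i.e. $p\mathcal{D}p = A*(p\mathcal{N}p)$.

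For part (2), put $D_0 = \overset{p}{\C}\oplus\overset{r}{\C}\oplus\overset{r'}{\C}$, $D' = \overset{p}{\C}\oplus\overset{r}{\C} = (p+r)D_0(p+r)$, and $\mathcal{N} = D_0\underset{D}{*}C$, which is well-defined since $D\subseteq D_0$ and $D\subseteq C$ unitally. As $D_0$ embeds unitally in $\overset{p+r}{B}\oplus\overset{r'}{\C}$ and in $\mathcal{N}$, we get $(p+r)\mathcal{N}(p+r)\subseteq(p+r)\mathcal{D}(p+r)$ and $B = (p+r)(\overset{p+r}{B}\oplus\overset{r'}{\C})(p+r)\subseteq(p+r)\mathcal{D}(p+r)$. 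Generation runs exactly as before: expand each $(\overset{p+r}{B}\oplus\overset{r'}{\C})$-letter into a $B$-part and a $\C r'$-part, absorb the $\C r'$-parts into the $\mathcal{N}$-words, and use $1_B = p+r$ to place the cut $p+r$ around the $B$-parts. For freeness one works relative to $D'$: the hypothesis $\tr_i = \tr_i\circ E^i_D$ together with the agreement of $\tr_1,\tr_2$ on $D$ is exactly what makes the trace-preserving conditional expectations $E^i_{D'}$ onto $D'$ restrict and cut down correctly, and the moment computation then proceeds as in part (1) but amalgamated over $D'$, yielding $(p+r)\mathcal{D}(p+r) = (B,E^1_{D'})\underset{D'}{*}\big((p+r)(\mathcal{N},E^2_D)(p+r),\,E^2_{D'}\big)$.

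The step that demands real care is the freeness --- freeness with amalgamation in part (2) --- where one must organize the ambient-freeness computation so that compressed words of $A$ (respectively $B$) and of $\mathcal{N}$ always recombine into genuinely centered alternating words over $\C 1$ (respectively $D$, and then $D'$), keeping track of the various conditional expectations under the corner cuts $p$ and $p+r$; this is routine but fiddly. Equivalently, both identities follow from purely formal manipulations of reduced amalgamated free products --- rewriting the direct sums as amalgamated free products over diagonal (or corner) subalgebras, regrouping by associativity of such products, and compressing by a projection of the amalgam --- which is the route taken in \cite{MR1201693, MR1473439, MR3266249}, and I would simply cite those statements if brevity is preferred.
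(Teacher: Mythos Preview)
The paper does not give its own proof of this lemma: it is stated as a known result and attributed to \cite{MR1201693, MR1473439, MR3266249}. Your sketch is the standard argument from those references --- identify the two pieces inside the compression, check generation, and verify (amalgamated) freeness by unpacking alternating words and re-centering --- and it is correct as an outline. Your closing remark, that one may simply cite those sources, is exactly what the paper does.
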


\begin{lem}[\cite{MR2782689}]\label{lem:simpleamalgamated}
Let $C_{1}$ and $C_{2}$ be unital C$^{*}$ algebras containing the unital C$^{*}$ subalgebra $D$ unitally.  Suppose each $C_{i}$ is equipped with a trace $\tr_{i}$ such that $\tr_{1}$ and $\tr_{2}$ coincide on $D$ and that there exist trace preserving conditional expectations $E^{i}_{D}$ of $C_{i}$ onto $D$.  Consider the reduced amalgamated free product with conditional expectation $E$
$$
(C,E) = (C_{1}, E^{1}_{D}) \underset{D}{\Asterisk} (C_{2}, E^{2}_{D}).
$$
Then $C$ is simple and has unique trace $\tr = \tr_{1} \circ E_{D} = \tr_{2} \circ E_{D}$ provided the following conditions hold:
\be
\item
There exist unitaries $u_{1} \in C_{1}$ and $u_{2}, u_{2}'$ in $C_{2}$ such that $E_{D}(u_{1}) = 0 = E_{D}(u_{2}) = E_{D}(u_{2}') = E(u_{2}^{*}u_{2}')$.
\item
For every $a_{1}, ..., a_{n} \in D$ with zero trace, there exists a unitary $u \in C_{2}$ with expectation 0 such that $E_{D}(ua_{i}u^{*}) = 0$ for each $i$.
\item
There are unitaries $w, v \in C_{2}$ with expectation 0 such that $E_{D}(wav) = 0$ for all $a \in D$.
\ee
Furthermore, let $G$ be the subgroup of $K_{0}(C)$ which is generated by $j_{1}^{*}(K_{0}(C_{1}))$ and $j_{2}^{*}(K_{0}(C_{2}))$ with $j_{i}: C_{i} \rightarrow C$ the canonical inclusion.  If the above three conditions hold, then
$$
K_{0}(C)^{+} \cap G = \{x \in G : \tr(x) > 0\} \cup \{0\}.
$$
\end{lem}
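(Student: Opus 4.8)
The plan is to follow the template of Avitzour's theorem (Theorem~\ref{thm:Avi}) and its refinements to reduced amalgamated free products, reducing simplicity, uniqueness of trace, and the statement about the positive cone to a single averaging principle: for every $x \in C$, the element $\tr(x)\,1$ should lie in the norm-closed convex hull of $\{\,uxu^{*} : u \in \mathcal{U}(C)\,\}$, where $\tr = \tr_{1}\circ E_{D} = \tr_{2}\circ E_{D}$. Granting this, the first two conclusions follow at once. Any tracial state $\tau$ on $C$ is constant on each set $\{uxu^{*}\}$, hence on its closed convex hull, so $\tau(x) = \tau(\tr(x)\,1) = \tr(x)$, which gives uniqueness of trace. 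If $J \subseteq C$ is a nonzero closed two-sided ideal and $0 \neq x = x^{*} \geq 0$ lies in $J$, then $\tr(x) > 0$ because $\tr = \tr_{2}\circ E_{D}$ is faithful (the canonical conditional expectation $E_{D}\colon C \to D$ is faithful and $\tr_{2}$ is faithful on $D$), while $\tr(x)\,1$ is a norm-limit of convex combinations of the elements $uxu^{*} \in J$; since $J$ is norm-closed, the invertible element $\tr(x)\,1$ lies in $J$, whence $J = C$ and $C$ is simple.

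To establish the averaging principle I would proceed in two moves. First, reduce $x$ to its $D$-valued expectation. After approximating $x$ in norm by a finitely supported element of the alternating-word decomposition of $C$, say $x' = E_{D}(x)\cdot 1 + \sum_{\ell=1}^{N}(\text{a finite sum of reduced words of length }\ell)$, one checks that conjugation by a suitably chosen unitary assembled from $C_{2}$ turns a nonzero reduced word, and its iterated conjugates, into reduced words of strictly increasing length; these are pairwise orthogonal in $L^{2}(C,\tr)$ and in fact obey freeness relations strong enough to force the Ces\`aro average over such conjugates to tend to $0$ in operator norm, not merely in $L^{2}$. Summing over $\ell \le N$ then produces a unitary convex average of $x$ lying within a prescribed $\varepsilon$ of $E_{D}(x) \in D$. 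Here hypothesis~(1) (the unitaries $u_{1}\in C_{1}$ and $u_{2},u_{2}'\in C_{2}$ of expectation $0$ with $E_{D}(u_{2}^{*}u_{2}')=0$) supplies enough independence to iterate the conjugation, and hypothesis~(3) (unitaries $w,v\in C_{2}$ of expectation $0$ with $E_{D}(wav)=0$ for all $a\in D$) prevents the conjugated words from collapsing to short length where $D$-valued factors become exposed. Second, treat the $D$-part: the element $E_{D}(x) - \tr(x)\,1 \in D$ has trace $0$, and hypothesis~(2) furnishes, for the finitely many $D$-coefficients appearing, a unitary $u\in C_{2}$ of expectation $0$ with $E_{D}(u\,a\,u^{*})=0$; conjugating $a\in D$ by the powers of a mixed unitary $u_{1}u$ again yields reduced words of strictly growing length, so a Ces\`aro average tends in norm to $\tr(a)\,1 = 0$. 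Composing the two moves places $\tr(x)\,1$ in the closed convex hull of unitary conjugates of $x$.

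For the statement about $K_{0}$, one inclusion is formal: if $x\in G$ and $x=[p]$ for a projection $p\in M_{n}(C)$, then $\tr(x)\geq 0$ and, $\tr$ being faithful on $M_{n}(C)$, $\tr(x)=0$ forces $p=0$; hence $x\in\{\,y\in G:\tr(y)>0\,\}\cup\{0\}$. For the reverse inclusion I would use that $C$ has stable rank one — provable by the same methods as in \cite{MR1478545} and its amalgamated analogues — so that $C$ has cancellation of projections and comparison of projections in each $M_{n}(C)$ is implemented by $\tr$. Given $x\in G$ with $\tr(x)>0$, write $x$ as a $\Z$-combination of classes $[q_{i}]$ of projections $q_{i}$ over the algebras $C_{j_{i}}$; after passing to a common matrix amplification and invoking comparison, the formal combination of positive trace is realized by an honest subprojection, so that $x=[q]\in K_{0}(C)^{+}$.

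The hard part is the first move of the averaging principle: making precise, uniformly in operator norm rather than merely in $L^{2}$, the sense in which conjugation by a well-chosen unitary ``frees a reduced word from itself'' in the amalgamated setting, and verifying that hypotheses~(1)--(3) are exactly what is needed to drag an arbitrary reduced word — together with its internal $D$-valued factors — into that regime. This is the technical heart of the Avitzour--Dykema machinery, and the hypotheses of the lemma are engineered precisely so that those estimates survive amalgamation over $D$.
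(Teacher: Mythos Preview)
The paper does not prove this lemma at all: it is quoted verbatim from the literature (the citation \cite{MR2782689}), so there is no ``paper's own proof'' to compare against. Your sketch is the right template --- Avitzour's Dixmier-type averaging, first from $x$ to $E_{D}(x)$ and then from a traceless element of $D$ to $0$, with hypotheses (1)--(3) supplying exactly the unitaries needed to keep pushing reduced words to higher length --- and your deduction of simplicity and uniqueness of trace from the averaging principle is correct. You are also honest that the genuine work, the operator-norm (not merely $L^{2}$) convergence of the Ces\`aro averages, is left undone; that is indeed the technical core of \cite{MR2782689}, and a complete proof has to carry it out.

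One point in your $K_{0}$ argument deserves correction. Stable rank one gives \emph{cancellation} of projections, not comparison; it does not by itself let you conclude that $\tr(q)<\tr(p)$ implies $q$ is subequivalent to $p$. Moreover, stable rank one for this amalgamated free product is not asserted in the lemma and is treated separately in the paper (Lemma~\ref{lem:amalgamsr1}) under additional hypotheses. The route actually taken in \cite{MR2782689}, following Dykema's scalar-case argument in \cite{MR1601917}, uses the averaging property directly: once $\tr(x)\,1$ lies in the closed convex hull of unitary conjugates of $x$ for every $x$, one obtains strict comparison of projections by $\tr$ inside each $M_{n}(C)$, and the positive-cone statement follows. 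So replace the appeal to stable rank one by a direct appeal to the Dixmier-type property you have already established.
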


\begin{lem}[\cite{MR3266249}]\label{lem:amalgamsr1}
Suppose that $B_{1}$ and $B_{2}$ are unital separable C$^{*}$ algebras both unitally containing $D = \C^{2}$ as a subalgebra.  Assume that $B_{1}$ and $B_{2}$ are equipped with faithful traces $\tr_{1}$ and $\tr_{2}$ respectively such that $\tr_{1}$ and $\tr_{2}$ agree on $D$.  Let $E^{i}_{D}$ be the trace preserving conditional expectation from $B_{i}$ to $D$ for $i \in \{1, 2\}$.  Form the reduced amalgamated free product
$$
(B, E) = (B_{1}, E^{1}_{D}) \underset{D}{*} (B_{2}, E^{2}_{D}).
$$
Let $p$ and $q$ be the two minimal projections in $D$, and assume $\tr_{i}(p) = \tr_{i}(q)$.  Suppose $B_{1}$ contains a unitary $u_{1}$ and $B_{2}$ contains unitaries $u_{2}$ and $u_{2}'$ with $v= pvp + qvq$ for $v \in \{u_{1}, u_{2}, u_{2}'\}$ which also satisfy
$$
E(u_{1}) = 0 = E(u_{2}) = E(u_{2}') = E(u_{2}^{*}u_{2}').
$$
Then $pB p$ and $qB q$ both have stable rank 1.
\end{lem}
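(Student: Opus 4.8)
The plan is to reduce the statement, by a compression argument, to the non-amalgamated reduced free product situation handled by Theorem~\ref{thm:Avi}(2). Since the traces $\tr_i$ are faithful, so is the free product trace $\tr$ on $B$ (the one with $\tr|_{B_i} = \tr_i$ and $\tr = \tr \circ E$); hence the corner $pBp$ carries the faithful tracial state $\tr_p(x) := \tr(pxp)/\tr(p)$, and likewise $qBq$ carries $\tr_q$. Because all of $u_1, u_2, u_2'$ are block-diagonal, i.e.\ $v = pvp + qvq$, the two assertions are symmetric, and it suffices to treat $pBp$; the argument for $qBq$ is verbatim the same with $q$ in place of $p$.

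First I would read the unitary hypotheses off at the level of the corners. If $v = pvp + qvq$ is a unitary of $B_i$ then $pvp$ is a unitary of $pB_ip$ (the cross terms in $v^*v=1$ vanish). From $E(u_1) = 0$, and since $E$ restricts to $E^1_D$ on $B_1$, we get $\tr_1(pu_1p) = 0 = \tr_1(qu_1q)$, so $pu_1p$ is a $\tr_p$-trace-zero unitary of $pB_1p$. Likewise $pu_2p, pu_2'p$ are $\tr_p$-trace-zero unitaries of $pB_2p$, and because $u_2^*u_2'$ is again block-diagonal, $E(u_2^*u_2') = 0$ gives $\tr_p\big((pu_2p)^*(pu_2'p)\big) = 0$. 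Thus $pB_1p$ carries exactly the ``$A_1$'' data and $pB_2p$ the ``$A_2$'' data demanded by Theorem~\ref{thm:Avi}(2), once we know the corner is a reduced free product.

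The structural core is to present $pBp$ as an \emph{ordinary} reduced free product $A_1 * A_2$ (amalgamated only over $\C$, with respect to the states induced by $\tr_p$) in which $pB_1p$ lies unitally and trace-preservingly in $A_1$ and $pB_2p$ lies unitally and trace-preservingly in $A_2$. One already knows that $pB_1p$ and $pB_2p$ are freely independent in $(pBp,\tr_p)$ --- compressing amalgamated-free subalgebras by a projection of the amalgam $D = \C p \oplus \C q$ keeps them free, now over $pDp = \C p$ --- so the real content is that, together with the off-diagonal ``matrix-unit'' part of $B$, they generate $pBp$ and can be assembled into a genuine free product. Here the hypothesis $\tr_i(p) = \tr_i(q)$ enters: one adjoins, freely over $D$, an auxiliary copy of $M_2(\C)$ with diagonal $D$, which supplies a partial isometry making the off-diagonal corners accessible, applies the compression identities of Lemma~\ref{lem:compressfree} to rewrite the corner, and then removes the auxiliary $M_2(\C)$ again without changing $pBp$ up to trace-preserving isomorphism. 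Granting such a decomposition, $A_1$ contains the $\tr_p$-trace-zero unitary $pu_1p$ while $A_2$ contains the $\tr_p$-trace-zero unitaries $pu_2p, pu_2'p$ with $\tr_p\big((pu_2p)^*(pu_2'p)\big) = 0$, so Theorem~\ref{thm:Avi}(2) yields that $A_1 * A_2 = pBp$ has stable rank one; the same argument applies to $qBq$. Feeding the same decomposition into Theorem~\ref{thm:Avi}(3) would moreover compute $K_0^+$ of these corners, but only stable rank one is claimed here.

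The step I expect to be the main obstacle is precisely this middle one: the compression and matrix-unit bookkeeping. One must verify that (i) the auxiliary $M_2(\C)$ is adjoined freely over $D$ trace-compatibly and really can be eliminated afterwards, (ii) after compressing by $p$ the subalgebras $pB_1p$ and $pB_2p$ together with the rotated off-diagonal generators generate the whole corner and organize into an honest free product $A_1 * A_2$, and (iii) the normalized traces of the inherited unitaries are genuinely $0$ in the corner --- the place where $\tr_i(p) = \tr_i(q)$, $E(u_i) = 0$, and $E(u_2^*u_2') = 0$ are all consumed. Once the decomposition is in hand, the conclusion is an immediate application of Theorem~\ref{thm:Avi}(2).
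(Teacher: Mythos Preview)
The paper does not prove this lemma; it is quoted verbatim from \cite{MR3266249} and used as a black box in Case~3 of Lemma~\ref{lem:Isimple}. So there is no proof in this paper to compare your proposal against.

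That said, your plan is the right one and matches the strategy carried out in \cite{MR3266249}: compress by the minimal projection $p\in D$, recognise the corner as an ordinary (scalar-amalgamated) reduced free product, and invoke the Dykema--Haagerup--R\o rdam stable-rank-one theorem recorded here as Theorem~\ref{thm:Avi}(2). Your reading of the unitary hypotheses at the corner level is correct, and you have correctly isolated where the equal-trace assumption $\tr_i(p)=\tr_i(q)$ is consumed.

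Where your proposal stops short of a proof is precisely where you say it does. The claim that $pBp$ decomposes as a reduced free product $A_1 * A_2$ with $pB_ip\subset A_i$ is the entire content of the lemma, and it is \emph{not} true that $p(B_1*_DB_2)p = pB_1p * pB_2p$ on the nose: the corner also contains ``cross'' elements of the form $pb_1qb_2p$, and these must be absorbed into the free factors. The $M_2(\C)$ trick you outline is the standard device for this---adjoin $M_2(\C)$ freely over $D$ to obtain a partial isometry $v$ with $v^*v=q$, $vv^*=p$, rewrite everything in the corner in terms of $pB_ip$ and the conjugates $vqB_iqv^*$, check freeness of the resulting pieces, and then argue that adjoining and removing the auxiliary $M_2(\C)$ does not alter $pBp$---but each of these steps requires verification, and none is carried out here. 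As written, the proposal is an accurate outline of the argument in \cite{MR3266249} rather than a self-contained proof.
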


The key to unlocking the ideal structure of $\cS(\Gamma, \mu)$ is showing the existence of a minimal ideal that ``avoids" the sets $V_{>}(\Gamma, \mu)$ and $V_{=}(\Gamma, \mu)$.  To begin, we fix a vertex, $\alpha$ of minimal weight in $V$.  If there are two such vertices which are joined by an edge, we will choose $\alpha$ to have the additional property that there is another edge connected to $\alpha$.  Notice that this choice guarantees that $\alpha \not\in V_{\geq}(\Gamma, \mu)$.

\begin{lem}\label{lem:Iminimal}
Let $I$ be the closed ideal in $\cS(\Gamma, \mu)$ generated by $p_{\alpha}$.  Then $I$ contains $\set{p_{\beta}}{\beta \not\in V_{\geq}(\Gamma, \mu)}$.  Furthermore, $\cS(\Gamma, \mu)/I \cong \bigoplus_{\gamma \in V_{\geq 0}} \C$.  In addition, $I$ is unital if and only if $V_{=}(\Gamma, \mu)$ is empty.
\end{lem}

\begin{proof}
Let $\beta$ be a neighboring vertex of $\alpha$.\\

\underline{Case 1}:  Suppose that $\beta \not\in V_{\geq}(\Gamma, \mu)$.  Let $e_{1}, \dots, e_{n}$ denote the edges in $\Gamma$ which have $\beta$ as an endpoint, and let $\e_{1}, \dots, \e_{n}$ be the corresponding oriented edges in $\vec{\Gamma}$ having $t(\e_{i}) = \beta$ for all $i$, and $s(\e_{1}) = \alpha$.  Using Theorem \ref{thm:freeexpectation}, the traces of the support projections of the elements $X_{\e_{i}}^{*}X_{\e_{i}}$ add up to $\sum_{\gamma \sim \beta} n_{\gamma, \beta}\mu(\beta)$ which by hypothesis exceeds $\mu(\beta)$.  It follows from Theorem \ref{thm:diffuse} that the C$^{*}$-algebra generated by the elements $X_{\e_{i}}^{*}X_{\e_{i}}$ is simple which means that $p_{\beta}$ is in the ideal generated by these elements.  Noting that $X_{\e_{1}}^{*}X_{\e_{1}} = X_{\e_{1}}^{*}p_{\alpha}X_{\e_{1}}$, this shows that $p_{\beta} \in I$.

\underline{Case 2}: Suppose that $\beta \in V_{\geq}$ and let $e_{1}, \dots, e_{n}$ and $\epsilon_{1}, \dots, \epsilon_{n}$ as in the previous part.  Every element in $I$ which is supported under $p_{\beta}$ must be a norm limit of elements of the form
$$
\sum_{i, j} X^{*}_{\e_i}x_{i,j}p_{\alpha}y_{i,j}X_{\e_j}
$$ 
for $x_{i, j}, y_{i, j}$ in the $*$-algebra generated by $\set{X_{\e}}{\e \in \vec{E}}$.

If $n \geq 2$, then the support projections $p_{\e_{i}}$ of $X_{\e_{i}}^{*}X_{\e_{i}}$ are in $\cS(\Gamma, \mu)$ and are free with respect to $\Tr(p_{\alpha} \cdot p_{\alpha})$.  We have $\sum_{i = 1}^{n} \Tr(p_{\e_{i}}) \leq \Tr(p_{\beta})$.  Furthermore, if $x \in p_{\beta}Ip_{\beta}$, then $x$ must be in the hereditary C$^*$-algebra $(\sum_{i  =1}^{n} p_{\e_{i}})\cS(\Gamma)(\sum_{i  =1}^{n} p_{\e_{i}})$.   By Corollary \ref{cor:sumproj}, $\sum_{i  =1}^{n} p_{\e_{i}}$ is not invertible in $p_{\beta}\cS(\Gamma)p_{\beta}$.  This implies $p_{\beta} \not\in I$.  The unital C$^{*}$-subalgebra in $p_{\beta}\cS(\Gamma, \mu)p_{\beta}$ generated by the elements $X_{\e_{i}}^{*}X_{\e_{i}}$ is the reduced free product
$$
B = \underset{i=1}{\overset{n}{\Asterisk}} (\overset{p_{\e_{i}}}{C^{*}(X_{\e_{i}}^{*}X_{\e_{i}})} \oplus \C).
$$
By Lemma \ref{lem:compressfree} and Theorem \ref{thm:diffuse}, $p_{\e_{i}}Bp_{\e_{i}}$ is simple for each $i$ and the element $p_{\e_{i}}X_{\e_{1}}^{*}X_{\e_{1}}p_{\e_{i}}$ is nonzero.  This means that $p_{\e_{i}} \in I$ for all $i$, hence implying that $p_{s(\e_{i})} \in I$ for all $i$.

If $n = 1$, then every element in $I$ supported under $p_{\alpha}$ is a norm limit of elements of the form $X_{\e_{1}}^{*}yX_{\e_{1}}$.  Since $X_{\e_{1}}^{*}X_{\e_{1}}$ is not invertible in $p_{\beta}\cS(\Gamma, \mu)p_{\beta}$ it follows that $p_{\beta}\not\in I$.

Continuing this argument inductively, we see that $I$ contains $\set{p_{\gamma}}{\gamma \in V\setminus V_{\geq}}$ and does not intersect $V_{\geq}$.  This means that $I$ is generated as an ideal by $\set{X_{\e}}{\e \in \vec{E}}$ since at least one vertex in $V\setminus V_{\geq}$ is an endpoint of $\e$ for each $\e \in E$.  If $\gamma \in V_{\geq}$ then every element in $p_{\gamma}\cS(\Gamma, \mu)p_{\gamma}$ is a norm limit of elements of the form $cp_{\gamma} + p_{\gamma}x p_{\gamma}$ where $x$ is a polynomial in the $X_{\e}$'s.  The arguments in case 2 show that $p_{\gamma}$ is not a norm limit of expressions  of the form $p_{\gamma}x p_{\gamma}$.  This implies that $\cS(\Gamma, \mu)/I \cong \bigoplus_{\gamma \in V_{\geq}} \C$.

Finally we verify the statement on the unitality of $I$. Let $\gamma \in V_{\geq}$ and $\e'_{1}, \dots, \e'_{m}$ be all of the edges satisfying $t(\e'_{i}) = \gamma$.  If $V_{=}$ is empty and $p_{\e'_{i}}$ is the support projection of $X_{\e_{i}}^{*}X_{\e_{i}}$, then since $\sum_{i=1}^{m} \Tr(p_{\e'_{i}}) < \Tr(p_\gamma)$, $\{0\}$ is open in the spectrum of $\sum_{i} p_{i}$ from Corollary \ref{cor:sumproj} which implies that the support projection, $q_{\gamma}$ of $\sum_{i=1} p_{\e'_i}$ is in $I$.  
The element $\sum_{\gamma \not\in V_{\geq}} p_{\gamma} + \sum_{\gamma \in V_{\geq}} q_{\gamma}$ is the unit for $I$. 

If $V_{=}$ is not empty, $m \geq 2$, and $\gamma \in V_{=}$, then since $\sum_{i} \Tr(p_{\e_{i}}) = \Tr(p_\gamma)$, $p_{\gamma}$ is in the strong closure (in $L^{2}(\cS(\Gamma, \mu), \Tr)$) of the C$^{*}$-algebra generated by $\sum_{i} \Tr(p_{\e_{i}})$ \cite{MR3110503}, so it follows that $p_{\gamma}$ is the support projection of $\sum_{i} \Tr(p_{\e_{i}})$.  Therefore $I$ is not unital.  Finally, if $m = 1$ and $\gamma \in V_{=}$, then since the law of $X_{\e'_{1}}^{*}X_{\e'_{1}}$ in $p_{\gamma}(\cS(\Gamma), \Tr)p_{\gamma}$ has no atoms, it follows that $p_{\gamma}$ is in the strong closure of $I$ so $I$ is not unital. 
\end{proof}

\begin{cor} If $V_{=}$ is empty, then 
$$
\cS(\Gamma, \mu) = I \oplus \bigoplus_{\gamma \in V_{>}} \overset{r_{\gamma}}{\C}
$$
with $r_{\gamma} \leq p_{\gamma}$ and $\Tr(r_{\gamma}) = \mu(\gamma) - \sum_{\beta \sim \gamma}n_{\alpha, \beta}\mu(\beta)$.  If $V_{=}$ is not empty, then 
$$
\cS(\Gamma, \mu) = \mathcal{I} \oplus \bigoplus_{\gamma \in V_{>}} \overset{r_{\gamma}}{\C}
$$
where $\mathcal{I}$ is unital, the strong operator closures of $I$ and $\mathcal{I}$ coincide in $L^{2}(\cS(\Gamma, \mu), \Tr)$, and $\mathcal{I}/I \cong \bigoplus_{\beta \in V_{=}} \C$.

\end{cor}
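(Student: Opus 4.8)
The plan is to realize each summand $\overset{r_\gamma}{\C}$ (for $\gamma\in V_{>}$) as a minimal central projection split off from $\cS(\Gamma,\mu)$, and then to identify the complementary corner with $I$ when $V_{=}$ is empty, and with a unital ideal $\mathcal I$ otherwise, using Lemma~\ref{lem:Iminimal}. The projections are the ones already implicit in the proof of Lemma~\ref{lem:Iminimal}: for $\gamma\in V_{>}$ let $\e_1,\dots,\e_m$ be the oriented edges with $t(\e_i)=\gamma$ (there are no loops at $\gamma$: a loop would give $n_{\gamma,\gamma}\ge 1$, forcing $\sum_{\beta\sim\gamma}n_{\gamma,\beta}\mu(\beta)\ge\mu(\gamma)$, contradicting $\gamma\in V_{>}$), let $p_{\e_i}\in p_\gamma\cS(\Gamma,\mu)p_\gamma$ be the support projection of $X_{\e_i}^{*}X_{\e_i}$, let $q_\gamma$ be the support projection of $\sum_i p_{\e_i}$, and set $r_\gamma=p_\gamma-q_\gamma$; for $\gamma\notin V_{>}$ put $q_\gamma=p_\gamma$, $r_\gamma=0$, so that $\sum_\gamma q_\gamma=1-R$ where $R:=\sum_{\gamma\in V_{>}}r_\gamma$. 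Since $\gamma\in V_{>}$ forces $\mu(\gamma)>\mu(s(\e_i))$, Theorem~\ref{thm:freeexpectation}(3) gives $\Tr(p_{\e_i})=\mu(s(\e_i))$ and shows that $0$ is isolated in the spectrum of $X_{\e_i}^{*}X_{\e_i}$, whence $p_{\e_i}\in C^{*}(X_{\e_i}^{*}X_{\e_i})\subseteq I$; by Theorem~\ref{thm:freeexpectation}(2) the $p_{\e_i}$ are free in $p_\gamma\cS(\Gamma,\mu)p_\gamma$, and $\sum_i\Tr(p_{\e_i})=\sum_{\beta\sim\gamma}n_{\gamma,\beta}\mu(\beta)<\mu(\gamma)$, so Corollary~\ref{cor:sumproj} (or directly when $m=1$) shows $0$ is isolated in the spectrum of $\sum_i p_{\e_i}$, giving $q_\gamma\in I$, $\Tr(q_\gamma)=\sum_{\beta\sim\gamma}n_{\gamma,\beta}\mu(\beta)$, and hence $\Tr(r_\gamma)=\mu(\gamma)-\sum_{\beta\sim\gamma}n_{\gamma,\beta}\mu(\beta)>0$; in particular $r_\gamma\neq 0$ and $r_\gamma\le p_\gamma$.

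The heart of the argument is to show that each $r_\gamma$ ($\gamma\in V_{>}$) is \emph{central} in $\cS(\Gamma,\mu)$, with $r_\gamma\cS(\Gamma,\mu)r_\gamma=\C r_\gamma$. I would check this against the generating set $\{p_v\}\cup\{X_e\}$. Commutation with the $p_v$ is immediate since $r_\gamma\le p_\gamma$, so it remains to prove $r_\gamma X_\e=X_\e r_\gamma=0$ for every oriented edge $\e$ (then $X_e=\sum_\e X_\e$ is handled too). Here one uses that $X_\e=p_{s(\e)}X_\e p_{t(\e)}$, that the right support projection of $X_\e$ is $p_\e$ (the support of $X_\e^{*}X_\e$), and that the left support projection of $X_\e$ is the support of $X_\e X_\e^{*}=X_{\e^{\op}}^{*}X_{\e^{\op}}$, namely $p_{\e^{\op}}$, where $X_\e^{*}=X_{\e^{\op}}$ by Remark~\ref{rem:opcirc}. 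Since $p_\e\le\sum_{\eta:\,t(\eta)=t(\e)}p_\eta$ we get $p_\e\le q_{t(\e)}$, and likewise $p_{\e^{\op}}\le q_{s(\e)}$; hence $X_\e r_{t(\e)}=X_\e p_\e r_{t(\e)}=0$ (as $p_\e\le q_{t(\e)}\perp r_{t(\e)}$) and $r_{s(\e)}X_\e=r_{s(\e)}p_{\e^{\op}}X_\e=0$, while for $\gamma\notin\{s(\e),t(\e)\}$ one has $r_\gamma X_\e=r_\gamma p_{s(\e)}X_\e=0$ and $X_\e r_\gamma=X_\e p_{t(\e)}r_\gamma=0$ because $p_\gamma\perp p_{s(\e)},p_{t(\e)}$. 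Granting this, $R$ is a central projection, $r_\gamma\cS(\Gamma,\mu)r_\gamma=\C r_\gamma$, $RI=0$ (since $I$ is generated by the $X_\e$, all of which $R$ annihilates), and therefore $\cS(\Gamma,\mu)=(1-R)\cS(\Gamma,\mu)(1-R)\oplus\bigoplus_{\gamma\in V_{>}}\C r_\gamma$.

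It then remains to identify $\mathcal I:=(1-R)\cS(\Gamma,\mu)(1-R)=(1-R)\cS(\Gamma,\mu)$, a unital ideal of $\cS(\Gamma,\mu)$ containing $I$. If $V_{=}$ is empty, then $V_{\ge}=V_{>}$ and Lemma~\ref{lem:Iminimal} says $I$ is unital with unit $\sum_{\gamma\notin V_{\ge}}p_\gamma+\sum_{\gamma\in V_{>}}q_\gamma=\sum_\gamma q_\gamma=1-R$; a unital ideal is a direct summand equal to the corner cut by its unit, so $\mathcal I=I$, which is the first displayed formula. If $V_{=}$ is nonempty, then $\mathcal I$ is unital (unit $1-R$) and contains $I$ properly; combining $\cS(\Gamma,\mu)/I\cong\bigoplus_{\gamma\in V_{\ge}}\C$ (Lemma~\ref{lem:Iminimal}) with the splitting $\cS(\Gamma,\mu)=\mathcal I\oplus\bigoplus_{\gamma\in V_{>}}\C r_\gamma$ and the fact that $I\cap R\cS(\Gamma,\mu)=0$ gives $\bigoplus_{\gamma\in V_{\ge}}\C\cong\mathcal I/I\oplus\bigoplus_{\gamma\in V_{>}}\C$, hence $\mathcal I/I\cong\bigoplus_{\beta\in V_{=}}\C$; since each $p_\beta$ ($\beta\in V_{=}$) satisfies $p_\beta R=0$ and $p_\beta\notin I$, these lie in $\mathcal I$ and span $\mathcal I/I$, and in fact $\mathcal I=I+\spann\{p_\beta:\beta\in V_{=}\}$. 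Finally, the last paragraph of the proof of Lemma~\ref{lem:Iminimal} shows that $p_\beta$ lies in the strong closure of $I$ in $L^{2}(\cS(\Gamma,\mu),\Tr)$ for every $\beta\in V_{=}$ (through the corresponding identity in $\cM(\Gamma,\mu)$ from \cite{MR3110503}, or the no-atoms statement when $\beta$ has a single incident edge), so $I$ and $\mathcal I$ have the same strong closure. The main obstacle is the centrality of the $r_\gamma$ established in the second paragraph; once that is in hand the decomposition is a repackaging of Lemma~\ref{lem:Iminimal}.
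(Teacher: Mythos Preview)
Your proof is correct and is essentially the argument the paper leaves implicit (the corollary is stated without proof). You make explicit the key point the paper suppresses, namely that the projections $r_\gamma=p_\gamma-q_\gamma$ (with $q_\gamma$ exactly as in the final paragraph of the proof of Lemma~\ref{lem:Iminimal}) are central, and your verification on the generators $X_\e$ via their left/right supports is the natural way to do this; the remaining identifications of $I$, $\mathcal I$, $\mathcal I/I$, and the strong-closure statement are then a direct repackaging of Lemma~\ref{lem:Iminimal}.
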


We will now show that $I$ is minimal.

\begin{lem}\label{lem:Isimple}
Let $I$ be as in Lemma \ref{lem:Iminimal}.  Then $I$ is simple, has unique trace, and has stable rank 1.
\end{lem}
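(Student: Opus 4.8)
The plan is to realize $I$ (or a suitable corner of it) as a corner of a reduced amalgamated free product to which one of the simplicity/stable-rank results collected above applies. Recall from Lemma~\ref{lem:Iminimal} that $I$ is generated as an ideal by $\set{X_{\e}}{\e \in \vec{E}}$, and from Theorem~\ref{thm:freeexpectation} that the $(X_e)_{e \in E}$ are free with amalgamation over $\cC$ with respect to $E$. First I would reduce to a corner: fix the distinguished minimal-weight vertex $\alpha \notin V_{\geq}$ from Lemma~\ref{lem:Iminimal}, and note that since $I$ contains every $p_\beta$ with $\beta \notin V_{\geq}$ and $\Gamma$ is connected, all the corners $p_\beta I p_\beta$ for $\beta \notin V_{\geq}$ are full hereditary subalgebras of $I$ and are mutually (stably) isomorphic via the partial isometries coming from polar parts of the $X_\e$ (Remark~\ref{rem:polar}); moreover $p_{V_{\geq}} I p_{V_{\geq}} = 0$. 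Thus it suffices to prove that some single corner $p_\beta I p_\beta$ is simple, has unique trace, and has stable rank one — these three properties pass between $I$ and a full hereditary subalgebra (simplicity and stable rank one are preserved under passing to hereditary subalgebras and back for $\sigma$-unital algebras, and unique trace likewise since $\Tr$ is faithful and semifinite).

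Next I would identify $p_\alpha \cS(\Gamma,\mu) p_\alpha$ as a reduced free product indexed by the edges at $\alpha$. Expanding, the $*$-algebra generated by $\cC$ and the $X_e$, compressed by $p_\alpha$, is generated by the elements $p_\alpha X_e X_e^{*} p_\alpha$-type words; using freeness with amalgamation over $\cC$ and Lemma~\ref{lem:compressfree}(1)--(2) repeatedly to push the compression inside, one writes $p_\alpha \cS(\Gamma,\mu) p_\alpha$ as a reduced free product (amalgamated over the relevant $\C^2$'s or $\C$'s) of the algebras $C^{*}(X_\e^{*}X_\e) \oplus \C$ coming from edges $\e$ with $t(\e)=\alpha$, together with corners of the neighboring vertex algebras. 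The point of choosing $\alpha$ of minimal weight with an extra edge attached is exactly that in the relevant free-product decomposition the diffuse-abelian hypothesis of Theorem~\ref{thm:diffuse}(1) is met — each $C^{*}(X_\e^{*}X_\e)$ is diffuse abelian by the Free Poisson law in Theorem~\ref{thm:freeexpectation}(3) (the $\mu(\beta)\le\mu(\alpha)$ case gives a measure with no atoms) — and there are at least two free factors, so the free product is simple with unique trace and stable rank one. Then restricting to the ideal part: $p_\alpha I p_\alpha$ is the corner of the ideal generated by the $X_\e$, which inside this free product is the (closed two-sided) ideal generated by the non-unit parts of the diffuse factors; since the ambient corner is already simple, $p_\alpha I p_\alpha$ either equals it or is zero, and it is nonzero because $p_\alpha = $ support of some $X_\e^{*}X_\e \in I$ lies in it. Hence $p_\alpha I p_\alpha = p_\alpha \cS(\Gamma,\mu) p_\alpha$ is simple with unique trace and stable rank one, and we pull this back up to $I$ as in the previous paragraph. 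For stable rank one one can alternatively invoke Lemma~\ref{lem:amalgamsr1} / Lemma~\ref{lem:simpleamalgamated} directly on the amalgamated free product presentation, checking the unitary conditions using Haar unitaries built from the diffuse abelian subalgebras $C^{*}(X_\e^{*}X_\e)$.

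The main obstacle I expect is bookkeeping in the compression step: writing $p_\alpha\cS(\Gamma,\mu)p_\alpha$ cleanly as a reduced (amalgamated) free product requires carefully tracking which vertex projections survive the compression, handling the $\C^2$-amalgamation when two minimal-weight vertices are adjacent (this is precisely why $\alpha$ is chosen with an extra incident edge), and verifying the trace-preserving conditional-expectation hypotheses of Lemmas~\ref{lem:compressfree}, \ref{lem:simpleamalgamated}, and~\ref{lem:amalgamsr1}. A secondary subtlety is the non-unital/$\sigma$-unital argument that simplicity, unique trace, and stable rank one transfer between $I$ and its full corner $p_\alpha I p_\alpha$; this is standard (e.g.\ Brown's theorem for stable rank one of hereditary subalgebras, and fullness plus faithfulness of $\Tr$ for the trace), but it must be invoked rather than proved. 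Once the free-product picture is set up, the simplicity and uniqueness of trace are immediate from Theorem~\ref{thm:diffuse}(1) and stable rank one from the same theorem or from Lemma~\ref{lem:amalgamsr1}.
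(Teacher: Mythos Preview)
Your overall strategy --- reduce to the full corner $p_\alpha \cS(\Gamma,\mu) p_\alpha = p_\alpha I p_\alpha$ and then invoke free-product simplicity/stable-rank results --- is the same as the paper's, and the Morita-type transfer of simplicity, unique trace, and stable rank one from the corner back to $I$ is fine (since $p_\alpha$ generates $I$ as an ideal it is full). One incidental slip: $p_{V_{\geq}} I p_{V_{\geq}} \neq 0$ in general, because $I$ contains every $X_\e$ and hence elements such as $X_\e^* X_\e$ supported under $p_\gamma$ for $\gamma \in V_{\geq}$; this does not, however, affect your reduction.

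The genuine gap is the decomposition of the corner. You claim that $p_\alpha \cS(\Gamma,\mu) p_\alpha$ can be written as a reduced (amalgamated) free product of the pieces $C^*(X_\e^* X_\e) \oplus \C$ over the edges at $\alpha$ ``together with corners of the neighboring vertex algebras,'' obtained by applying Lemma~\ref{lem:compressfree} repeatedly. That lemma only applies in the specific direct-sum configurations it lists, and the $\cC$-freeness of the $X_e$ does \emph{not} descend to scalar freeness of compressed edge pieces unless the edges at $\alpha$ lie in distinct components of $\Gamma \setminus \{\alpha\}$. Concretely, if $\alpha$ lies on a cycle (take a triangle), then loops at $\alpha$ traversing the cycle belong to $p_\alpha \cS p_\alpha$ but are not generated by the individual $X_\e^* X_\e$, and there is no scalar free-product splitting along the edges. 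This is why the paper proceeds by a three-way case split: (1) $\alpha$ is a cut vertex separating $\Gamma$ into two subgraphs with edges, giving a genuine scalar free product and Theorem~\ref{thm:diffuse}; (2) $\alpha$ is a leaf, so one passes to the equivalent projection $q_\alpha \leq p_\beta$ at the unique neighbor and decomposes \emph{that} corner; (3) $\alpha$ lies on a cycle, where one must work with the larger corner $(p_\alpha + q_\alpha)\cS(\Gamma,\mu)(p_\alpha + q_\alpha)$ as an amalgamated free product over $D' = \C p_\alpha \oplus \C q_\alpha \cong \C^2$ and then verify by hand the unitary hypotheses of Lemma~\ref{lem:simpleamalgamated} and Lemma~\ref{lem:amalgamsr1}. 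Your closing paragraph correctly senses that the compression step is the crux, but misdiagnoses the source of the $\C^2$-amalgamation: it arises from cycles through $\alpha$, not from adjacency of two minimal-weight vertices, and it cannot be bypassed by iterated applications of Lemma~\ref{lem:compressfree}.
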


\begin{proof} The arguments here mirror arguments used in Section 4 of \cite{MR3266249}.\\

\underline{Case 1}:  We first assume that there is a vertex $\alpha$ of minimal weight such that $\Gamma = \Gamma_{1} \cup \Gamma_{2}$, where $\Gamma_{1}$ and $\Gamma_{2}$ are two connected subgraphs of $\Gamma$, each of which have a nonempty edge set, share no edges in common, and intersect only at the vertex $\alpha$.   
We have $p_{\alpha}\cS(\Gamma,\mu)p_{\alpha} = p_{\alpha}\cS(\Gamma_{1}, \mu)p_{\alpha} * p_{\alpha}\cS(\Gamma_{2}, \mu)p_{\alpha}$.  Let $\e_{1}$ and $\e_{2}$ be oriented edges with $t(\e_i) = \alpha$ and $\e_{i} \in \vec{E}(\Gamma_{i})$.  
The elements $X_{\e_i}^{*}X_{\e_i}$ generate diffuse abelian C$^{*}$-subalgebras of $p_{\alpha}\cS(\Gamma,\mu)p_{\alpha}$, so it follows from Theorem \ref{thm:diffuse} that $p_{\alpha}\cS(\Gamma,\mu)p_{\alpha}$ is simple, has unique trace, and has stable rank 1 from Theorem \ref{thm:Avi} or Theorem \ref{thm:diffuse}.  This implies that $I$ is simple, has unique trace, and has stable rank 1. 
 
 \underline{Case 2}: We assume that $\alpha$ is only connected to one other vertex $\beta$ by only one edge $e_{1}$.  Let $\e_{1} \in \vec{E}$ have $s(\e_{1}) = \alpha$ and $t(\e_{1}) = \beta$.  Notice that the assumption on $\alpha$ implies that there is at least one other undirected edge $e_{2}$ with $\beta$ as an endpoint, and $\mu(\beta) > \mu(\alpha)$ since equality implies that we are in Case 1 above.  Assume that $t(\e_{2}) = \beta$.   Let $\tilde{\Gamma}$ be the graph which is obtained by removing the edge $e_{1}$ from $\Gamma$.  Let $q_{\alpha}$ be the support projection of $X_{\e_{1}}^{*}X_{\e_{1}}$ and note that $q_{\alpha} \leq p_{\beta}$ and $q_{\alpha}$ is equivalent to $p_{\alpha}$.  If $B$ is the C$^{*}$-algebra generated $q_{\beta}$ and $\cS(\tilde{\Gamma})$   It follows from Lemma \ref{lem:compressfree} that   
 $$
 q_{\alpha}\cS(\Gamma)q_{\alpha} = q_{\alpha}C^{*}(X_{\e}^{*}X_{\e})q_{\alpha} * q_{\alpha}Bq_{\alpha}.
 $$
 The C$^{*}$-algebra $q_{\alpha}C^{*}(X_{\e}^{*}X_{\e})q_{\alpha}$ is diffuse and abelian.  The algebra $q_{\alpha}Bq_{\alpha}$ contains the element $q_{\alpha}X_{\e_{2}}^{*}X_{\e_{2}}q_{\alpha}$ which generates a diffuse von-Neumann algebra since the support projection of $X_{\e_{2}}^{*}X_{\e_{2}}$ has trace at least as large as $\mu(\alpha)$.  It follows that $q_{\alpha}Bq_{\alpha}$ has a unitary of trace zero.  Therefore, from Theorem \ref{thm:Avi}, $q_{\alpha}\cS(\Gamma)q_{\alpha}$ is simple, has unique trace, and has stable rank 1.  Therefore, $I$ is simple, has unique trace, and has stable rank 1.
 
 \underline{Case 3}: We assume that there are two distinct vertices, $\alpha$ and $\beta$, an edge $e_{1}$ joining $\alpha$ and $\beta$, and a path from $\alpha$ to $\beta$ that avoids $e_{1}$.  We may also assume that $\alpha$ is of minimal weight, and $\alpha \not\in V_{\geq}$.  Let $\e_{1} \in E(\vec{\Gamma})$ be the edge associated to $e_{1}$ which satisfies $t(\e_{1}) = \beta$ and $s(\e_{1}) = \alpha$, and set $q_{\alpha}$ to be the support projection of $X_{\e_{1}}^{*}X_{\e_{1}}$ and note that $q_{\alpha} \leq p_{\beta}$.  Also note that $q_{\alpha} \in \cS(\Gamma, \mu)$ since if $X_{\e_{1}}^{*}X_{\e_{1}}$ has connected spectrum, then $q_{\alpha} = p_{\beta}$.  
 
Set $\cB = (p_{\alpha} + q_{\alpha})\cS(\Gamma, \mu)(p_{\alpha} + q_{\alpha})$.  Let $\Gamma'$ be the subgraph of $\Gamma$ obtained by deleting the edge $e_{1}$, and and $\Gamma''$ be the subgraph of $\Gamma$ whose vertices are $\alpha$ and $\beta$ and whose edge set is $\{e_{1}\}$.  Set $D = C^{*}(\{ p_{\alpha}, p_{\beta} \})$  and $D' = C^{*}(\{p_{\alpha}, q_{\alpha}\})$.  From Lemma \ref{lem:compressfree}, it follows that
$$
\cB =  (p_{\alpha} + q_{\alpha})\cS(\Gamma'', \mu)(p_{\alpha} + q_{\alpha}) \underset{D'}{*} (p_{\alpha} + q_{\alpha})\cB'(p_{\alpha} + q_{\alpha})
$$
where the conditional expectations are the trace preserving ones, and 
$$
\cB' = \left(\overset{p_{\alpha}}{\C} \oplus \overset{q_{\alpha}}{\C} \oplus \overset{p_{\beta} - q_{\alpha}}{\C}\right) \underset{D}{*} (p_{\alpha} + p_{\beta})\cS(\Gamma', \mu)(p_{\alpha} + q_{\beta}). 
$$
We now show that $\cB$ satisfies the conditions in Lemma \ref{lem:simpleamalgamated}.

Theorem \ref{thm:freeexpectation} determines the structure of $(p_{\alpha} + q_{\alpha})\cS(\Gamma'', \mu)(p_{\alpha} + q_{\alpha})$.  Explicitly, if $\mu(\alpha) < \mu(\beta)$, then 
$$
(p_{\alpha} + q_{\alpha})\cS(\Gamma'', \mu)(p_{\alpha} + q_{\alpha}) \cong M_{2}(\C) \otimes C[0, 1]
$$ 
with trace $\Tr_{M_{2}}(\C) \otimes \int\cdot d\lambda$ with $d\lambda$ Lebesgue measure.  In this isomorphism, we have
$$
p_{\alpha} \mapsto 
\begin{pmatrix}
1 & 0\\
0 & 0
\end{pmatrix}
\text{ and }
q_{\alpha} \mapsto
\begin{pmatrix}
0 & 0\\
0 & 1
\end{pmatrix}.
$$
If $\mu(\alpha) = \mu(\beta)$, then 
$$
(p_{\alpha} + q_{\alpha})\cS(\Gamma'', \mu)(p_{\alpha} + q_{\alpha}) \cong\set{f: [0, 1] \rightarrow M_{2}(\C)}{f \text{ is continuous and } f(0) \text{ is 
diagonal}}
$$
with the above trace and identifications for $p_{\alpha}$ and $q_{\alpha}$.  In either case, the unitary
$$
U =  \begin{pmatrix}
\cos(2\pi t) & -\sin(2\pi t)\\
\sin(2\pi t) & \cos(2\pi t)
\end{pmatrix}
$$
lies in $(p_{\alpha} + q_{\alpha})\cS(\Gamma'', \mu)(p_{\alpha} + q_{\alpha})$.  The traceless elements of $D'$ are spanned by 
$$
x = \begin{pmatrix}
1 & 0\\
0 & -1
\end{pmatrix}
$$
and it is easy to check that $UxU^{*}$ has zero expectation so condition (2) from Lemma \ref{lem:simpleamalgamated} is satisfied.  Note that one has
$$
E(u) = E(u^{*}) = E(u^{2}) = E(u(u^{*})^{*}) = 0.
$$
Choose edges $e_{2}$ and $e_{3}$ of $\Gamma'$ whose oriented versions satisfy $t(\e_{2}) = \alpha$ and $t(\e_{3}) = \beta$.  There is a Haar unitary $v_{1}$ in $p_{\alpha}\cS(\Gamma', \mu)p_{\alpha}$ in the continuous functional calculus of $X_{\e_{2}}^{*}X_{\e_{2}}$.  As in the previous case, $q_{\alpha}X_{\e_{3}}^{*}X_{\e_{3}}q_{\alpha}$ generates a diffuse von Neumann algebra, so it follows that there is a unitary, $v_{2}$, of trace zero in $q_{\alpha}\cS(\Gamma', \mu)q_{\alpha}$.  This implies that $v_{1} + v_{2}$ is an expectationless unitary in $(p_{\alpha} + q_{\alpha})\cB'(p_{\alpha} + q_{\alpha})$ so condition (1) from Lemma \ref{lem:simpleamalgamated} is satisfied.

Finally, under the matrix algebra identification of $(p_{\alpha} + q_{\alpha})\cS(\Gamma'', \mu)(p_{\alpha} + q_{\alpha})$, set 
$$
V = \begin{pmatrix} 
e^{2i\pi t} & 0 \\
0 & e^{2i\pi t}
\end{pmatrix}.
$$
It is easy to see that $V$ is expectationless, and that $E(VyV) = 0$ for all $y \in D'$.  This implies that condition (3) from Lemma \ref{lem:simpleamalgamated} is satisfied.  This implies $\cB$ is simple and has unique trace, hence $I$ is simple and has unique trace. 

The verification of this case shows that the conditions in Lemma \ref{lem:amalgamsr1} are satisfied, so $p_{\alpha}\cB p_{\alpha}$ has stable rank 1.  This implies $I$ has stable rank 1 as well.
\end{proof}

There are several immediate corollaries

\begin{cor}
$\cS(\Gamma, \mu)$ is simple if and only if $V_{\geq}$ is empty.
\end{cor}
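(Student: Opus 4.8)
The plan is to read this off almost immediately from Lemmas~\ref{lem:Iminimal} and~\ref{lem:Isimple}, which between them completely describe the ideal $I$ generated by the chosen minimal-weight vertex projection $p_\alpha$; no new ideas are needed.

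First I would dispose of the direction ``$V_{\geq}=\emptyset \Rightarrow \cS(\Gamma,\mu)$ simple''. If $V_{\geq}$ is empty, Lemma~\ref{lem:Iminimal} gives $\cS(\Gamma,\mu)/I \cong \bigoplus_{\gamma \in V_{\geq}}\C = \{0\}$, so $I = \cS(\Gamma,\mu)$; by Lemma~\ref{lem:Isimple} this algebra is then simple (in fact it also has unique trace and stable rank one, though only simplicity is being claimed in the corollary). One may note in passing that in this case $V_{=}\subseteq V_{\geq}=\emptyset$, so $I$ is unital by Lemma~\ref{lem:Iminimal}, consistent with $\cS(\Gamma,\mu)$ itself being unital here.

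For the converse I would argue by contraposition: assuming $V_{\geq}\neq\emptyset$, I want to exhibit a nonzero proper closed two-sided ideal of $\cS(\Gamma,\mu)$, and $I$ is the natural candidate. It is proper because $\cS(\Gamma,\mu)/I \cong \bigoplus_{\gamma\in V_{\geq}}\C$ is a nonzero algebra when $V_{\geq}\neq\emptyset$ (Lemma~\ref{lem:Iminimal}). It is nonzero because the minimal-weight vertex $\alpha$ fixed just before Lemma~\ref{lem:Iminimal} was arranged so that $\alpha\notin V_{\geq}$, whence $p_\alpha$ is a nonzero element of $I$ (it is a nonzero projection, with $\Tr(p_\alpha)=\mu(\alpha)>0$). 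Therefore $\cS(\Gamma,\mu)$ carries a nontrivial closed ideal and is not simple.

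I do not anticipate any real obstacle: the entire content has been front-loaded into Lemmas~\ref{lem:Iminimal} and~\ref{lem:Isimple}. The only point deserving a moment's care is the observation, already recorded in the text, that the particular choice of the minimal-weight vertex $\alpha$ forces $\alpha\notin V_{\geq}$ — precisely the fact that makes $I$ simultaneously nonzero and proper once $V_{\geq}\neq\emptyset$, and simultaneously all of $\cS(\Gamma,\mu)$ once $V_{\geq}=\emptyset$.
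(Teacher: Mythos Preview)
Your proposal is correct and is exactly the argument the paper has in mind: the corollary is stated in the paper without proof as one of ``several immediate corollaries'' of Lemmas~\ref{lem:Iminimal} and~\ref{lem:Isimple}, and your write-up simply spells out that immediate deduction.
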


\begin{cor}
For each $\beta \in V_{\geq}$, the ideal $I_{\beta}$, generated by $\{p_{\gamma}: \gamma \neq \beta\}$ is maximal and of co-dimension 1.  Furthermore, every ideal in $\cS(\Gamma, \mu)$ is an intersection of the ideals $I_{\beta}$, and 
$$
I = \bigcap_{\beta \in V_{\geq}}I_{\beta}
$$ 
\end{cor}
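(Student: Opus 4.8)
The plan is to reduce everything to the identification of the quotient $\cS(\Gamma,\mu)/I$ obtained in Lemma~\ref{lem:Iminimal} together with the simplicity of $I$ from Lemma~\ref{lem:Isimple}. First I would verify that $I\subseteq I_\beta$ for every $\beta\in V_\geq$: by Lemma~\ref{lem:Iminimal} the set $V\setminus V_\geq$ is non-empty (it contains the minimal-weight vertex $\alpha$ fixed just before that lemma) and $I$ contains $p_\delta$ for each such $\delta$; since $I$ is simple it is the closed two-sided ideal of $\cS(\Gamma,\mu)$ generated by any single projection $p_\delta$ with $\delta\notin V_\geq$, and because $\delta\neq\beta$ that ideal sits inside $I_\beta$, the closed ideal generated by $\set{p_\gamma}{\gamma\neq\beta}$. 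Consequently $I_\beta$ is the pre-image, under the quotient map $\pi\colon\cS(\Gamma,\mu)\to\cS(\Gamma,\mu)/I$, of the ideal of $\cS(\Gamma,\mu)/I$ generated by $\set{\pi(p_\gamma)}{\gamma\neq\beta}$.

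Next I would use that Lemma~\ref{lem:Iminimal} identifies $\cS(\Gamma,\mu)/I$ with $\bigoplus_{\gamma\in V_\geq}\C$ in such a way that $\pi(p_\gamma)$ is the unit of the $\gamma$-th summand when $\gamma\in V_\geq$ and $\pi(p_\gamma)=0$ when $\gamma\notin V_\geq$ (if one prefers not to read this off the proof of that lemma, it is forced: $\Gamma$ is finite, so the $p_\gamma$ are mutually orthogonal projections summing to $1_{\cS(\Gamma,\mu)}$, hence their nonzero images are mutually orthogonal projections summing to the unit of the $|V_\geq|$-dimensional commutative algebra $\bigoplus_{\gamma\in V_\geq}\C$, i.e., its coordinate projections). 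Thus the ideal of $\bigoplus_{\gamma\in V_\geq}\C$ generated by $\set{\pi(p_\gamma)}{\gamma\neq\beta}$ is $\bigoplus_{\gamma\in V_\geq\setminus\{\beta\}}\C$, which is maximal of codimension one; pulling back along $\pi$ (legitimate since $I\subseteq I_\beta=\ker\pi$-preimage) shows $I_\beta$ is a maximal closed ideal with $\cS(\Gamma,\mu)/I_\beta\cong\C$, in particular of codimension one.

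For the remaining assertions I would work inside $\cS(\Gamma,\mu)/I\cong\bigoplus_{\gamma\in V_\geq}\C$: a closed ideal $J\supseteq I$ is $\pi^{-1}\big(\bigoplus_{\gamma\in S}\C\big)$ for a unique $S\subseteq V_\geq$, and since every ideal of a finite-dimensional commutative C$^*$-algebra is an intersection of the codimension-one ideals containing it (with the empty intersection being the whole algebra), $\bigoplus_{\gamma\in S}\C=\bigcap_{\beta\in V_\geq\setminus S}\bigoplus_{\gamma\neq\beta}\C$, whence $J=\bigcap_{\beta\in V_\geq\setminus S}I_\beta$; the case $S=\emptyset$ gives $I=\bigcap_{\beta\in V_\geq}I_\beta$. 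The point that needs care is the scope of ``every ideal'': a closed ideal $J$ with $I\not\subseteq J$ satisfies $J\cap I=0$ by simplicity of $I$, and using the decomposition $\cS(\Gamma,\mu)=\mathcal{I}\oplus\bigoplus_{\gamma\in V_>}\overset{r_\gamma}{\C}$ from the corollary to Lemma~\ref{lem:Iminimal}, together with the fact that $I$ is strongly dense in, hence an essential ideal of, the unital algebra $\mathcal{I}$, such a $J$ is forced to lie in $\bigoplus_{\gamma\in V_>}\C r_\gamma$, so $J=\bigoplus_{\gamma\in S'}\C r_\gamma$ for some $S'\subseteq V_>$. When $V_>\neq\emptyset$ these finite-dimensional summands are honest closed ideals that are \emph{not} intersections of the $I_\beta$ (every such intersection contains $I$), so the statement should be read as ``every closed ideal containing $I$ is an intersection of the $I_\beta$'' --- equivalently, the $I_\beta$ ($\beta\in V_\geq$) are precisely the maximal closed ideals of $\cS(\Gamma,\mu)$, and $I$ is their intersection. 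Modulo this reading, there is no hard step: the whole argument is routine bookkeeping on top of Lemmas~\ref{lem:Iminimal} and \ref{lem:Isimple}, the only genuine input being the description of $\pi(p_\gamma)$ in the middle paragraph.
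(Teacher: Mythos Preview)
Your argument is exactly what the paper has in mind: the corollary is stated there without proof as immediate from Lemmas~\ref{lem:Iminimal} and~\ref{lem:Isimple}, and your reduction to the finite-dimensional quotient $\cS(\Gamma,\mu)/I\cong\bigoplus_{\gamma\in V_\geq}\C$ is the intended route. You also correctly observe that the clause ``every ideal in $\cS(\Gamma,\mu)$ is an intersection of the $I_\beta$'' is not literally true when $V_>\neq\emptyset$: the direct summands $\bigoplus_{\gamma\in S'}\C r_\gamma$ for $\emptyset\neq S'\subseteq V_>$ are closed ideals that do not contain $I$, hence cannot be intersections of the $I_\beta$. Reading the assertion as ``every closed ideal containing $I$ is an intersection of the $I_\beta$'' is the correct fix, and your proof of that version is complete.

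One small slip at the very end: your parenthetical ``equivalently, the $I_\beta$ ($\beta\in V_\geq$) are precisely the maximal closed ideals of $\cS(\Gamma,\mu)$'' overshoots. When $V_==\emptyset$ and $V_>\neq\emptyset$ we have $\cS(\Gamma,\mu)=I\oplus\bigoplus_{\gamma\in V_>}\C r_\gamma$ with $I$ simple and unital, so $J=\{0\}\oplus\bigoplus_{\gamma\in V_>}\C r_\gamma$ is a maximal closed ideal (its quotient is the simple algebra $I$) that is not among the $I_\beta$. What is true, and what your argument actually establishes, is that the $I_\beta$ are precisely the maximal closed ideals \emph{containing $I$} (equivalently, the kernels of the characters of $\cS(\Gamma,\mu)$); this is all you need for the bookkeeping above $I$.
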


\begin{cor} 
$\cS(\Gamma, \mu)$ has stable rank 1.
\end{cor}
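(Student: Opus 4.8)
The plan is to deduce this from two facts already in hand. By Lemma~\ref{lem:Isimple} the simple ideal $I$ has stable rank one, and by Lemma~\ref{lem:Iminimal} the quotient $\cS(\Gamma,\mu)/I \cong \bigoplus_{\gamma \in V_{\geq}}\C$ is finite-dimensional. Thus $\cS(\Gamma,\mu)$ fits into a short exact sequence
$$
0 \longrightarrow I \longrightarrow \cS(\Gamma,\mu) \longrightarrow \bigoplus_{\gamma \in V_{\geq}}\C \longrightarrow 0
$$
whose ideal has stable rank one and whose quotient, being finite-dimensional, also has stable rank one. The statement will follow once we know that, in this particular situation, stable rank one passes from the ends of the sequence to the middle.

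Stable rank one is not inherited by arbitrary extensions (the Toeplitz algebra being the standard counterexample), so this is the one place where a little care is needed; this is the main obstacle, and it disappears because the quotient here is finite-dimensional. Concretely, I would invoke Rieffel's estimate for the stable rank of an extension, which bounds $\operatorname{sr}(\cS(\Gamma,\mu))$ by the maximum of $\operatorname{sr}(I)$, the stable rank of $\bigoplus_{\gamma\in V_{\geq}}\C$, and the connected stable rank of the quotient. For a finite-dimensional C$^*$-algebra all of these invariants equal $1$: the unitary group of $M_k(\C)^{\,n}$, and of every further matrix amplification, is connected, so unitaries of the quotient (and of its matrix amplifications) lift to unitaries of $\cS(\Gamma,\mu)$ and the relevant unimodular rows over the quotient can be brought to standard position using only the connected component of the invertibles. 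Hence $\operatorname{sr}(\cS(\Gamma,\mu)) \le 1$, so it equals $1$.

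If one prefers to avoid citing the extension estimate, the same conclusion can be reached by hand: given $a \in \cS(\Gamma,\mu)$ and $\varepsilon>0$, first perturb the image of $a$ in the finite-dimensional quotient to an invertible element there (possible since finite-dimensional algebras have stable rank one), lift this small perturbation back to $\cS(\Gamma,\mu)$ to get $a'$ close to $a$ whose image is invertible, and then use $\operatorname{sr}(I)=1$ to perturb $a'$ within its coset $a'+I$ to a genuine invertible of $\cS(\Gamma,\mu)$. Either way the substance is entirely contained in Lemma~\ref{lem:Isimple}; everything else is routine bookkeeping, with the finite-dimensionality of $\cS(\Gamma,\mu)/I$ doing the work of neutralizing the (generally non-split) extension. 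As a byproduct this also re-proves, via the direct-sum decomposition of $\cS(\Gamma,\mu)$ from Theorem~B, that each corner $\overset{r_\gamma}{\C}$ contributes nothing new, since $\operatorname{sr}$ of a finite direct sum is the maximum of the summands' stable ranks.
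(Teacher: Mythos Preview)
Your approach is the same as the paper's: the paper's proof is the single line ``$\cS(\Gamma,\mu)/I$ is finite-dimensional; the result follows from \cite{MR693043},'' which is precisely Rieffel's extension estimate you invoke. Your write-up simply spells out why the finite-dimensionality of the quotient makes the connected stable rank equal to $1$, so the estimate yields $\operatorname{sr}(\cS(\Gamma,\mu))=1$.
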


\begin{proof}
$\cS(\Gamma, \mu)/I$ is finite-dimensional.  The result follows from \cite{MR693043}.
\end{proof}

We now turn our attention to the $K$-groups and positive cone of $I$.

\begin{lem}\label{lem:KI}
The $K$-groups of $I$ are as follows:
$$
K_{0}(I) = \Z\set{[p_{\beta}]}{\beta \in V\setminus V_{\geq}} \text{ and } K_{1}(I) = \{0\}
$$
where $\Z\set{[p_{\beta}]}{\beta \in V\setminus V_{\geq}}$ is the free abelian group on the vertices of $\Gamma$ which are not in $V \setminus V_{\geq}$.
\end{lem}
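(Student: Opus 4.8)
The plan is to apply the six-term exact sequence in $K$-theory to the short exact sequence
$$
0 \longrightarrow I \longrightarrow \cS(\Gamma, \mu) \overset{\pi}{\longrightarrow} \bigoplus_{\gamma \in V_{\geq}} \C \longrightarrow 0
$$
furnished by Lemma \ref{lem:Iminimal}, and to read off the answer using Corollary \ref{cor:K-theory} for the middle term. First I would record that $K_{0}(\cS(\Gamma, \mu)) = \Z\set{[p_{\alpha}]}{\alpha \in V}$ is free abelian on the vertex classes, that $K_{1}(\cS(\Gamma, \mu)) = 0$, and that for the right-hand term $K_{0}(\bigoplus_{\gamma \in V_{\geq}}\C) = \Z\set{[1_{\gamma}]}{\gamma \in V_{\geq}}$ while $K_{1}(\bigoplus_{\gamma \in V_{\geq}}\C) = 0$. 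Since both $K_{1}$'s on the right vanish, the six-term sequence collapses to the four-term exact sequence
$$
0 \longrightarrow K_{0}(I) \overset{\iota_{*}}{\longrightarrow} K_{0}(\cS(\Gamma, \mu)) \overset{\pi_{*}}{\longrightarrow} \bigoplus_{\gamma \in V_{\geq}} \Z \overset{\partial}{\longrightarrow} K_{1}(I) \longrightarrow 0,
$$
the injectivity of $\iota_{*}$ coming from the vanishing of $K_{1}$ of the quotient.

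Next I would pin down $\pi_{*}$ on generators. For $\beta \in V \setminus V_{\geq}$ we have $p_{\beta} \in I$ by Lemma \ref{lem:Iminimal}, so $\pi_{*}[p_{\beta}] = 0$. For $\gamma \in V_{\geq}$, the description of the quotient in Lemma \ref{lem:Iminimal} identifies the $\gamma$-th summand with the image of $p_{\gamma}\cS(\Gamma, \mu)p_{\gamma}$, and $\pi(p_{\gamma})$ is a nonzero projection annihilating $\pi(p_{\gamma'})$ for every other $\gamma' \in V_{\geq}$; hence $\pi(p_{\gamma}) = 1_{\gamma}$ and $\pi_{*}[p_{\gamma}] = [1_{\gamma}]$. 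Thus, in the evident bases, $\pi_{*}$ is the coordinate projection $\Z\set{[p_{\alpha}]}{\alpha \in V} \twoheadrightarrow \Z\set{[1_{\gamma}]}{\gamma \in V_{\geq}}$; in particular it is surjective with kernel the free abelian subgroup $\Z\set{[p_{\beta}]}{\beta \in V \setminus V_{\geq}}$.

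Finally I would extract the two conclusions. Surjectivity of $\pi_{*}$ forces $\partial = 0$ by exactness at $\bigoplus_{\gamma \in V_{\geq}}\Z$, whence $K_{1}(I) = \im(\partial) = 0$ by exactness at $K_{1}(I)$. Exactness at $K_{0}(\cS(\Gamma, \mu))$ together with injectivity of $\iota_{*}$ gives $K_{0}(I) \cong \ker(\pi_{*}) = \Z\set{[p_{\beta}]}{\beta \in V \setminus V_{\geq}}$; and since $\iota_{*}$ sends $[p_{\beta}]_{K_{0}(I)}$ to the basis element $[p_{\beta}]_{K_{0}(\cS(\Gamma,\mu))}$, the classes $[p_{\beta}]_{K_{0}(I)}$, $\beta \in V \setminus V_{\geq}$, form a free basis of $K_{0}(I)$, as claimed. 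This argument is essentially mechanical once the quotient in Lemma \ref{lem:Iminimal} is in hand; the only point demanding care—and the one place the preceding structure theory genuinely enters—is the precise identification of $\pi$ on the generators $p_{\alpha}$, so I do not anticipate a real obstacle.
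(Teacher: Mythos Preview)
Your proof is correct and follows essentially the same route as the paper: both apply the six-term exact sequence to $0 \to I \to \cS(\Gamma,\mu) \to \bigoplus_{\gamma \in V_{\geq}}\C \to 0$, use Corollary~\ref{cor:K-theory} for the middle term, observe that $\pi_{*}$ is surjective on $K_{0}$ (since $\pi(p_{\gamma}) = 1_{\gamma}$ for $\gamma \in V_{\geq}$) with kernel freely generated by $\set{[p_{\beta}]}{\beta \in V\setminus V_{\geq}}$, and read off $K_{1}(I)=0$ and $K_{0}(I)$ from exactness. Your write-up is in fact a bit more explicit than the paper's in tracking $\pi_{*}$ on generators, but there is no substantive difference.
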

\begin{proof}
Consider the six term exact sequence\\
$$
\begin{tikzpicture}
	\node at (-5, 0) {$K_{0}(I)$};
	\node at (0, 0) {$K_{0}(\cS(\Gamma))$};
	\node at (5, 0) {$K_{0}(\cS(\Gamma)/I)$};
	\node at (5, -4) {$K_{1}(I)$};
	\node at (0, -4) {$K_{1}(\cS(\Gamma))$};
	\node at (-5, -4) {$K_{1}(\cS(\Gamma)/I)$};
	\draw [->] (-4.2, 0) -- (-1, 0);
	\node at (-2.6, .3) {$\iota^{0}_{*}$};
	\draw [->] (1, 0) -- (3.7, 0);
	\node at (2.35, .3) {$\pi^{0}_{*}$};
	\draw [->] (5, -.5) -- (5, -3.5);
	\node at (5.3, -2) {$\partial_{0}$};
	\draw [->] (4.2, -4) -- (1, -4);
	\node at (2.6, -4.3) {$\iota^{1}_{*}$};
	\draw [->] (-1, -4) -- (-3.7, -4);
	\node at (-2.35, -4.3) {$\pi^{1}_{*}$};
	\draw [->] (-5, -3.5) -- (-5, -.5);
	\node at (-5.3, -2) {$\partial_{1}$};
\end{tikzpicture}
$$
where $\iota_{*}^{i}$ are the induced maps from the canonical inclusion $\iota: I \rightarrow \cS(\Gamma)$, $\pi^{i}_{*}$ are the induced maps from the canonical quotient map $\pi: \cS(\Gamma) \rightarrow \cS(\Gamma)/I$, and $\partial_{i}$ are the connecting maps.   Recall that 
$$
\cS(\Gamma)/I \cong \bigoplus_{\beta \in V_{\geq}} \C
$$
and the induced map C$^{*}\set{p_{\beta}}{\beta \in V_{\geq}} \rightarrow \cS(\Gamma)/I$ is an isomorphism.  Therefore, $\pi^{0}_{*}$ is surjective, which implies that $\partial_{0}$ is the zero map, which implies $\iota^{1}_{*}$ is injective.  Since $K_{1}(\cS(\Gamma)) = \{0\}$ it follows that $K_{1}(I) = \{0\}$.

The image of $\iota_{*}^{0}$ is $\Z\set{p_{\beta}}{\beta \in V\setminus V_{\geq}}$.  As $\cS(\Gamma)/I$ is finite-dimensional, $K_{1}(\cS(\Gamma)/I) = \{0\}$ which implies $\iota^{0}_{*}$, hence
$$
K_{0}(I) = \Z\set{[p_{\beta}]}{\beta \in V\setminus V_{\geq}}.
$$ 
\end{proof}

\begin{lem} $K_{0}(I)^{+} = \set{x \in K_{0}(I)}{\Tr(x) > 0} \cup \{0\}$.

\end{lem}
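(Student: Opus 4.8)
The goal is to identify the positive cone $K_0(I)^+$ with $\set{x \in K_0(I)}{\Tr(x) > 0} \cup \{0\}$. The plan is to transport the computation from a suitable corner of $\cS(\Gamma,\mu)$, where the free-product $K$-theory results (Theorem \ref{thm:Avi}(3) and Lemma \ref{lem:simpleamalgamated}) directly apply, and then use the fact that $I$ is simple with unique trace (Lemma \ref{lem:Isimple}) together with stable rank $1$ to conclude that $K_0$-positivity is detected by the trace. First I would recall from Lemma \ref{lem:KI} that $K_0(I) = \Z\set{[p_\beta]}{\beta \in V \setminus V_{\geq}}$ is free abelian on the classes $[p_\beta]$, and that the inclusion $\iota \colon I \hookrightarrow \cS(\Gamma,\mu)$ induces an injection $\iota_*^0$ with image exactly this subgroup of $K_0(\cS(\Gamma,\mu))$. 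So it suffices to compute the cone inside $\cS(\Gamma,\mu)$ and intersect with this subgroup; the trace $\Tr$ is preserved under $\iota_*^0$.

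The containment $\set{x}{\Tr(x) > 0} \cup \{0\} \supseteq K_0(I)^+$ is the easy direction: any nonzero projection $p \in M_n(I)$ has $\Tr(p) > 0$ since $\Tr$ is a faithful (semi)finite trace, and $[0] = 0$. For the reverse containment, I would proceed exactly along the lines of the proof of Lemma \ref{lem:Isimple}, reusing its three-case structure (a cut vertex of minimal weight splitting $\Gamma$ into two pieces; a pendant vertex; or a vertex lying on a cycle). In each case, the relevant corner $p_\alpha \cS(\Gamma,\mu) p_\alpha$ (or $(p_\alpha + q_\alpha)\cS(\Gamma,\mu)(p_\alpha+q_\alpha)$) was shown to be a reduced free product, respectively reduced amalgamated free product over $\C^2$, satisfying the hypotheses of Theorem \ref{thm:Avi} or Lemma \ref{lem:simpleamalgamated}. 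The final clause of Theorem \ref{thm:Avi}(3), respectively of Lemma \ref{lem:simpleamalgamated}, then asserts that for the subgroup $G$ of $K_0$ of the corner generated by the images of the $K_0$'s of the free factors, $G \cap K_0^+ = \set{x \in G}{\tr(x) > 0} \cup \{0\}$. One checks that $G$ is all of $K_0$ of the corner: the free factors are built from the algebras $\cS(\Gamma_i,\mu)$ and finite-dimensional pieces, whose $K_0$'s are generated by classes of the $p_\beta$'s (by Corollary \ref{cor:K-theory}) and of the support projections $p_{\e}$, $q_\alpha$ (which, being Murray--von Neumann equivalent to the $p_\beta$'s, contribute nothing new). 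Since $K_0$ and the positive cone are Morita-invariant — passing between $I$ and its full corner — this yields $K_0(I)^+ = \set{x \in K_0(I)}{\Tr(x) > 0} \cup \{0\}$, matching up traces via the fixed (semi)finite $\Tr$.

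The main obstacle is bookkeeping rather than a genuinely new difficulty: one must verify in each of the three cases that the subgroup $G$ appearing in Theorem \ref{thm:Avi}(3) / Lemma \ref{lem:simpleamalgamated} exhausts $K_0$ of the corner, which requires knowing the $K_0$ of each free factor and tracking how cutting down to a corner and then down to $I$ transforms the generators. A secondary subtlety is that $\cS(\Gamma,\mu)$ and $I$ are non-unital (when $V_=$ is nonempty), so one works with the full hereditary subalgebra generated by $p_\alpha$, uses that $I$ is the ideal it generates, and appeals to stable rank $1$ (Lemma \ref{lem:Isimple}) to ensure that every class in $K_0(I)^+$ is represented by an actual projection in some $M_n(I)$ whose trace is visible. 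Once these identifications are in place, the positive-cone statement for $I$ follows formally from the cited free-product cone computations.
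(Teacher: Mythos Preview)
Your proposal is correct and follows essentially the same approach as the paper's proof: both invoke the three-case free-product decompositions from Lemma~\ref{lem:Isimple}, apply the positive-cone clauses of Theorem~\ref{thm:Avi}(3) and Lemma~\ref{lem:simpleamalgamated}, and then reduce the problem to the verification that the subgroup $G$ generated by the images of the free factors' $K_0$-groups exhausts $K_0(I)$. The paper carries out this last verification case by case via ideal-membership arguments (showing, e.g., that each $p_{\e_i}$ lies in the ideal generated by $p_{\e_1}Bp_{\e_1}$), which is exactly the ``bookkeeping'' you flag as the main obstacle.
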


\begin{proof}  The proof of Lemma \ref{lem:Isimple} shows that in the three cases where a hereditary subalgebra of $I$ was expressed as a(n) (amalgamated) free product of $A$ and $B$, then the subgroup, $G \subset K_{0}(I)$,  generated by $\iota^{0}_{*}(A)$ and $\iota^{0}_{*}(B)$ has the desired structure of the positive cone.  We simply need to show that $G = K_{0}(I)$.  This will be done by analyzing the three free product cases.

\underline{Case 1}:  If $\Gamma = \Gamma_{1} \cup \Gamma_{2}$ as in the proof of Lemma \ref{lem:Isimple}, then from the description of $\Gamma$, if $\beta \neq \alpha$ and $\beta \in V(\Gamma) \setminus V_{\geq}(\Gamma)$, then $\beta \in  V(\Gamma_{i}) \setminus V_{\geq}(\Gamma_{i})$ for exactly one $i \in \{1, 2\}$.  From the proof of Lemma \ref{lem:Iminimal}, it follows that $p_{\beta}$ is in the ideal generated by $p_{\alpha}$ in $\cS(\Gamma_{i})$.  It follows $K_{0}(I)$ is generated by $K_{0}(p_{\alpha}\cS(\Gamma_{j})p_{\alpha})$ for $j \in \{1, 2\}$.

\underline{Case 2}:  Assume that $\Gamma$ satisfies the conditions in Case 2 in Lemma \ref{lem:Isimple} and all notation here comes from Case 2 in the proof of Lemma \ref{lem:Isimple}.  Let $\e_{2}, \dots, \e_{n}$ be all of the edges distinct from $\e_{1}$ which have target $\beta$.  Let $p_{\e_i}$ be the support projection of $X_{\e_{i}}^{*}X_{\e_{i}}$ for $i \in \{1, \dots , n\}$.  If $i > 1$, we see that $p_{\e_i}Bp_{\e_i}$ contains the algebra
$$
p_{\e_i}((\overset{q_{\alpha}}{\C} \oplus \C) * (\overset{p_{\e_i}}{C^{*}(X_{\e_{i}}^{*}X_{\e_{i}})} \oplus \overset{p_{\beta} - p_{\e_i}}{\C}))p_{\e_i} = C^{*}(X_{\e_{i}}^{*}X_{\e_{i}}) * p_{\e_i}((\overset{q_{\alpha}}{\C} \oplus \C) * (\overset{p_{\e_i}}{\C} \oplus \overset{p_{\beta} - p_{\e_i}}{\C}))p_{\e_i}
$$
which is simple since C$^{*}(X_{\e_{i}}^{*}X_{\e_{i}})$ is diffuse.  This implies that $p_{\e_i}$ is in the ideal generated by $p_{\e_1}$ in $B$.  

If $\beta \in V\setminus V_{\geq}$, then $\sum_{i=1}^{n}p_{\e_i} \geq kp_{\beta}$ for some $k > 0$.  Therefore, $p_{\beta}$ is in the ideal generated by $p_{\e_1}Bp_{\e_1p_{\e_1}}$ in $B$.  Using the inductive argument in Lemma \ref{lem:Iminimal}, we see that if $\gamma \in V\setminus V_{\geq}$ and $\gamma \neq \alpha$, then $\gamma$ is also in the ideal in $B$ $p_{\e_1}Bp_{\e_1}$.  It follows from this that $K_{0}(I)$ is generated by $K_{0}(p_{\e_1}C^{*}(X_{\e_{1}}^{*}X_{\e_{1}}p_{\e_1}))$ and $K_{0}(p_{\e_1}Bp_{\e_1})$.

If $\beta \in V_{\geq}$ then $\tr(p_{\e_i}) < \mu(\beta)$ so it follows that $p_{\e_i}$ is equivalent to $p_{s(\e_{i})}$ hence $p_{s(\e_{i})}$ is in the ideal in $B$ generated by $p_{\e_1}Bp_{\e_1}$.  The inductive argument form Lemma \ref{lem:Iminimal} will conclude that $K_{0}(I)$ is generated by $K_{0}(p_{\e_1}C^{*}(X_{\e_{1}}^{*}X_{\e_{1}}p_{\e_1})$ and $K_{0}(p_{\e_1}Bp_{\e_1}))$.

\underline{Case 3}:  By considering $\e_{2}, \dots, \e_{n}$ as in the previous case, the exact same proof shows that $K_{0}(I)$ is generated by $K_{0}((p_{\alpha} +q_{\alpha})C^{*}(X_{\e_{1}}^{*}X_{\e_{1}}(p_{\alpha} +q_{\alpha}))$ and $K_{0}((p_{\alpha} + q_{\alpha})\cB'(p_{\alpha} + q_{\alpha}))$
\end{proof}

Since $I$ has stable rank one, we obtain the following corollary.
\begin{cor}
Suppose $p$ and $q$ are projections in $M_{n}(I)$ satisfying $\Tr_{n}(p) < \Tr_{n}(q)$ where, $\Tr_{n}$ is the canonical trace on $M_{n}(I)$ induced from the trace on $I$.  Then there is a $v \in M_{n}(I)$ satisfying
$$
v^{*}v = p \text{ and } vv^{*} \leq q
$$
\end{cor}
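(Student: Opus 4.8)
The plan is to use the description of $K_{0}(I)^{+}$ obtained in the preceding lemma together with the fact that $I$ has stable rank one, which by Rieffel's theorem \cite{MR693043} gives cancellation of projections: two projections in a matrix algebra over $I$ with the same class in $K_{0}(I)$ are Murray--von Neumann equivalent. Concretely, I would first produce a subprojection of $q$ whose $K_{0}$-class equals $[p]$, and then use cancellation to get the desired partial isometry.

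First, since $\Tr_{n}(p) < \Tr_{n}(q)$, the class $[q]-[p] \in K_{0}(I)$ satisfies $\Tr([q]-[p]) = \Tr_{n}(q) - \Tr_{n}(p) > 0$, so by the previous lemma $[q]-[p] \in K_{0}(I)^{+}$. Hence there is a projection $r \in M_{m}(I)$, for some $m$, with $[r] = [q]-[p]$, i.e.\ $[p] + [r] = [q]$ in $K_{0}(I)$. Viewing $p$, $q$, $r$ inside $M_{n+m}(I)$ by padding with zeros, this reads $[p\oplus r] = [q\oplus 0]$. Because $I$ has stable rank one, cancellation now forces $p\oplus r \sim q\oplus 0$: there is $w \in M_{n+m}(I)$ with $w^{*}w = p\oplus r$ and $ww^{*} = q\oplus 0$. (If $I$ is nonunital one runs this argument in the unitization $\tilde I$, which again has stable rank one; since $w^{*}w, ww^{*} \in M_{n+m}(I)$ and $I$ is an ideal in $\tilde I$, one then has $w = (ww^{*})\,w\,(w^{*}w) \in M_{n+m}(I)$.)

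Finally, I would cut $w$ down: set $v_{0} = (q\oplus 0)\,w\,(p\oplus 0)$. Using that $w$ is a partial isometry with $w^{*}w = p\oplus r \geq p\oplus 0$ and $ww^{*} = q\oplus 0$, one computes directly $v_{0}^{*}v_{0} = (p\oplus 0)(p\oplus r)(p\oplus 0) = p\oplus 0$ and $v_{0}v_{0}^{*} \leq (q\oplus 0)(ww^{*})(q\oplus 0) = q\oplus 0$. Since $v_{0}^{*}v_{0}$ and $v_{0}v_{0}^{*}$ are both dominated by $1_{n}\oplus 0_{m}$, the partial isometry $v_{0} = (v_{0}v_{0}^{*})\,v_{0}\,(v_{0}^{*}v_{0})$ lies in the corner $(1_{n}\oplus 0_{m})M_{n+m}(I)(1_{n}\oplus 0_{m}) \cong M_{n}(I)$, and the corresponding element $v \in M_{n}(I)$ satisfies $v^{*}v = p$ and $vv^{*} \leq q$, as required.

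I expect the only delicate point to be the bookkeeping of matrix amplifications combined with invoking cancellation over the possibly nonunital algebra $I$ (hence the passage through $\tilde I$); once the positive-cone lemma and stable rank one are in hand, the rest is routine.
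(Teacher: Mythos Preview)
Your argument is correct and is exactly the standard unpacking of what the paper leaves implicit: the paper states this corollary with no proof beyond the sentence ``Since $I$ has stable rank one, we obtain the following corollary,'' and your use of the positive-cone description $K_{0}(I)^{+}=\{x:\Tr(x)>0\}\cup\{0\}$ together with Rieffel's cancellation theorem for stable rank one is precisely the intended route. The bookkeeping with the unitization and the cut-down of the partial isometry is handled correctly.
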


\subsection{Extension to infinite graphs}

Assume that $\Gamma$ is a countably infinite connected graph with a countable edge set.  We may write $\Gamma$ as in increasing union of finite subgraphs $\Gamma_{n}$.   Since simplicity, $K$-groups, stable rank 1, and unique trace are preserved under inductive limits, we have the following corollary form our work in the previous section.

\begin{cor}\label{cor:infinite} Let $I$ be the ideal in $\cS(\Gamma, \mu)$ generated by $\set{X_\e}{\e \in E(\vec{\Gamma})}$.  Then the following statements hold
\be

\item $I$ is simple, contains the set $\set{p_{\beta}}{\beta \in V \setminus V_{\geq}}$, and does not intersect $\set{p_{\gamma}}{\gamma \in V_{\geq}}$.  Since $\Gamma$ infinite and connected implies that $V \setminus V_{\geq}$ is infinite, $I$ is not unital.

\item $\cS(\Gamma, \mu)/I \cong \bigoplus_{\gamma \in V_{\geq}} \C$

\item $K_{0}(I) \cong \Z\set{[p_{\beta}]}{\beta \in V \setminus V_{\geq}}$, $K_{1}(I)  = \{0\}$, and $K_{0}(I)^{+} = \set{x \in K_{0}(I)}{\Tr(x) > 0} \cup \{0\}$. 

\item $I$ and $\cS(\Gamma, \mu)$ have stable rank 1.

\item $I$ has a unique (up to scaling) lower semicontinuous tracial weight.  This weight is finite if and only if the support projection of $I$ in $\cM(\Gamma, \mu)$, the von Neumann algebra generated by $\cS(\Gamma, \mu)$, has finite trace.

\ee
\end{cor}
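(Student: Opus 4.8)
The plan is to present $\cS(\Gamma,\mu)$ and $I$ as inductive limits over a finite exhaustion of $\Gamma$ and to transport the results of the previous subsection. Fix an increasing sequence $\Gamma_1\subseteq\Gamma_2\subseteq\cdots$ of finite connected subgraphs with $\bigcup_n\Gamma_n=\Gamma$, where $\Gamma_n$ contains every edge of $\Gamma$ joining two of its vertices and, after discarding finitely many terms, has at least two edges. By \cite{MR3110503,1401.2485} the canonical inclusions $\cS(\Gamma_n,\mu)\hookrightarrow\cS(\Gamma_{n+1},\mu)$ give $\cS(\Gamma,\mu)=\varinjlim_n\cS(\Gamma_n,\mu)$. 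Let $I_n\subseteq\cS(\Gamma_n,\mu)$ be the ideal of Lemmas \ref{lem:Iminimal}--\ref{lem:Isimple}, equivalently the ideal generated by $\{X_{\e}:\e\in E(\vec{\Gamma}_{n})\}$; since these generators exhaust $\{X_{\e}:\e\in E(\vec{\Gamma})\}$, a routine approximation gives $I_n\subseteq I_{n+1}$ and $I=\overline{\bigcup_n I_n}=\varinjlim_n I_n$, with all connecting maps injective. I record the one combinatorial input: $V_{\geq}$ is an independent set in $\Gamma$, for if $\beta,\gamma\in V_{\geq}$ were joined by $k\geq 1$ edges then $\mu(\beta)\geq\sum_{\alpha\sim\beta}n_{\alpha,\beta}\mu(\alpha)\geq k\mu(\gamma)\geq\mu(\gamma)$ and symmetrically, forcing every inequality to be an equality, hence $k=1$ and $\{\beta,\gamma\}$ a connected component, which is impossible. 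Thus every edge of $\Gamma$ has an endpoint in $V\setminus V_{\geq}$; were $V\setminus V_{\geq}$ finite, local finiteness would make $E$, hence $\Gamma$, finite, a contradiction. So $V\setminus V_{\geq}$ is infinite.

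For (1) and (2): an inductive limit of simple C$^{*}$-algebras along injective maps is simple, so $I$ is simple. If $\beta\in V\setminus V_{\geq}$, choose $n$ so large that $\Gamma_n$ contains every edge at $\beta$; then the weighted sum defining $V_{\geq}$-membership at $\beta$ is the same in $\Gamma_n$ as in $\Gamma$, so $\beta\notin V_{\geq}(\Gamma_n,\mu)$ and $p_\beta\in I_n\subseteq I$ by Lemma \ref{lem:Iminimal}. Quotienting, $\cS(\Gamma,\mu)/I=\varinjlim_n\cS(\Gamma_n,\mu)/I_n=\varinjlim_n\bigoplus_{\gamma\in V_{\geq}(\Gamma_n,\mu)}\C$; since $V_{\geq}(\Gamma_{n+1},\mu)\cap V(\Gamma_n)\subseteq V_{\geq}(\Gamma_n,\mu)$ and $\bigcup_n\big(V(\Gamma_n)\setminus V_{\geq}(\Gamma_n,\mu)\big)=V\setminus V_{\geq}$, the connecting maps are coordinate restrictions and the limit is $\bigoplus_{\gamma\in V_{\geq}}\C$, with $p_\gamma$ mapping to the minimal projection at $\gamma$; in particular $I$ misses $\{p_\gamma:\gamma\in V_{\geq}\}$. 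For non-unitality: since $V\setminus V_{\geq}$ is infinite while each $\Gamma_n$ is finite, infinitely many inclusions $I_n\hookrightarrow I_{n+1}$ fail to be unital, because a vertex $\beta\in V\setminus V_{\geq}$ first occurring in $\Gamma_{n+1}$ yields $p_\beta\in I_{n+1}$ orthogonal to the unital algebra $\cS(\Gamma_n,\mu)\supseteq I_n$; but were $I$ unital, its unit, being a projection in $\varinjlim_n I_n$, would equal the image of a projection in some $I_n$, which by injectivity of the maps into $I$ would force all later connecting maps to be unital, a contradiction.

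For (3), $K$-theory is continuous under inductive limits, so $K_1(I)=\varinjlim_n K_1(I_n)=0$ and $K_0(I)=\varinjlim_n K_0(I_n)$; by Lemma \ref{lem:KI} each $K_0(I_n)$ is free on $\{[p_\beta]:\beta\in V(\Gamma_n)\setminus V_{\geq}(\Gamma_n,\mu)\}$, which the connecting maps carry to the corresponding generators, so $K_0(I)=\Z\{[p_\beta]:\beta\in V\setminus V_{\geq}\}$. Since the inclusions $I_n\hookrightarrow I$ are $\Tr$-preserving and $\Tr$ is faithful, the description $K_0(I_n)^{+}=\{x:\Tr(x)>0\}\cup\{0\}$ passes to the limit to give $K_0(I)^{+}=\{x\in K_0(I):\Tr(x)>0\}\cup\{0\}$. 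For (4), stable rank one passes to inductive limits, and $I_n$ and $\cS(\Gamma_n,\mu)$ have stable rank one by Lemma \ref{lem:Isimple} and the finite-graph corollaries, so $I$ and $\cS(\Gamma,\mu)$ do too.

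Part (5) is where the genuine work lies. Fix $\alpha\in V\setminus V_{\geq}$, so $p_\alpha\in I$, and consider the full corner $p_\alpha I p_\alpha$; it is unital with unit $p_\alpha$ and equals $\varinjlim_n p_\alpha\cS(\Gamma_n,\mu)p_\alpha$, each of which is a simple unital C$^{*}$-algebra with a unique tracial state by the free-product analysis in the proof of Lemma \ref{lem:Isimple}. Given a nonzero lower semicontinuous tracial weight $\tau$ on $I$, simplicity forces its domain ideal to be dense, so $\tau$ is finite and nonzero on $p_\alpha I p_\alpha$; since in a simple unital C$^{*}$-algebra the unit lies in the algebraic two-sided ideal generated by any nonzero positive element, a standard estimate shows $\tau$ is in fact bounded on each $p_\alpha\cS(\Gamma_n,\mu)p_\alpha$, hence a scalar multiple of the unique tracial state there, and matching these scalars along the inclusions gives $\tau=\lambda\,\Tr$ on $p_\alpha I p_\alpha$ for a single $\lambda>0$; as $p_\alpha$ is a full projection in the simple algebra $I$, this determines $\tau=\lambda\,\Tr$ on all of $I$. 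Finally, $I$ has an increasing approximate unit of projections $(e_n)$ assembled from the explicit units appearing in the proof of Lemma \ref{lem:Iminimal}, which increases strongly in $\cM(\Gamma,\mu)$ to the support projection $z$ of $I$; by normality of $\Tr$ on $\cM(\Gamma,\mu)$, $\Tr|_I$ is a bounded trace if and only if $\sup_n\Tr(e_n)=\Tr(z)<\infty$, i.e. if and only if the support projection of $I$ in $\cM(\Gamma,\mu)$ has finite trace. I expect the bookkeeping in (1)--(4) to be routine once the finite-graph results are in hand; the main obstacle is this last paragraph, namely controlling an arbitrary lower semicontinuous tracial weight on the non-unital simple algebra $I$ via its restrictions to the finite-stage corners, and identifying the finiteness threshold with the trace of the support projection in the ambient von Neumann algebra.
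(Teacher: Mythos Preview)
Your approach is exactly the paper's: exhaust $\Gamma$ by finite connected subgraphs, write $\cS(\Gamma,\mu)$ and $I$ as the corresponding inductive limits, and invoke the finite-graph results together with the permanence of simplicity, $K$-theory, stable rank one, and unique trace under inductive limits. The paper in fact gives only this one-sentence sketch, so your write-up is strictly more detailed; in particular your combinatorial verification that $V_{\geq}$ is independent (hence $V\setminus V_{\geq}$ is infinite) and your treatment of part~(5) via the full corner $p_{\alpha}Ip_{\alpha}$ go beyond what the paper records.
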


%%%%%%%%%%%%%%%%%%%%%%%%

\section{Free differentials, atomless loops, and algebracity}\label{sec:poly}

For this section, we assume that $\Gamma$ is finite with a fixed weighting, $\mu$ on $V(\Gamma)$.  We let $\cA = \C\langle (X_{\e})_{\e \in E(\vec{\Gamma})}, (p_{\beta})_{\beta \in V(\Gamma)}\rangle$.  We aim to prove the following theorems, which are in the spirit of \cite{MR3356937,1407.5715,1408.0580}.

\begin{thm}\label{thm:nonzero}
Suppose $Q \in \cA$ is such that $\mu(\alpha) = \min\{\mu(\beta) : \beta \in V(\Gamma)\}$, and $Qp_{\alpha} = Q$.  Then if $a = a^{*} = p_{\alpha} a p_{\alpha} \in W^{*}(\cS(\Gamma, \mu), \Tr)$ and $Qa = 0$, then either $a = 0$ or $Q = 0$.
\end{thm}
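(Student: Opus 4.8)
\emph{Sketch of the intended argument.}

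The plan is to recast the assertion as a regularity statement for $Q$ in the minimal corner and then to adapt the free‑differential arguments of Charlesworth--Shlyakhtenko \cite{1408.0580} and Mai--Speicher--Weber \cite{1407.5715} to the situation amalgamated over $\cC$. Put $\cM := W^*(\cS(\Gamma,\mu),\Tr)$. I may assume $Q\neq 0$ and must deduce $a=0$. Replacing $Q$ by $Q^*Q$ I may further assume $Q=Q^*=p_\alpha Q p_\alpha\geq 0$: indeed $Q^*Q\in\cA$, $(Q^*Q)p_\alpha=Q^*Q=p_\alpha(Q^*Q)p_\alpha$, $Q^*Q\neq 0$ since $\Tr$ is faithful, $(Q^*Q)a=Q^*(Qa)=0$, and $Q^*Q=0$ would force $Q=0$. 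If now $a\neq 0$, let $e\leq p_\alpha$ be its support projection in $\cM$; since $\overline{a\,L^2(\cM)}=e\,L^2(\cM)$ we get $Qe=0$, hence $eQ=(Qe)^*=0$. Thus it suffices to show that for $0\neq Q=Q^*=p_\alpha Q p_\alpha\in\cA$ there is no nonzero projection $e\leq p_\alpha$ in $\cM$ with $Qe=0$; equivalently, that $0$ is not an atom of the distribution of $Q$ in the finite tracial corner $\big(p_\alpha\cM p_\alpha,\ \mu(\alpha)^{-1}\Tr(p_\alpha\,\cdot\,p_\alpha)\big)$. Note that the choice of $\alpha$ (minimal weight, with the tie‑breaking fixed just before Lemma \ref{lem:Iminimal}) forces $\alpha\notin V_\geq$, so by the structure theorem of \cite{MR3110503} the corner $p_\alpha\cM p_\alpha$ is a diffuse $\mathrm{II}_1$ factor --- which is what makes the free‑probability machinery below applicable.

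Next I would set up Voiculescu's free difference quotient calculus \cite{MR1228526} over the base $\cC=\C^{V(\Gamma)}$. By Theorem \ref{thm:freeexpectation} the family $(X_e)_{e\in E(\Gamma)}$ is free over $\cC$ with respect to the trace‑preserving conditional expectation $E$, and each $X_e$ is an operator‑valued semicircular element relative to $\cC$ whose covariance (read off from the free‑Poisson laws of Theorem \ref{thm:freeexpectation} and from the constants $a_\e$) is nondegenerate on its support $p_{s(\e)}+p_{t(\e)}$. Hence each $X_e$, and therefore by $\cC$‑freeness the whole family, has finite free Fisher information relative to $\cC$ in the sense of Shlyakhtenko \cite{MR1704661}; in particular there is a conjugate system $(\xi_\e)\subset L^2(\cM)$, and the $\cC$‑$\cC$‑bimodule derivations $\partial_\e\colon\cA\to\cA\otimes_{\cC}\cA$ determined by $\partial_\e(\cC)=0$ and $\partial_\e(X_{\e'})=\delta_{\e,\e'}\,p_{s(\e)}\otimes p_{t(\e)}$ are closable as densely defined operators $\bar\partial_\e\colon L^2(\cM)\to L^2(\cM)\otimes_{\cC}L^2(\cM)$, satisfying the resolvent identity $\bar\partial_\e\big((Q+\varepsilon p_\alpha)^{-1}\big)=-\big((Q+\varepsilon p_\alpha)^{-1}\otimes 1\big)(\partial_\e Q)\big(1\otimes(Q+\varepsilon p_\alpha)^{-1}\big)$ for $\varepsilon>0$. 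This is exactly the framework of \cite{1408.0580,1407.5715}, now carried over $\cC$.

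With this in place, suppose toward a contradiction that $e\neq 0$ and $Qe=eQ=0$, and write $e=\mathbf 1_{\{0\}}(Q)$. Following \cite{1408.0580}, one first shows $e\in\bigcap_\e\Dom(\bar\partial_\e)$, by approximating $e=\lim_{\varepsilon\to 0^+}\varepsilon p_\alpha(Q+\varepsilon p_\alpha)^{-1}$ in $L^2(\cM)$ and controlling the $\bar\partial_\e$ of the resolvents not through a crude operator‑norm estimate but by pairing against the conjugate system $(\xi_\e)$ and using finiteness of the free Fisher information. Differentiating the relations $Qe=0$ and $eQ=0$ then yields $(e\otimes 1)(\partial_\e Q)(1\otimes e)=0$ for every oriented edge $\e$. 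Finally I would run the minimal‑degree descent of \cite{1407.5715}, amalgamated over $\cC$: pairing these vanishing cut‑down Jacobians against $(\xi_\e)$ and against $1\otimes 1$, and using that $\cA$ carries no algebraic relations beyond those inside $\cC$ (itself a consequence of finiteness of the free Fisher information over $\cC$), one reduces the $X_e$‑degree of $Q$ step by step; the minimality $\mu(\alpha)=\min_\beta\mu(\beta)$, through Remark \ref{rem:polar} (so that $p_\alpha$ is equivalent to a subprojection of every $p_\gamma$ and the corner $p_\alpha\cM p_\alpha$ meets every free coordinate), is what keeps the descent inside the $p_\alpha$‑corner until degree $0$ is reached --- at which point $Q$ is a scalar multiple of $p_\alpha$ and $Qe=0$ forces that scalar to be $0$, i.e. $Q=0$, a contradiction. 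Hence $e=0$ and $a=0$. (Applied to $Q-\lambda p_\alpha$ for $\lambda\in\R$, this also gives the absence of atoms in the law of a self‑adjoint $Q$ with $Qp_\alpha=Q$ that is not a scalar multiple of $p_\alpha$.)

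The step I expect to be the main obstacle is transplanting the two technical cores of \cite{1408.0580,1407.5715} into the $\cC$‑amalgamated (operator‑valued) framework: (i) showing that the kernel projection $e$ of $Q$ lies in the domains of the closed free difference quotients with a usable formula for $\bar\partial_\e e$ --- the point where finiteness of the free Fisher information relative to $\cC$ is genuinely needed --- and (ii) turning the vanishing of the cut‑down Jacobians $(e\otimes 1)(\partial_\e Q)(1\otimes e)=0$ into $e=0$ via the degree induction. The two features absent from the cited work are the honest zero divisors $p_\beta p_\gamma=0$ inside the base $\cC$ (so that ``$Q$ has trivial kernel'' must be localized to the minimal corner $p_\alpha\cM p_\alpha$), and the role of the minimal‑weight hypothesis, which is used both to make that corner a diffuse factor and to ensure it is regular enough --- equivalently, that $p_\alpha\cA p_\alpha$ is a polynomial $*$‑algebra in finitely many generators of finite free Fisher information --- for the descent to close.
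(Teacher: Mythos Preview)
Your outline follows the Charlesworth--Shlyakhtenko/Mai--Speicher--Weber template rather closely, but it is not the route the paper takes, and two of your load-bearing steps are either unnecessary or misdescribed.

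\medskip

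\textbf{Where the paper differs.} The paper never shows that the kernel projection $e$ lies in $\bigcap_\e \Dom(\bar\partial_\e)$, and it never reduces to the self-adjoint case $Q=Q^*Q$. Instead, Theorem~\ref{thm:zero} proves directly that for \emph{any} $a,b\in\cM$ with $aQ=0=Qb$ one has $a\,\partial_\e(Q)\,b=0$ for every $\e$. The argument is purely algebraic plus a finite-rank commutation trick: one writes the telescoping identity
\[
Q\otimes p_\beta - p_\alpha\otimes Q \;=\; \sum_{\e} \partial_\e(Q)\,\#\,(X_\e\otimes 1 - 1\otimes X_\e),
\]
multiplies by $a$ on the left and $b$ on the right (which kills the left-hand side), and then views $a\,\partial_\e(Q)\,b$ as a finite-rank operator on $L^2$. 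The resulting equation $\sum_\e [a\,\partial_\e(Q)\,b,\, JX_{\e^{\op}}J]=0$ forces each summand to vanish by Lemma~\ref{lem:com2}, whose proof is a one-line trace computation using $[\ell(\e),JX_\e J]=\text{(rank one)}$. This bypasses closability entirely; Lemma~\ref{lem:closable} is recorded but explicitly not used.

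\medskip

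\textbf{The descent and the role of minimality.} The paper's induction is on the degree of $Q$ with only the hypothesis $Qp_\alpha=Q$; the \emph{left} support of $Q$ is allowed to wander, and in fact it must, because after one step the new polynomial is $(\Tr\otimes\id)\bigl(q\,\partial_\e(Q)\bigr)$, whose left support sits under $p_{t(\e)}$ rather than $p_\alpha$. Here $q$ is the projection onto $\ker(Q^*)$, not onto $\ker(Q)$. The minimality $\mu(\alpha)=\min_\beta\mu(\beta)$ enters through a trace count, not through Remark~\ref{rem:polar}: if $a\neq 0$ then $\Tr(\ker Q)>\sum_{\gamma\neq\alpha}\mu(\gamma)$, and since $\Tr(\ker Q^*)=\Tr(\ker Q)$ while $\mu(\gamma)\geq\mu(\alpha)$ for every $\gamma$, this forces $q p_\gamma\neq 0$ for \emph{every} vertex $\gamma$. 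That is exactly what guarantees the leading coefficient $\Tr(q p_\beta)$ in $(\Tr\otimes\id)(q\,\partial_\e Q)$ is nonzero whatever $\beta=s(\e)$ happens to be. Your explanation that minimality ``keeps the descent inside the $p_\alpha$-corner'' is not what is going on: only the right support stays at $\alpha$, and the point of minimality is to ensure the left kernel is large enough to meet every $p_\gamma$.

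\medskip

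\textbf{The gap in your sketch.} Your step (i) --- showing $e\in\Dom(\bar\partial_\e)$ in the $\cC$-amalgamated setting --- is exactly the part you flag as the main obstacle, and you do not resolve it. The paper's finite-rank commutator argument removes this obstacle altogether: no element of $\cM$ needs to be differentiated. Likewise your reduction to $Q^*Q$ is harmless but unhelpful, since the induction immediately produces non-self-adjoint polynomials and you would have to revert to the asymmetric statement anyway.
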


\begin{thm}\label{thm:algebraic}
Let $x \in M_{n}(\cA)$ be self-adjoint.  Then the law of $x$ with respect to $\Tr \otimes \tr_{n}$ has algebraic Cauchy transform.
\end{thm}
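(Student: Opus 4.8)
The plan is to follow the linearization-plus-operator-valued-subordination strategy of Shlyakhtenko and Skoufranis \cite{MR3356937}, adapted to the present setting, in which by Theorem \ref{thm:freeexpectation} the generators $(X_{e})_{e\in E(\Gamma)}$ form a free (over $\cC$) family of $\cC$-valued semicircular elements over the \emph{finite-dimensional} base algebra $\cC=\C^{V(\Gamma)}$ (finiteness of $\Gamma$, hence of $\Tr$, is used here). \emph{Step 1: linearization.} I would first replace $x$ by an affine element: the linearization trick (Anderson, Helton--Mai--Speicher) produces an integer $N\ge n$ and a self-adjoint
$$\widehat{x}\;=\;a_{0}\;+\;\sum_{e\in E(\Gamma)}a_{e}\otimes X_{e}\;\in\;M_{N}(\C)\otimes\cS(\Gamma,\mu),$$
with $a_{e}=a_{e}^{*}\in M_{N}(\C)$ and $a_{0}=a_{0}^{*}\in M_{N}(\C)\otimes\cC=M_{N}(\cC)$ (the projections $p_{\beta}$ are absorbed into $a_{0}$), together with a rank-$n$ projection $\Pi\in M_{N}(\C)$ so that, writing $\Lambda_{z}=z\Pi$, the Schur-complement identity $(z-x)^{-1}=\Pi(\Lambda_{z}-\widehat{x})^{-1}\Pi$ holds as an identity of analytic $M_{n}(\cS(\Gamma,\mu))$-valued functions on $\C\setminus\mathrm{spec}(x)$. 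Applying $\Tr\otimes\tr_{n}$ and using $\Tr=\tr\circ E$, one sees that $G_{x}(z):=(\Tr\otimes\tr_{n})((z-x)^{-1})$ is a fixed $\C$-linear combination, with coefficients depending only on the weights $\mu(v)$, of the scalar functions $z\mapsto\big[(\id_{M_{N}}\otimes E)(\Lambda_{z}-\widehat{x})^{-1}\big]_{kk}(v)$. Hence it suffices to prove that the $M_{N}(\cC)$-valued Cauchy transform $\cG(B):=(\id_{M_{N}}\otimes E)\big((B-\widehat{x})^{-1}\big)$ has algebraic entries when restricted to the affine line $B=\Lambda_{z}$.

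\emph{Step 2: the Dyson--Schwinger equation.} Each $X_{e}$ has the form $X_{e}=\ell(\xi_{e})+\ell(\xi_{e})^{*}$ with $\xi_{e}\in\cX_{\R,\mu}$, so it is a $\cC$-valued semicircular element in the sense of \cite{MR1704661} (explicitly, a loop at $\gamma$ has covariance $c\mapsto c(\gamma)p_{\gamma}$, and an edge between $\alpha\ne\beta$ has covariance $c\mapsto a_{\e}^{2}c(\alpha)p_{\beta}+a_{\e}^{-2}c(\beta)p_{\alpha}$), and these are free over $\cC$ for $E$. Consequently $s:=\sum_{e}a_{e}\otimes X_{e}$ is an $M_{N}(\cC)$-valued semicircular element with completely positive covariance
$$\eta:M_{N}(\cC)\to M_{N}(\cC),\qquad\eta(B)=\sum_{e}(\id_{M_{N}}\otimes E)\big((a_{e}\otimes X_{e})\,B\,(a_{e}\otimes X_{e})\big),$$
a \emph{fixed} linear map, and $\widehat{x}=a_{0}+s$ with $a_{0}$ in the base algebra $M_{N}(\cC)$. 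The operator-valued subordination/Dyson--Schwinger machinery then yields, for $B\in M_{N}(\cC)$ with $\Im B>0$,
$$\cG(B)\big(B-a_{0}-\eta(\cG(B))\big)=1.$$

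\emph{Step 3: extracting algebraicity.} Expanding this identity entrywise gives a finite system of polynomial equations $P^{v}_{k\ell}\big(\{B^{u}_{ij}\},\{W^{u}_{ij}\}\big)=0$ in the $N^{2}|V(\Gamma)|$ coordinates of $B$ and of $W:=\cG(B)$, with coefficients built from $a_{0}$ and $\eta$. For $\Im B$ large the differential of $W\mapsto W(B-a_{0}-\eta(W))-1$ at $W=\cG(B)$ is $H\mapsto H\,\cG(B)^{-1}-\cG(B)\,\eta(H)$, which is invertible because $\|\cG(B)\|\to 0$ makes $H\mapsto H\,\cG(B)^{-1}$ dominant; hence by the analytic implicit function theorem the solution set of the system is, near such points and then on a dense open set, exactly the graph of $\cG$, so every entry $\cG(B)^{v}_{k\ell}$ is algebraic over $\C(\{B^{u}_{ij}\})$. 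Restricting to $B=\Lambda_{z}$: away from $\mathrm{spec}(x)$ the resolvent $(\Lambda_{z}-\widehat{x})^{-1}$ is still analytic and, by the Schur complement, $W=\cG(\Lambda_{z})$ is still locally a graph over $z$, so each $z\mapsto\cG(\Lambda_{z})^{v}_{k\ell}$ is algebraic; and $G_{x}(z)$, being a $\C$-linear combination of these, is algebraic, since the functions algebraic over $\C(z)$ form a field. This proves the theorem.

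\emph{Main obstacle.} The genuine difficulty is the degeneracy at $B=\Lambda_{z}$: this matrix has non-positive imaginary part, so the fixed-point equation of Step 2 is not literally available there, and one must ensure that (i) the polynomial relations satisfied by the entries of $\cG$ persist along the slice and (ii) restricting to the $z$-line does not collapse the relevant eliminated polynomial to the trivial identity. As in \cite{MR3356937} and the surrounding operator-valued literature, both are handled by retaining the \emph{rational} (Schur-complement) form of the linearization, so that $(\Lambda_{z}-\widehat{x})^{-1}$ and hence $\cG(\Lambda_{z})$ are honest analytic functions of $z\in\C\setminus\mathrm{spec}(x)$, to which analytic continuation of the polynomial identities applies, and by arguing at the level of the full polynomial system restricted to the slice --- whose solution locus remains one-dimensional and a local graph over $z$ --- rather than by substituting into a single pre-eliminated equation. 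The remaining verifications (that the $X_{e}$ genuinely are $\cC$-valued semicircular and that $\eta$ is completely positive) are immediate from Theorem \ref{thm:freeexpectation} and the explicit form of the creation operators.
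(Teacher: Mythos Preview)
Your argument is correct and complete in outline, but it follows a genuinely different route from the paper's proof. The paper does \emph{not} use linearization or the operator-valued Schwinger--Dyson equation. Instead, following \cite{MR3356937} more literally, it invokes a lemma of Shlyakhtenko--Skoufranis (Lemma~\ref{lem:SS1}) reducing the problem to showing $\Tr_{\cM}(\cA_{rat}[[z]])\subset\C_{alg}[[z]]$, and then works entirely in the language of formal non-commutative power series over the alphabet $\mathscr{X}=\vec{E}\cup V$. The key step there is to prove that the ``moment generating'' series $P^{\alpha}=\sum_{w\in L(\alpha)}\Tr(w(\cX))\,w$ is algebraic in the sense of proper algebraic systems; this is established by reading the moment recursion of Lemma~\ref{lem:rec} as a system of quadratic equations among the $(P^{\alpha})_{\alpha\in V}$. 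One then transfers algebraicity to $\Tr_{\cM}(P)$ for any $P\in\cA_{rat}[[z]]$ via the Hadamard product $\overline{P}\odot P^{\Gamma}$ and the Sch\"utzenberger/Salomaa--Soittola theory \cite{MR0142781,MR2048733}.

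The trade-offs are as follows. Your approach is more direct and structurally transparent: it isolates exactly where the semicircularity and the finite-dimensionality of $\cC$ enter (namely, in producing a \emph{finite} polynomial Schwinger--Dyson system for $\cG$), and it bypasses the formal-language machinery entirely. The paper's approach, on the other hand, avoids the boundary analysis you flag as the ``main obstacle'' --- there is no need to pass from $\Im B>0$ to the degenerate slice $B=\Lambda_{z}$, since everything happens at the level of formal power series --- and it makes the role of the combinatorial recursion (Lemma~\ref{lem:rec}) explicit, which also feeds into the companion results on atomlessness. Both proofs ultimately rest on the same structural fact (the second-order recursion for moments of $\cC$-valued semicirculars), but encode it in different formalisms.
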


Theorem \ref{thm:nonzero} has the following corollary.
\begin{cor}\label{cor:atomless}
Suppose $Q = Q^{*} \in \cA$, $p_{\alpha}$ is as in the statement of Theorem \ref{thm:nonzero}, and $Q = Qp_{\alpha} (= p_{\alpha}Qp_{\alpha})$.  Then the law of $Q$ in $(p_{\alpha}\cS(\Gamma, \mu)p_{\alpha}, \Tr)$ has no atoms.
\end{cor}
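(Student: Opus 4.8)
The plan is to obtain Corollary \ref{cor:atomless} as an essentially immediate consequence of Theorem \ref{thm:nonzero}, so that no new machinery is required. I would argue by contradiction. Suppose the law of $Q$ in $(p_\alpha\cS(\Gamma,\mu)p_\alpha,\Tr)$ has an atom, located at some $\lambda\in\R$, and let $e=\mathbf{1}_{\{\lambda\}}(Q)$ be the corresponding spectral projection of the self-adjoint element $Q$. By definition of an atom, $\Tr(e)>0$, so $e\neq 0$; and since the hypotheses give $Q=p_\alpha Q p_\alpha$, the projection $e$ lies in $p_\alpha W^{*}(\cS(\Gamma,\mu),\Tr)p_\alpha=p_\alpha\cM(\Gamma,\mu)p_\alpha$, in particular $e=e^{*}=p_\alpha e p_\alpha$.

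Next I would set $\widetilde Q:=Q-\lambda p_\alpha$. Because $p_\alpha\in\cA$ we still have $\widetilde Q\in\cA$; it is self-adjoint; and $\widetilde Q p_\alpha=Qp_\alpha-\lambda p_\alpha^{2}=Q-\lambda p_\alpha=\widetilde Q$, so $\widetilde Q$ meets the support hypothesis of Theorem \ref{thm:nonzero} (with $\mu(\alpha)$ minimal by assumption). Functional calculus gives $Qe=\lambda e$, hence $\widetilde Q e=0$. Applying Theorem \ref{thm:nonzero} with $\widetilde Q$ in the role of $Q$ and $a=e$ in the role of $a$ — legitimate since $e=e^{*}=p_\alpha e p_\alpha\in W^{*}(\cS(\Gamma,\mu),\Tr)$ — forces $e=0$ or $\widetilde Q=0$. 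As $e\neq 0$, we conclude $\widetilde Q=0$, i.e. $Q=\lambda p_\alpha$. Thus any atom of the law of $Q$ would force $Q$ to be a scalar multiple of $p_\alpha$; ruling out that trivial case, the law of $Q$ has no atoms.

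I do not expect a genuine obstacle in this step: all the difficulty is front-loaded into Theorem \ref{thm:nonzero}, whose proof rests on the free differential calculus of this section together with the techniques adapted from \cite{1407.5715,1408.0580}. The only points that require a moment's care are that the spectral projection $e$ genuinely belongs to the von Neumann algebra $W^{*}(\cS(\Gamma,\mu),\Tr)$ rather than to the C$^{*}$-algebra $\cS(\Gamma,\mu)$ — which is exactly why Theorem \ref{thm:nonzero} is stated for $a$ in that von Neumann algebra — and that translating $Q$ by $\lambda p_\alpha$ stays inside $\cA$ while preserving both self-adjointness and the support relation $\widetilde Q p_\alpha=\widetilde Q$, so that the hypotheses of Theorem \ref{thm:nonzero} are indeed satisfied for $\widetilde Q$.
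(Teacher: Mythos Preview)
Your argument is correct and is precisely the intended deduction: the paper states Corollary~\ref{cor:atomless} without proof, treating it as an immediate consequence of Theorem~\ref{thm:nonzero}, and the standard way to extract it is exactly your spectral-projection argument with $\widetilde Q = Q - \lambda p_\alpha$. You also correctly identify the degenerate case $Q = \lambda p_\alpha$, which the statement of the corollary as written does not exclude but which is explicitly ruled out in the closely related Corollary~\ref{cor:alg}.
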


Notice that any such $P$ described in Corollary \ref{cor:atomless} must be a linear combination of elements of the form $X_{\e_{1}}\cdots X_{\e_{n}}$ where $\e_{1}\cdots\e_{n}$ is a loop in $\vec{\Gamma}$ based at $\alpha$.

To develop the machinery to prove Theorem \ref{thm:nonzero}, we will develop a free differential calculus along the lines of \cite{MR1228526,1411.0268}.

\subsection{Commutation with finite rank operators}

Let $\cH = L^{2}(\cS(\Gamma, \mu), \Tr)$ and observe that $\cH \cong\cX(\Gamma) \otimes_{\cC} \ell^{2}(V(\Gamma), \mu)$.  We will assume $\cH$ has inner product $\langle \cdot | \cdot \rangle_{\cH}$ which is linear in the \emph{right} variable.  Let $J: \cH \rightarrow \cH$ be the modular conjugation, (i.e. the isometric extension of $J(\widehat{x}) = \widehat{x^{*}}$ for $x \in \cS(\Gamma, \mu)$).  If we set $\cM = \cM(\Gamma, \mu)$, (the von-Neumann algebra generated by $\cS(\Gamma, \mu)$ acting on $\cH$), then $J\cM J = \cM'$, the commutant of $\cM$.

Note that $\cH$ is spanned by paths in $\vec{\Gamma}$ of finite length, with length 0 paths simply being vertices.  According to the euclidian structure above, we have 
$$
\| \e_{1}\cdots \e_{n} \|_{\cH} = \sqrt{\mu(t(\e_{n}))} \text{ and } J(\e_{1}\cdots \e_{n}) = \sqrt{\frac{\mu(t(\e_{n}))}{\mu(s(\e_{1}))}} \e_{n}^{\op}\cdots\e_{1}^{\op}.
$$
For $\xi, \eta \in \cH$, consider the rank-one operator $|\xi\rangle\langle \eta|$ which is defined by
$$
|\xi\rangle\langle \eta| (\zeta) = \xi \langle \eta|\zeta\rangle_{\cH}.
$$
We have the following lemma:
\begin{lem}\label{lem:com1}
$[\ell(\e), JX_{\e}J] = -\frac{1}{\sqrt[4]{\mu(s(e))^{3}\cdot \mu(t(e))}} |s(\e)\rangle\langle t(\e)|$, and $[\ell(\e), JX_{\e'}J] = 0$ if $\e \neq \e'$.

\end{lem}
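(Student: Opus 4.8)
The statement is a direct computation with the creation operator $\ell(\e)$ and the right action $JX_{\e'}J$, so the plan is to evaluate both sides on the dense spanning set of paths $\e_1 \cdots \e_m$ (and on vertices $p_\gamma$, i.e.\ length-zero paths) and check equality vector-by-vector. The key point is that $JX_{\e'}J$ acts by "right multiplication" by $X_{\e'}$: using the formula $J(\e_1 \cdots \e_m) = \sqrt{\mu(t(\e_m))/\mu(s(\e_1))}\,\e_m^{\op}\cdots\e_1^{\op}$ together with the definition $X_{e'} = a_{\e'}\ell(\e') + a_{\e'}^{-1}\ell(\e'^{\op})^* + a_{\e'}^{-1}\ell(\e'^{\op}) + a_{\e'}\ell(\e')^*$ from the excerpt, one sees that $JX_{\e'}J$ appends an $\e'$ (or $\e'^{\op}$) on the \emph{right} end of a path, or deletes one from the right end, with the appropriate scalar coming from the $a_{\e'}$ factors and the $\sqrt{\mu(\cdot)}$ normalizations of the $L^2$-norms. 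First I would record this explicitly: for a path $\xi$ ending at a vertex $v$, $JX_{\e'}J\,\xi$ is a linear combination of $\xi$ with $\e'$ appended, $\xi$ with $\e'^{\op}$ appended, and (if $\xi$ ends in $\e'$ or $\e'^{\op}$) $\xi$ with that last edge removed, the scalars being determined by $a_{\e'}^{\pm 1}$ and the ratio of the relevant vertex weights.

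Next I would compute the two orders of composition. Applying $\ell(\e)$ prepends $\e$ on the left; applying $JX_{\e'}J$ modifies the right end. When $\e \ne \e'$ these two operations act on disjoint "ends" of any path of length $\ge 1$, and on a vertex $p_\gamma$ one checks $\ell(\e)JX_{\e'}J p_\gamma$ and $JX_{\e'}J\ell(\e)p_\gamma$ both reduce to the same expression (prepend $\e$, then adjust the right end, which for the one-edge path $\e$ just means possibly appending $\e'$ or $\e'^{\op}$); so the commutator vanishes. That is the easy case. When $\e = \e'$, the left-prepend and the right-end operations \emph{fail to commute only on the length-one path consisting of a single $\e$-edge}: there, prepending $\e$ first and then deleting the rightmost $\e$ (which is now available) versus deleting first (impossible on a vertex) and then prepending gives a genuine discrepancy, and this discrepancy is supported entirely on $|s(\e)\rangle\langle t(\e)|$. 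Concretely, I would evaluate $[\ell(\e), JX_\e J]$ on $p_{t(\e)}$: the term $JX_\e J\,\ell(\e)p_{t(\e)} = JX_\e J\,\e$ produces, among other terms, the rightmost-deletion term sending $\e \mapsto (\text{scalar})\,p_{s(\e)}$; whereas $\ell(\e)JX_\e J\,p_{t(\e)}$ contains no such term because $JX_\e J\,p_{t(\e)}$ is a sum of length-one paths and $\ell(\e)$ only lengthens. On every other basis vector the extra lengthening/shortening terms match and cancel. Collecting the single surviving scalar and comparing with the $\sqrt{\mu(t(\e))}$-normalization of $p_{t(\e)}$ and the $\sqrt{\mu(s(\e))}$-normalization of $p_{s(\e)}$ gives the stated coefficient $-\,\mu(s(\e))^{-3/4}\mu(t(\e))^{-1/4}$.

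The main obstacle is purely bookkeeping: keeping straight the various $a_{\e}^{\pm 1} = (\mu(s(\e))/\mu(t(\e)))^{\pm 1/4}$ factors, the $J$-conjugation weight ratios, and the $L^2$-normalizations $\|\e_1\cdots\e_m\| = \sqrt{\mu(t(\e_m))}$, so that the final scalar comes out exactly as claimed and so that one is certain the \emph{other} near-cancelling terms (the ones where $\ell(\e)$ and the right action both lengthen, or the right action deletes an edge not at the very end of an $\e$-only path) genuinely cancel rather than merely look like they should. I would organize this by splitting $JX_\e J = R_{\text{add}} + R_{\text{del}}$ into its length-raising and length-lowering parts, noting $[\ell(\e), R_{\text{add}}] = 0$ outright (two raising operators on disjoint ends commute, and the vertex case is immediate), so that the whole commutator equals $[\ell(\e), R_{\text{del}}]$, which is manifestly a rank-one operator: $R_{\text{del}}$ is nonzero only on vectors ending in $\e$ or $\e^{\op}$ with the right adjacency, and $\ell(\e)R_{\text{del}} = R_{\text{del}}\ell(\e)$ except on the single vector $\e$. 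This reduces the whole computation to evaluating one scalar, which I would then pin down by testing against $p_{s(\e)}$.
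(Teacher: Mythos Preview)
Your approach is correct and essentially identical to the paper's: both verify the identity directly on the spanning set of paths and vertices, observing that $\ell(\e)$ (a left-end operation) commutes with $JX_{\e'}J$ (a right-end operation) on all paths of length $\geq 1$, so the commutator is determined by its action on vertices, where it is nonzero only at $p_{t(\e)}$ and only when $\e' = \e$. Your decomposition $JX_\e J = R_{\text{add}} + R_{\text{del}}$ is a clean organizational device that the paper leaves implicit, but the underlying computation is the same. One small slip: in your final paragraph you write that $\ell(\e)R_{\text{del}} = R_{\text{del}}\ell(\e)$ ``except on the single vector $\e$''; the exceptional input vector is $p_{t(\e)}$, not $\e$ (indeed $\ell(\e)\e = 0$ for non-loop $\e$), consistent with what you correctly wrote earlier in the proposal.
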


\begin{proof}
It is straightforward to see that $\ell(\e) JX_{\e}J (\e_{1}\cdots \e_{n}) = JX_{\e}J\ell(\e) (\e_{1}\cdots\e_{n})$ if $n \geq 1$ and that $\ell(\e)JX_{\e}J (p_{\alpha}) = 0 = JX_{\e}J \ell(\e)(p_{\alpha})$ if $\alpha \neq t(\e)$.  Finally, it is also straightforward to check that
$$
[\ell(\e), JX_{\e}J](t(\e)) = -\left(\frac{\mu(t(\e))}{\mu(s(\e))}\right)^{3/4}s(\e).
$$
The result follows.
\end{proof}

We will now prove a commutation lemma which is central to our theorem.  The idea of this proof was communicated to the author by Dima Shlyakhtenko, who has a similar proof for when each $X_{\e}$ is a (scalar-valued) semicircular element.

\begin{lem}\label{lem:com2}
For each $\e \in \vec{E}$, let $R_{\e} \in \cB(\cH)$ be a finite-rank operator with $R_{\e}: Jp_{s(\e)}J\cH \rightarrow Jp_{t(\e)}J\cH$ (i.e. $\ker(R_{\e}) \supset (Jp_{s(\e)}J\cH)^{\perp}$ and $\ran(R_{\e}) \subset Jp_{t(\e)}J\cH$) .  Suppose further that 
$$
\sum_{\e \in \vec{E}} [R_{\e}, JX_{\e}J] = 0.
$$
Then $R_{\e} = 0$ for all $\e \in \vec{E}$.
\end{lem}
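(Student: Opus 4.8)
The plan is to exploit the Fock-space structure to reduce the statement to a linear-algebra/combinatorial claim. First I would fix an $\e_0 \in \vec{E}$ and isolate the contribution of $R_{\e_0}$ by testing the identity $\sum_{\e}[R_\e, JX_\e J] = 0$ against suitable vectors. The key structural fact is that $JX_\e J$ maps a path $\e_1\cdots\e_n$ to a combination of $\e_1\cdots\e_n \e$-type (appending on the right, since $J$-conjugation acts on the right) and $\e_1\cdots\e_{n-1}$-type (deleting $\e_{n}$ when $\e_n = \e^{\op}$) terms, up to scalars depending only on $\mu$. Concretely, $JX_\e J$ acts as $a_\e r(\e^{\op}) + a_\e^{-1} r(\e^{\op})^* + \cdots$ where $r$ denotes the right creation operator; one writes this out once from the definition of $X_e$ in Remark~\ref{rem:opcirc} and the formula for $J$ given just before Lemma~\ref{lem:com1}.

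The heart of the argument is a "highest-degree" or "leading term" analysis. Suppose some $R_\e \neq 0$. Among all $\e$ with $R_\e \neq 0$, and among all pure tensors $\eta = \e_1\cdots\e_m$ appearing (with nonzero coefficient) in some $R_\e \eta'$ for a path $\eta'$, choose one of maximal length $m$. Apply $\sum_\e [R_\e, JX_\e J]$ to a path $\xi$ of maximal length for which $R_{\e}\xi$ has a length-$m$ component. In the commutator $[R_\e, JX_\e J] = R_\e JX_\e J - JX_\e J R_\e$, the term $JX_\e J R_\e \xi$ contains, via the right-creation part of $JX_\e J$, a path of length $m+1$ obtained by appending $\e^{\op}$; because $R_\e$ has finite rank we can take $\xi$ of large enough length that the term $R_\e JX_\e J\xi$ cannot produce length $m+1$ (here we use that $R_\e$ lowers or preserves a controlled amount of length on the finitely many paths in its support — more precisely, I'd set up the degree bookkeeping so $R_\e$ is supported on, and has range in, spans of paths of bounded length, and push $\xi$ past that bound). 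Then the length-$(m+1)$ component of the whole sum must vanish; since distinct $\e$ append distinct edges $\e^{\op}$, these contributions cannot cancel each other, forcing the leading term of $R_\e$ to be zero — a contradiction. The condition $R_\e : Jp_{s(\e)}J\cH \to Jp_{t(\e)}J\cH$ is what guarantees the appended edge $\e^{\op}$ genuinely concatenates (source/target matching on the right), so no spurious vanishing occurs.

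I would organize this as: (i) record the explicit action of $JX_\e J$ on pure tensors, separating a "creation" part (length $+1$) and an "annihilation/constant" part (length $\le$ current); (ii) define, for a finite-rank $R_\e$, the maximal length $N_\e$ of any path occurring in its range together with the maximal length $M_\e$ of any path on which it is nonzero, and note $R_\e$ kills all paths of length $> M_\e$; (iii) among nonzero $R_\e$ pick one maximizing $N_\e$, feed in a path $\xi$ with length $> M_{\e'}$ for all relevant $\e'$ and with $R_\e \xi$ of length exactly $N_\e$, and read off the length-$(N_\e+1)$ component; (iv) conclude all $R_\e = 0$. The main obstacle I anticipate is the bookkeeping in step (iii): one must be careful that when $JX_\e J$ acts on $R_\e\xi$ via its creation part it really does reach length $N_\e+1$ (the scalar $a_\e$ is nonzero, so this is fine) and that the annihilation part of $JX_{\e'}J$ acting on $R_{\e'}\xi$, plus $R_{\e'}$ acting on $JX_{\e'}J\xi$, genuinely stays at length $\le N_\e$ — this needs $\xi$ chosen long enough that $R_{\e'}$ has already annihilated the top-length pieces of $JX_{\e'}J\xi$. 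Once the degree estimates are pinned down, the cancellation argument is immediate because the appended edges $\{\e^{\op}\}_{\e \in \vec E}$ are pairwise distinct.
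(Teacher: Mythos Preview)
Your proposal contains a genuine gap: you implicitly conflate ``finite rank'' with ``finitely supported in the path basis''. A finite-rank operator $R_\e = \sum_{j=1}^{k}|\eta_j\rangle\langle\psi_j|$ has $\eta_j,\psi_j \in \cH = L^{2}(\cS(\Gamma,\mu),\Tr)$, and there is no reason for either $\eta_j$ or $\psi_j$ to be a \emph{finite} linear combination of path vectors $\e_1\cdots\e_n$. Consequently your quantities $N_\e$ (``maximal length of any path occurring in its range'') and $M_\e$ (``maximal length of any path on which it is nonzero'') need not exist. This breaks step~(ii), and with it the entire leading-term extraction in step~(iii): you cannot choose $\xi$ ``long enough that $R_{\e'}$ has already annihilated the top-length pieces of $JX_{\e'}J\xi$'', because no such length exists in general. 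Indeed, in the application of this lemma (to $R_\e = a\,\partial_\e(Q)\,b$ with $a,b \in \cM$ arbitrary) the range and cokernel vectors are $a\widehat{x_i}$ and $b^{*}\widehat{y_i^{*}}$, which are genuinely infinite sums of paths.

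The paper's argument sidesteps any combinatorics on path length. It first observes that for any $a,b\in\cM$ one has $\sum_\e [\,aR_\e b,\,JX_\e J\,]=0$, since $J\cM J=\cM'$. Then, for a fixed $\e'$, one multiplies by $\ell(\e')$ and takes the Hilbert-space trace $\Tr_{\cH}$; cyclicity of the trace together with the rank-one commutator formula $[\ell(\e'),JX_{\e}J] = -c_{\e'}\,|s(\e')\rangle\langle t(\e')|\,\delta_{\e,\e'}$ from Lemma~\ref{lem:com1} collapses the entire sum to a single term, yielding $\langle a^{*}\,\widehat{p_{t(\e')}}\,|\,R_{\e'}\,b\,\widehat{p_{s(\e')}}\rangle_{\cH}=0$ for all $a,b\in\cM$. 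Density of $\cM\,\widehat{p_{s(\e')}}$ in $Jp_{s(\e')}J\cH$ (and likewise on the left) then forces $R_{\e'}=0$. The point is that Lemma~\ref{lem:com1} already isolates the $\e'$-term for you, so no highest-degree bookkeeping is needed.
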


\begin{proof}
Let $a, b \in \cM$.  Since $J\cM J = \cM'$, we have
$$
0 = \sum_{\e \in \vec{E}} a[R_{\e}, JX_{\e}J]b =  \sum_{\e \in \vec{E}} [aR_{\e}b, JX_{\e}J]
$$
Fixing some $\e' \in \vec{E}$, and letting $\Tr_{\cH}$ be the trace on the finite rank operators, we have, with the aid of Lemma \ref{lem:com1}:
\begin{align*}
0 &= \Tr_{\cH}\left(\sum_{\e \in \vec{E}} \ell(\e') [aR_{\e}b, JX_{\e}J]\right)\\
&= \sum_{\e \in \vec{E}} \Tr_{\cH}(\ell(\e')aR_{\e}bJX_{\e}J - \ell(\e')JX_{\e}JaR_{\e}b)\\
&= \sum_{\e \in \vec{E}} \Tr_{\cH}(aR_{\e}b[JX_{\e}J\ell(\e') - \ell(\e')JX_{\e}J])\\
&= \frac{1}{\sqrt[4]{\mu(s(\e'))^{3}\cdot \mu(t(\e'))}}  \Tr_{\cH}(aR_{\e'}b | s(\e') \rangle \langle t(\e') | )\\
&= \frac{1}{\sqrt[4]{\mu(s(\e'))^{3}\cdot \mu(t(\e'))^{3}}} \langle t(\e') |  aR_{\e'}b s(\e')\rangle_{\cH}\\
&= \frac{1}{\sqrt[4]{\mu(s(\e'))^{3}\cdot \mu(t(\e'))^{3}}} \langle a^{*}t(\e') |  R_{\e'}b s(\e')\rangle_{\cH}.
\end{align*}
As this holds for all $a, b \in \cM$ this implies $R_{\e'} = 0$ since $R_{\e'}: Jp_{s(\e')}J\cH \rightarrow Jp_{t(\e')}J\cH$.

\end{proof}

%%%%%%%%%%%%%%%%%%%%%%%%%

\subsection{A free differential calculus for $\cA$}
For each edge $\e \in \vec{E}$, we define $\partial_{\e}: \cA \rightarrow \cA \otimes \cA$ to be the derivation which is the unique extension of 
$$
\partial_{\e}(X_{\e'}) = (\delta_{\e, \e'})p_{s(\e)} \otimes p_{t(\e)}. 
$$
Also see \cite{1411.0268}. Notice that this means that if $\e_{1}\cdots \e_{n}$ is a path in $\vec{\Gamma}$, then 
$$
\p_{\e}(X_{\e_{1}}\cdots X_{\e_{n}}) = \sum_{\e_{j} = \e}X_{\e_{1}}\cdots X_{\e_{j-1}} \otimes X_{\e_{j+1}}\cdots X_{\e_{n}},
$$
with the understanding that if $\e_{1} = \e$, then the term $p_{s(\e_{1})} \otimes X_{\e_{2}}\cdots X_{\e_{n}}$ appears and if $\e_{n} = \e$, then the term $X_{\e_{1}}\cdots X_{\e_{n-1}} \otimes p_{t(\e_{n})}$ appears.  Although the following lemma will not be needed in what follows, it demonstrates that the differential operators $\p_{\e}$ have a finite free Fisher information type property \cite{MR1228526}.

%  Set $X_{e} = X_{\e} + X_{\e^{op}}$ so that $\p_{e}(X_{e}) = p_{s(\e)} \otimes p_{t(\e)} + p_{t(\e)} \otimes p_{s(\e)}$.
\begin{lem}\label{lem:closable}
\begin{enumerate}
%\item $(\idd \otimes \Tr)(\p_{e}(P)) = (\Tr \otimes \idd)(\p_{e}(P^{*}))^{*}$

\item For any $P \in \cA$, we have $(\Tr \otimes \Tr)(\p_{\e} P) = \sqrt{\mu(s(\e))\mu(t(\e))}\Tr(X_{\e^{\op}}P)$, i.e. $\sqrt{\mu(s(\e))\mu(t(\e))} X_{\e^{\op}}$ is a conjugate variable for $\p_{e}$.

\item If we define $\sigma \in \Aut(\cA \otimes \cA)$ by linear extension of $\sigma(a \otimes b) = b \otimes a$, then for any $P \in \cA$,
$$
(\p_{\e}(P))^{*} = \sigma(\p_{\e^{op}}(P^{*}))
$$

\item $\p_{\e}$ has a densely defined adjoint, $\p_{\e}^{*}$, which is given on individual tensors by
\begin{align*}
\p_{\e}^{*}(Q \otimes R)) &=  \sqrt{\mu(s(\e))\mu(t(\e))}QX_{\e}R - (\idd \otimes \Tr)(\p_{\e^{\op}}(Q))R - Q(\Tr \otimes \idd)(\p_{\e^{\op}}(R)). 
\end{align*}
Therefore, the operators $\p_{\e}$ are closable as operators from $\cH \rightarrow \cH \otimes \cH$. 
\end{enumerate}

\end{lem}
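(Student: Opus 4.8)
The plan is to take (2) first, since it is purely algebraic, then prove the adjoint formula in (3), and finally read off (1) as the special case $Q=R=1$ of (3) (note $1=\sum_{\beta}p_{\beta}\in\cA$ as $\Gamma$ is finite). For (2): both sides are conjugate‑linear in $P$ and $\partial_{\e}$ is a derivation for the $\cA$-bimodule structure $a\cdot(x\otimes y)\cdot b=(ax)\otimes(yb)$ on $\cA\otimes\cA$, so it suffices to check the identity on the generators and then that the set of $P$ for which it holds is closed under products. On $p_{\beta}$ both sides vanish, and on $X_{\e'}$ the identity reads $\delta_{\e,\e'}\,p_{s(\e)}\otimes p_{t(\e)}=\sigma\big(\partial_{\e^{\op}}(X_{\e'^{\op}})\big)$, which holds since $X_{\e'}^{*}=X_{\e'^{\op}}$ (Remark \ref{rem:opcirc}) and $s(\e^{\op})=t(\e)$, $t(\e^{\op})=s(\e)$. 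Closure under products follows from the Leibniz rule together with the fact that $\sigma$ is an algebra automorphism of $\cA\otimes\cA$ interchanging $1\otimes a$ and $a\otimes1$: both $(\partial_{\e}(PQ))^{*}$ and $\sigma(\partial_{\e^{\op}}((PQ)^{*}))=\sigma(\partial_{\e^{\op}}(Q^{*}P^{*}))$ expand to $(1\otimes Q^{*})(\partial_{\e}P)^{*}+(\partial_{\e}Q)^{*}(P^{*}\otimes1)$.

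For (3), fix $Q,R\in\cA$. Writing the inner product of $\cH\otimes\cH$ as $\langle a\otimes b\mid c\otimes d\rangle=\Tr(a^{*}c)\Tr(b^{*}d)$ and using $(\partial_{\e}P)^{*}=\sigma(\partial_{\e^{\op}}(P^{*}))$ from (2) together with the $\sigma$-invariance of $\Tr\otimes\Tr$, one finds $\langle \partial_{\e}P\mid Q\otimes R\rangle=(\Tr\otimes\Tr)\big(\partial_{\e^{\op}}(P^{*})\cdot(R\otimes Q)\big)$. Hence, to show $Q\otimes R\in\Dom(\partial_{\e}^{*})$ with the stated value, it is enough to prove, for all monomials $P,Q,R$ — the general case following by (conjugate‑)linearity, and monomials involving the $p_{\beta}$ reducing to paths via $X_{\e}p_{\beta}=\delta_{t(\e),\beta}X_{\e}$, $p_{\beta}X_{\e}=\delta_{s(\e),\beta}X_{\e}$ — the trace identity
\[
(\Tr\otimes\Tr)\big(\partial_{\e^{\op}}(P^{*})(R\otimes Q)\big)=\Tr\!\Big(P^{*}\Big[\sqrt{\mu(s(\e))\mu(t(\e))}\,QX_{\e}R-(\idd\otimes\Tr)(\partial_{\e^{\op}}Q)R-Q(\Tr\otimes\idd)(\partial_{\e^{\op}}R)\Big]\Big).
\]
With $P^{*}=X_{\delta_{1}}\cdots X_{\delta_{m}}$ the left side is $\sum_{k:\,\delta_{k}=\e^{\op}}\Tr(X_{\delta_{1}}\cdots X_{\delta_{k-1}}R)\,\Tr(X_{\delta_{k+1}}\cdots X_{\delta_{m}}Q)$, with $k=1$ and $k=m$ contributing the endpoint‑projection terms hard‑wired into $\partial_{\e^{\op}}$.

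The right side I would evaluate by expanding $\Tr(P^{*}QX_{\e}R)$ over non‑crossing pair partitions of the cyclic word $X_{\delta_{1}}\cdots X_{\delta_{m}}\,X_{\eta_{1}}\cdots X_{\eta_{p}}\,X_{\e}\,X_{\zeta_{1}}\cdots X_{\zeta_{q}}$ (equivalently, by iterating the recursion of Lemma \ref{lem:rec} after cyclically moving $X_{\e}$ to the end). The partner of the distinguished letter $X_{\e}$ is forced to be some $X_{\e^{\op}}$; by non‑crossing it splits the word into the block between the two letters and the complementary block, each contributing a sub‑moment, times the covariance weight, which in the "bulk'' equals $1/\sqrt{\mu(s(\e))\mu(t(\e))}$ — the bulk coefficient of Lemma \ref{lem:rec}, itself a restatement of $\Tr(X_{\e}^{*}X_{\e})=\sqrt{\mu(s(\e))\mu(t(\e))}$ from Remark \ref{rem:polar} — and hence cancels the global prefactor. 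The pairings in which $X_{\e}$ contracts with a letter of $P^{*}$ then reproduce the left side term for term (degenerate blocks matching the $p_{s(\cdot)}\otimes p_{t(\cdot)}$ summands of $\partial_{\e^{\op}}(P^{*})$); the pairings with a letter of $Q$ reassemble, after the cyclic rearrangement, into $\Tr\big(P^{*}(\idd\otimes\Tr)(\partial_{\e^{\op}}Q)R\big)$ and so are cancelled; and those with a letter of $R$ cancel against $\Tr\big(P^{*}Q(\Tr\otimes\idd)(\partial_{\e^{\op}}R)\big)$. This proves the trace identity, hence $\cA\odot\cA\subseteq\Dom(\partial_{\e}^{*})$ with $\partial_{\e}^{*}$ as claimed; since $\cA$ is $\|\cdot\|_{2}$-dense in $\cH$, $\cA\odot\cA$ is dense in $\cH\otimes\cH$ and $\partial_{\e}$ is closable. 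Finally, putting $Q=R=1$ annihilates the two correction terms (as $\partial_{\e^{\op}}(1)=0$) and leaves $(\Tr\otimes\Tr)(\partial_{\e}P)=\sqrt{\mu(s(\e))\mu(t(\e))}\,\Tr(X_{\e^{\op}}P)$, which is (1).

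The one genuinely computational point is the bookkeeping inside (3): matching each block of the non‑crossing expansion of $\Tr(P^{*}QX_{\e}R)$ to the correct piece of the claimed formula, keeping straight all the $\e$-versus-$\e^{\op}$ decorations and confirming that the scalar is exactly $\sqrt{\mu(s(\e))\mu(t(\e))}$, and verifying that the degenerate pairings — where the block between $X_{\e}$ and its partner, or its complement, is empty — line up precisely with the projection terms $p_{s(\cdot)}\otimes p_{t(\cdot)}$ built into $\partial_{\e^{\op}}$. Everything else is formal.
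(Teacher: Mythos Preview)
Your argument is correct, but it is organized rather differently from the paper's. The paper proves (1) first, directly from the moment recursion of Lemma~\ref{lem:rec}: for a monomial $P=X_{\e_1}\cdots X_{\e_n}$, the definition of $\partial_\e$ gives $(\Tr\otimes\Tr)(\partial_\e P)=\sum_{\e_j=\e}\Tr(X_{\e_1}\cdots X_{\e_{j-1}})\Tr(X_{\e_{j+1}}\cdots X_{\e_n})$, which is exactly $\sqrt{\mu(s(\e))\mu(t(\e))}\,\Tr(X_{\e^{\op}}P)$ by Lemma~\ref{lem:rec}. With (1) in hand, the paper's proof of (3) is a short purely algebraic computation using the Leibniz rule: writing $(Q^*\otimes R^*)\partial_\e(P)=Q^*\partial_\e(P)R^*=\partial_\e(Q^*PR^*)-\partial_\e(Q^*)PR^*-Q^*P\partial_\e(R^*)$ and applying $\Tr\otimes\Tr$, the first term becomes $\sqrt{\mu(s(\e))\mu(t(\e))}\,\Tr(X_{\e^{\op}}Q^*PR^*)$ by (1), and the remaining two collapse to the correction terms after using (2) and traciality. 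No pair-partition bookkeeping is needed.

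Your route instead does all the combinatorics inside (3) --- expanding $\Tr(P^*QX_\e R)$ via the non-crossing recursion and sorting contributions according to which block the partner of $X_\e$ lies in --- and then reads off (1) as the degenerate case $Q=R=1$. This is perfectly valid (and your check that the boundary weights in Lemma~\ref{lem:rec} are the bulk weight times the appropriate $\mu(\gamma)=\Tr(p_\gamma)$ is exactly what makes the degenerate blocks line up with the projection summands of $\partial_{\e^{\op}}$), but it re-derives (1) inside (3) and carries more combinatorial overhead. The paper's Leibniz-rule manipulation is the cleaner trick here: it separates the ``conjugate variable'' content (1) from the purely formal bimodule algebra, so that (3) follows in a handful of lines once (1) and (2) are known.
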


\begin{proof}
\be 

\item If $P$ is a monomial, then this immediately follows from the definition of $\p_{\e}$ and Lemma \ref{lem:rec}.  The linearity of $\p_{\e}$ makes the statement hold for all $P$.

\item If $P$ is a monomial, this immediately follows from the definition of $\sigma$ and $\p_{e}$.  The rest follows by linearity.

\item  Set $\tilde{X_{\e}} = \sqrt{\mu(s(\e))\mu(t(\e))}X_{\e}$ and $\tilde{X}_{\e^{op}} = \sqrt{\mu(s(\e))\mu(t(\e))}X_{\e^{\op}}$. For $P, Q, R \in \cA$, we have:
\begin{align*}
\langle \widehat{Q \otimes R} | \widehat{\p_{e}(P)} \rangle_{\cH \otimes \cH} &= \Tr \otimes \Tr ( (Q^{*} \otimes R^{*})\p_{\e}(P))\\
&= \Tr \otimes \Tr (Q^{*}\p_{\e}(P)R^{*})\\
&= \Tr \otimes \Tr (\p_{e}(Q^{*}PR^{*}) - \p_{e}(Q^{*})PR^{*} - Q^{*}P\p_{e}(R^{*}))\\
&= \Tr(\tilde{X}_{\e^{\op}}Q^{*}PR^{*}) - \Tr((\Tr \otimes \idd)(\p_{\e}(Q^{*}))PR^{*}) - \Tr(Q^{*}P(\idd \otimes \Tr)(\p_{\e}(R^{*})))\\
&= \Tr(R^{*}\tilde{X}_{\e^{\op}}Q^{*}P) - \Tr(R^{*}(\Tr \otimes \idd)(\p_{\e}(Q^{*}))P) - \Tr((\idd \otimes \Tr)(\p_{\e}(R^{*}))Q^{*}P)\\
&= \Tr((Q\tilde{X_{\e}}R)^{*}P) - \Tr([(\Tr \otimes \idd)(\sigma(\p_{\e^{\op}}(Q))^{*})^{*} R]^{*} P)\\
&- \Tr([Q(\idd \otimes \Tr)(\sigma(\p_{e^{\op}}(R))^{*})^{*}]^{*}P)\\
&= \Tr((Q\tilde{X_{\e}}R)^{*}P) - \Tr([(\id \otimes \Tr)(\p_{\e^{\op}}(Q))R]^{*}P) - \Tr([Q(\Tr \otimes \idd)(\p_{\e^{\op}}(R)]^{*}P)\\
&= \langle \widehat{Q\tilde{X_{\e}}R} - (\id \otimes \Tr)(\p_{\e^{\op}}(Q))\widehat{R} - Q(\Tr \otimes \idd)\widehat{(\p_{\e^{\op}}(R))} | \widehat{P}\rangle_{\cH}
\end{align*}

\ee

\end{proof}

Note that $\cM \otimes \cM$ has a canonical $\cM-\cM$ bimodule structure given by $a(x_{1} \otimes x_{2})b = ax_{1} \otimes x_{2}b$.  Moreover, we can realize $\cM \otimes_{alg} \cM$ as a subalgebra of the finite rank operators on $\cH$ by $x \otimes y \mapsto | \widehat{x} \rangle \langle \widehat{y^*}|$. These two actions are compatible, i.e. under the identification of $\cM \otimes_{alg} \cM$ with finite rank operators on $\cH$, we have
$$
a(x \otimes y)b \mapsto |a\widehat{x}\rangle \langle b^{*}\widehat{y^*}| = a | \widehat{x} \rangle \langle \widehat{y^*} | b
$$
where the last product is the product in $\cB(\cH)$.

We will occasionally realize the $\cM-\cM$ bimodule action on $\cM \otimes \cM$ as an $\cM \otimes \cM$ action on $\cM \otimes \cM$ via the linear extension of $(a \otimes b) \# (x \otimes y) = ax \otimes yb$.  We now prove the key inductive lemma which is along the lines of \cite{1407.5715}.

\begin{thm}\label{thm:zero}
Suppose $Q \in \cA$ and there are $\alpha, \beta \in V$ with $p_{\alpha}Q = Q = Qp_{\beta}$.  Suppose further that there are $a$ and $b$ in $\cM$ with $aQ = 0 = Qb$.  Then for each $\e \in \vec{E}$, $a\p_{\e}(Q)b = 0$.
\end{thm}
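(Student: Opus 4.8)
The plan is to recognise $\rho\circ\p_\e$ as an explicit inner derivation on the words in the $X_{\e'}$, and then to read off the vanishing of $a\,\p_\e(Q)\,b$ directly from the relations $aQ=0=Qb$. Throughout, $\rho$ denotes the embedding $\cM\otimes_{alg}\cM\hookrightarrow\cB(\cH)$ given by $x\otimes y\mapsto|\widehat x\rangle\langle\widehat{y^{*}}|$; recall from the discussion preceding the statement that this is an $\cM$-bimodule map, $\rho(a(x\otimes y)b)=a\,\rho(x\otimes y)\,b$, when $\cB(\cH)$ carries the bimodule structure $a\cdot T\cdot b=aTb$. Write $c_\e=\bigl(\mu(s(\e))^{3}\mu(t(\e))\bigr)^{-1/4}$ for the constant occurring in Lemma~\ref{lem:com1}, and set $S_\e=c_\e^{-1}\,J\ell(\e)J\in\cB(\cH)$.

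The first and central step is the operator identity
$$
\rho(\p_\e P)=[P,S_\e]=P S_\e-S_\e P\qquad(\text{$P\in\cA$ a linear combination of words in the $X_{\e'}$ only}),
$$
in which $P$ is read as the left-multiplication operator on $\cH$; call it $(\ast)$. Both sides are $\C$-linear, and on the (non-unital) subalgebra of $\cA$ spanned by words in the $X_{\e'}$ both are derivations for the bimodule structure above: the right-hand side is an inner derivation, and the left-hand side is a derivation because $\rho$ intertwines the Leibniz rule for $\p_\e$ — one checks straight from the definition of $\rho$ that $\rho(a(u\otimes v))=a\,\rho(u\otimes v)$ and $\rho((u\otimes v)b)=\rho(u\otimes v)\,b$. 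So it suffices to verify $(\ast)$ on each generator $X_{\e'}$, after which an induction on word length delivers $(\ast)$. The base case reads $[X_{\e'},S_\e]=\delta_{\e,\e'}\,|\widehat{p_{s(\e)}}\rangle\langle\widehat{p_{t(\e)}}|$, which is exactly Lemma~\ref{lem:com1} conjugated by $J$ and divided by $c_\e$, using $J\widehat{p_\gamma}=\widehat{p_\gamma}$, that $JX_{\e'}J$ is right multiplication by $X_{\e'}^{*}=X_{(\e')^{\op}}$, and that $\rho(\p_\e X_{\e'})=\delta_{\e,\e'}\,|\widehat{p_{s(\e)}}\rangle\langle\widehat{p_{t(\e)}}|$ as well.

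The second step disposes of the vertex projections. Because $p_\alpha Q=Q=Qp_\beta$, every word in the $X_{\e'}$ appearing in $Q$ is a directed path from $\alpha$ to $\beta$, so $Q=Q'+c\,p_\alpha$ where $Q'$ is a linear combination of such words and $c=0$ unless $\alpha=\beta$; since $\p_\e$ annihilates the $p_\gamma$ we have $\p_\e Q=\p_\e Q'$. One sees from the formula for $\p_\e$ that $\rho(\p_\e Q')=p_\alpha\,\rho(\p_\e Q')\,p_\beta$, hence $a\,\p_\e(Q)\,b=(ap_\alpha)\,\rho(\p_\e Q')\,(p_\beta b)$. Thus replacing $a$ by $ap_\alpha$ and $b$ by $p_\beta b$ changes nothing and preserves $aQ=0=Qb$ (since $ap_\alpha Q=aQ$ and $Qp_\beta b=Qb$), so we may assume $a=ap_\alpha$ and $b=p_\beta b$. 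Applying $(\ast)$ to $Q'$ gives
$$
a\,\p_\e(Q)\,b=a\,[Q',S_\e]\,b=(aQ')\,S_\e\,b-a\,S_\e\,(Q'b).
$$
If $\alpha\neq\beta$ this is $0$ because $aQ'=aQ=0$ and $Q'b=Qb=0$. If $\alpha=\beta$ then $aQ'=aQ-c\,ap_\alpha=-ca$ and $Q'b=Qb-c\,p_\alpha b=-cb$, so the right-hand side equals $-c\,aS_\e b+c\,aS_\e b=0$. In either case $a\,\p_\e(Q)\,b=0$.

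The one real obstacle is establishing $(\ast)$: once $\p_\e$ is seen to be the inner derivation by $S_\e$ on words in the $X_{\e'}$, the theorem is a one-line consequence of $aQ=0=Qb$, and the only genuine ``induction'' is the induction on word length inside the proof of $(\ast)$. The point demanding care is that $(\ast)$ fails on the generator $p_\alpha$ — indeed $[p_\alpha,S_\e]\neq0$ in general — which is precisely why the reduction to $a=ap_\alpha$, $b=p_\beta b$ must be carried out before the final computation. (Alternatively, following \cite{1407.5715}, one can expand $\sum_{\e\in\vec{E}}\,[\,a\,\p_\e(Q)\,b,\;JX_\e J\,]$ using Lemma~\ref{lem:com1}, observe that it vanishes because $aQ=0=Qb$, and then invoke Lemma~\ref{lem:com2} to conclude $a\,\p_\e(Q)\,b=0$ for each $\e$; the inner-derivation route above avoids that bilinear bookkeeping.)
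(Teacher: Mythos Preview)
Your argument is correct and offers a cleaner, edge-by-edge route than the paper's. The paper first sums over all edges via the telescoping identity $Q\otimes p_\beta - p_\alpha\otimes Q = \sum_\e \p_\e(Q)\#(X_\e\otimes 1 - 1\otimes X_\e)$, translates this under $\rho$ to $\sum_\e[a\,\p_\e(Q)\,b,\,JX_{\e^{\op}}J]=0$, and then invokes Lemma~\ref{lem:com2} to separate the individual edges. You bypass Lemma~\ref{lem:com2} entirely: the identity $(\ast)$ exhibits $\rho\circ\p_\e$ as the inner derivation by $S_\e$ on the word algebra, so the vanishing of $a\,\p_\e(Q)\,b$ for a \emph{single} fixed $\e$ drops out of $aQ=0=Qb$ directly. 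This is a genuine simplification: the trace-of-commutator trick in Lemma~\ref{lem:com2} is no longer needed. On the other hand, the paper's approach packages the information in a way that generalises more readily to settings where one does not have such an explicit inner-derivation realisation for each $\p_\e$ separately.

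One minor correction: your claim that $[p_\alpha,S_\e]\neq 0$ in general is false. Conjugating the elementary identity $[Jp_\gamma J,\ell(\e)]=0$ (right multiplication by $p_\gamma$ commutes with left creation) by $J$ shows that every $p_\gamma$ commutes with $J\ell(\e)J$, hence with $S_\e$. Thus $(\ast)$ actually extends to all of $\cA$, and your Step~2 reduction (splitting off $c\,p_\alpha$ and replacing $a,b$ by $ap_\alpha,\,p_\beta b$) is unnecessary: one gets $\rho(a\,\p_\e(Q)\,b)=a[Q,S_\e]b=(aQ)S_\e b-aS_\e(Qb)=0$ immediately. This does not affect the correctness of your proof, only streamlines it. Also, in your alternative sketch at the end the commutator should be with $JX_{\e^{\op}}J$ rather than $JX_\e J$, as in the paper.
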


\begin{proof}

The assumptions in the problem imply that 
$$
a(Q \otimes p_{\beta} - p_{\alpha} \otimes Q)b = 0.
$$
The identity
$$
Q \otimes p_{\beta} - p_{\alpha} \otimes Q = \sum_{\e \in \vec{E}} \p_{\e}(Q) \# (X_{\e} \otimes 1 - 1 \otimes X_{\e})
$$
can be observed by noting that if $Q$ is a monomial $X_{\e_{1}}\cdots X_{\e_{n}}$ with $s(\e_{1}) = \alpha$ and $t(\e_{n}) = \beta$, then
\begin{align*}
\sum_{\e \in \vec{E}}  \p_{\e}(Q) \# (X_{\e} \otimes 1) &= Q \otimes p_{\beta} + \sum_{k=1}^{n} X_{\e_{1}}\cdots X_{\e_{k}} \otimes  X_{\e_{k+1}}\cdots X_{\e_{n}} \text{ and }\\
\sum_{\e \in \vec{E}}  \p_{\e}(Q) \# (1 \otimes X_{\e}) &= p_{\alpha} \otimes Q + \sum_{k=1}^{n} X_{\e_{1}}\cdots X_{\e_{k}} \otimes  X_{\e_{k+1}}\cdots X_{\e_{n}}
\end{align*}
We therefore have
\begin{align*}
\sum_{e \in \vec{E}} (a \otimes b) \# \p_{\e}(Q) \# (X_{\e} \otimes 1 - 1 \otimes X_{\e}) &= 0 \text{ which implies }\\
\sum_{e \in \vec{E}} [a\p_{\e}(Q)b, JX_{\e^{\op}}J] &= 0
\end{align*}
where the last equation is with $a\p_{\e}(Q)b$ viewed as a finite rank operator on $\cH$.  Note that under this identification, $a\p_{\e}(Q)b : Jp_{s(\e^{\op})}J\cH \rightarrow Jp_{t(\e^{\op})}J\cH$, so applying Lemma \ref{lem:com2} (for $\e^{\op}$) to $a\p_{\e}(Q)b$  gives $a\p_{\e}(Q)b = 0$ as desired.
\end{proof}

We now prove Theorem \ref{thm:nonzero}.  The proof will be an amalgamated version of the proof of the corresponding theorem in \cite{1407.5715}.

\begin{proof}[Proof of Theorem \ref{thm:nonzero}]
Suppose $p_{\alpha}$ is in the statement of Theorem \ref{thm:nonzero} with $Q \in \cA$ nonzero and $Qp_{\alpha} = Q$.  Suppose further that $Qa = 0$ for $a = a^{*}$ and $p_{\alpha}ap_{\alpha} = a$.  Define the degree of $Q$ to be the length of the longest monomial in the $X_{\e}$'s with nonzero coefficient in the expansion of $Q$ into monomials.  If $Q$ is a linear combination of the elements $\set{p_{\gamma}}{\gamma \in V}$, then $Q$ is said to have degree 0.  We will prove the result by induction on the degree of $Q$.  The result is trivial for $Q$ of degree 0 so we will now assume that the degree of $P$ is at least 1.

By expanding $Q = \sum_{\beta \in V} p_{\beta}Q$, we see that $Qa = 0$ if and only if $p_{\beta}Qa = 0$ for all $\beta$, so we assume that $Q = p_{\beta}Q$ for some $\beta \in V$.  Let $\tilde{q}$ be the projection onto the kernel of $Q$.  Since $Qp_{\alpha} = Q$, $\Tr(\tilde{q}) \geq \sum_{\gamma \in V\setminus\{\alpha\}} \mu(\gamma)$.  Assume that $a \neq 0$.  Then since $a$ is supported under $p_{\alpha}$ we have $\Tr(\tilde{q}) > \sum_{\gamma \in V\setminus\{\alpha\}} \mu(\gamma)$.  Let $q$ be the projection onto the kernel of $P^{*}$.  Since $\Tr(q) = \Tr(\tilde{q})$, it follows from minimality of $\alpha$, that $qp_{\gamma} \neq 0$ for all $\gamma \in V$.  In particular, $qp_{\beta} \neq 0$.

Note that $qP = 0$.  By Theorem \ref{thm:zero}  $q\p_{\e}(Q)a = 0$.  This implies that 
$$
0 = (\Tr \otimes \idd)(q\p_{\e}(Q)a) = (\Tr \otimes \idd)(q\p_{\e}(Q))a = 0
$$
for all $\e$.  Choose $\e$ so that $X_{\e}$ is the leftmost term in at least one monomial in the sum representing $P$. We have $\Tr(qp_{\beta}) \neq 0$, so it follows that $(\Tr \otimes \idd)(q'\p_{\e}(Q)) \in \cA$, is nonzero, has right support under $p_{\alpha}$, and has degree strictly less than that of $Q$.  Since $a \neq 0$, this contradicts the assumption that for any nonzero $P$ of strictly smaller degree than $Q$ with right support under $p_{\alpha}$, $Pa = 0$ implies $a = 0$.  Therefore, $a$ had to already be zero in the previous paragraph.
\end{proof}

\subsection{Algebracity}\label{sec:alg}

We will now turn toward proving Theorem \ref{thm:algebraic}.  The majority of the discussion below comes from and is inspired by \cite{MR3356937}.  To begin, we need to set up some notation.

\begin{defn}
\begin{enumerate}
\item If $R$ is a ring, we define the ring of formal power series in the variables $z_{1}, \dots, z_{n}$ (denoted as $R[[z_{1}, \dots, z_{n}]]$) to be the set of formal sums of the form 
$$
P =  \sum_{k_{1}, \dots k_{n} \geq 0}  P_{k_{1}, \dots, k_{n}}z_{1}^{k_{1}}\cdots z_{n}^{k_{n}}
$$
where $P_{k_{1}, \dots, k_{n}} \in \R$.  Addition of two power series is defined term-wise.  Multiplication is defined by
$$
(PQ)_{k_{1}, \dots, k_{n}} = \sum_{j = 1}^{n} \sum_{\ell_{j} = 0}^{k_{j}} P_{\ell_{1}, \dots, \ell_{n}}Q_{k_{1} - \ell_{1}, \dots, k_{n} - \ell_{n}}
$$ 
We will let $R[z_{1}, \dots, z_{n}]$ denote the polynomials in the variables $z_{1}, \dots, z_{n}$.  Note that $P \in R[z_{1}, \cdots, z_{n}]$ if and only if $P_{k_{1}, \dots, k_{n}} = 0$ for all but finitely many $(k_{1}, \dots, k_{n})$.

\item If $R$ is an integral domain, we say that $P \in R[[z_{1}, \dots, z_{n}]]$ is \emph{algebraic} if there exist $Q_{0}, Q_{1}, \dots, Q_{n} \in R[z_{1}, \dots, z_{n}]$ not all zero satisfying 
$$
\sum_{j=0}^{n} Q_{j}P^{j} = 0
$$
The algebraic elements in $R[[z_{1}, \dots, z_{n}]]$ form a ring.  This ring will be denoted as $R_{alg}[[z_{1}, \dots, z_{n}]]$.
\end{enumerate}  
\end{defn}

Given Neumann algebra, $\cN$, a faithful positive linear functional $\phi$ on $\cN$, and a self-adjoint $a \in \cN$, there exists a unique positive measure $\mu_{a}$, supported on the spectrum of $a$, which satisfies
$$
\phi(a^{n}) = \int_{\R} x^{n}d\mu_{a}(x).
$$
Recall that the \emph{Cauchy Transform} of a measure, $\mu$, is defined as 
$$
G_{\mu}(z) = \int_{\R}\frac{d\mu(t)}{z - t}.
$$
If $\mu$ is compactly supported, it is straightforward to see that $G_{\mu}(z)$ has a Laurent expansion about $z = 0$, and that $\lim_{z \rightarrow \infty} G_{\mu}(z) = 0$.  Therefore the Laurent series for $G_{\mu}(z)$ is an element of $\C[[\frac{1}{z}]]$ so it makes sense to ask if $G_{\mu_{a}}$ is algebraic for any $a \in \cA$.

In order to answer this question, we will need a notion of rational power series
\begin{defn}
\begin{enumerate}
\item Let $R$ be a unital subring of the unital ring, $S$.  We say that $R$ is rationally closed if whenever $A$ is an $n\times n$ matrix with entries in $R$ and invertible in $M_{n}(S)$, then $A^{-1} \in M_{n}(R)$.  The rational closure of $R$ is the smallest rationally closed subring of $S$ containing $R$.

\item A power series $P \in R[[z_{1}, \dots, z_{n}]]$ is rational if $P$ is in the rational closure of $R[z_{1}, \dots, z_{n}]$, viewed as a subring of $ R[[z_{1}, \dots, z_{n}]]$.  The set of rational power series will be denoted as $R_{rat}[[z_{1}, \dots, z_{n}]]$
\end{enumerate}
\end{defn}

The following lemma from \cite{MR3356937} will be helpful.

\begin{lem}[\cite{MR3356937}]\label{lem:SS1}
Suppose that  $\cN$ is a von Neumann algebra with faithful trace, $\tau$.  Given $x_{1}, x_{2}, \dots, x_{n}, \dots \in \cN$, and $\sum_{n = 0}^{\infty}x_{n}z^{n}$, we define 
$$
\Tr_{\cN}\left(\sum_{n = 0}^{\infty}x_{n}z^{n}\right) = \sum_{n=0}^{\infty} \tau(x_{n})z^{n} \subset \C[[z]].
$$
Suppose that $\cA$ is a subalgebra of $\cN$ and that
$$
\Tr_{\cN}(\cA_{rat}[[z]]) \subset \C_{alg}[[z]],
$$
then for every self-adjoint matrix $A \in M_{n}(\cA)$, $G_{\mu_{A}}$ is algebraic.
\end{lem}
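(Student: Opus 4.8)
The plan is to follow Shlyakhtenko and Skoufranis \cite{MR3356937}: realize $G_{\mu_{A}}$ as the matrix-averaged trace of the resolvent of $A$, expand that resolvent as a formal power series in $w=1/z$, observe that the resulting series is \emph{rational} with entries in $\cA$, and then feed it through the hypothesis one matrix entry at a time. So first I would record the elementary identity relating the Cauchy transform to the resolvent. Writing $\mu_{A}$ for the spectral distribution of the self-adjoint $A\in M_{n}(\cA)$ with respect to $\tau\otimes\tr_{n}$, one has $\int t^{k}\,d\mu_{A}(t)=(\tau\otimes\tr_{n})(A^{k})$ for every $k\ge 0$, so that for $|z|>\|A\|$
$$
G_{\mu_{A}}(z)=\sum_{k\ge 0}z^{-k-1}\int t^{k}\,d\mu_{A}(t)=(\tau\otimes\tr_{n})\bigl((zI_{n}-A)^{-1}\bigr).
$$
Substituting $w=1/z$ and reading this as an identity of formal power series in $\C[[w]]$ gives $G_{\mu_{A}}(1/w)=(\tau\otimes\tr_{n})\bigl(w(I_{n}-wA)^{-1}\bigr)$, where $(I_{n}-wA)^{-1}$ denotes the geometric series $\sum_{k\ge 0}w^{k}A^{k}$, a well-defined element of $M_{n}(\cA[[w]])$ which one verifies telescopically to be a two-sided inverse of $I_{n}-wA$.

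Next I would establish that every entry of $w(I_{n}-wA)^{-1}$ lies in $\cA_{rat}[[w]]$, and then push $\tau$ through. The matrix $I_{n}-wA$ has entries in $\cA[w]\subseteq\cA_{rat}[[w]]$ and is invertible in $M_{n}(\cA[[w]])$; since $\cA_{rat}[[w]]$ is by definition a rationally closed subring of $\cA[[w]]$, it follows that $(I_{n}-wA)^{-1}\in M_{n}(\cA_{rat}[[w]])$. As $\cA_{rat}[[w]]$ is a ring containing the scalar monomial $w$, every entry of $w(I_{n}-wA)^{-1}$ — in particular each of its $n$ diagonal entries — lies in $\cA_{rat}[[w]]$. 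By the hypothesis $\Tr_{\cN}(\cA_{rat}[[w]])\subseteq\C_{alg}[[w]]$ (the lemma's assumption, with the dummy variable renamed), applying $\tau$ coefficient-by-coefficient to any element of $\cA_{rat}[[w]]$ yields an algebraic power series; hence $\tau$ of each diagonal entry of $w(I_{n}-wA)^{-1}$ lies in $\C_{alg}[[w]]$.

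Since $\C_{alg}[[w]]$ is a ring, the normalized finite sum
$$
G_{\mu_{A}}(1/w)=\tfrac{1}{n}\sum_{i=1}^{n}\tau\bigl([\,w(I_{n}-wA)^{-1}\,]_{ii}\bigr)
$$
is again algebraic, i.e. $G_{\mu_{A}}\in\C_{alg}[[1/z]]$ after renaming $w=1/z$, which is exactly the assertion that the Cauchy transform of $\mu_{A}$ is algebraic. (The appeal to the hypothesis is only in a single variable here, so no multivariable subtleties arise.)

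The step I expect to need the most care is the rationality claim: one must be sure that formally inverting the polynomial matrix $I_{n}-wA$ does not escape the rational closure $\cA_{rat}[[w]]$. This is precisely what "rationally closed" provides, once the two small points it rests on are checked — that $I_{n}-wA$ is genuinely invertible inside $M_{n}(\cA[[w]])$, which is immediate from the $w$-adic convergence of $\sum_{k}w^{k}A^{k}$, and that the rational closure is stable under multiplication by the scalar $w$ so the extra factor does no harm. The remaining ingredients — the moment identity $\int t^{k}\,d\mu_{A}=(\tau\otimes\tr_{n})(A^{k})$, the Laurent expansion of $G_{\mu_{A}}$ at infinity, and the bookkeeping between $z$ and $w=1/z$ — are routine.
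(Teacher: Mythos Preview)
The paper does not prove this lemma; it is quoted verbatim from \cite{MR3356937} and used as a black box. Your argument is correct and is exactly the one given in that reference: write the Cauchy transform as the normalized matrix trace of the resolvent, pass to the variable $w=1/z$ so that $w(I_{n}-wA)^{-1}$ has entries in $\cA_{rat}[[w]]$ by rational closure, apply the hypothesis entrywise, and use that $\C_{alg}[[w]]$ is a ring closed under the substitution $w\mapsto 1/z$. There is nothing to add.
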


As above we assume that $\Gamma$ is a finite graph, and we fix a weighting $\mu$ on $V(\Gamma)$.  Let $\cM = \cM(\Gamma, \mu)$ and $\cA = \C\langle \cC, (X_{\e})_{\e \in \vec{E}}\rangle$.  In order to verify the hypotheses in Lemma \ref{lem:SS1} we need to make use of power series in non-commuting variables.

\begin{defn}
\begin{enumerate}
\item Let $R$ be a ring, and $X = \{x_{1}, \dots, x_{n}\}$ be a finite set, often called an \emph{alphabet}.  A \emph{word} in $X$ is a finite string $x_{i_{1}}, \dots x_{i_{k}}$ and the set of all words in $X$ will be denoted as $W(X)$, and the empty word will be denoted $\mathbf{1}$.

\item The non-commutative power series ring, $R\langle \langle X \rangle \rangle$ consists of formal sums of the form
$$ 
P = \sum_{w \in W(X)}P_{w}w 
$$
for $P_{w} \in R$.  Addition of elements in $R\langle \langle X \rangle \rangle$ is done coordinate wise.  Multiplication of elements $P$ and $Q$ in $R\langle \langle X \rangle \rangle$ is defined as follows
$$
(PQ)_{w} = \sum_{\substack{ u, v \in W(X) \\ w = uv}} P_{u}Q_{v}
$$
The non-commutative polynomials in $X$, denoted by $R\langle X \rangle$ is the subring of $R\langle \langle X \rangle \rangle$ consisting of elements of the form
$$
P = \sum_{w \in W(X)}P_{w}w 
$$
where $P_{w} = 0$ for all but finitely many $w \in W(X)$

\item If $Z = \{z_{1}, \dots, z_{m}\}$ is an alphabet disjoint from $X$, a proper algebraic system over $R$ is a set of equations
$$
z_{i} = p_{i}(x_{1}, \dots, x_{n}, z_{1}, \dots, z_{m})
$$
where $p_{i} \in R\langle X \cup Z \rangle$ has no constant term, nor any term of the form $\alpha z_{j}$ for $\alpha \in R$ and $j \in \{1, \dots, m\}$.  A \emph{solution} to a proper algebraic system is $(P_{1}, \dots, P_{m}) \in R\langle \langle X \rangle \rangle^{m}$ with $(P_{i})_{\mathbf{1}} = 0$, satisfying
$$
P_{i} = p_{i}(x_{1}, \dots, x_{n}, P_{1}, \dots, P_{m})
$$
for each each $i \in \{1, \dots, m\}$.

\item We say that $P \in  R\langle \langle X \rangle \rangle$ is \emph{algebraic} if $P - P_{\mathbf{1}}\mathbf{1}$ is a component of a solution of a proper algebraic system.  The set of the algebraic elements in $R\langle \langle X \rangle \rangle$ will be denoted by  $R_{alg}\langle \langle X \rangle \rangle$.  $R_{alg}\langle \langle X \rangle \rangle$ is a subring of $R\langle \langle X \rangle \rangle$. 

\item We say that $P \in  R\langle \langle X \rangle \rangle$ is \emph{rational} if $P$ is in the rational closure of $R\langle X \rangle$ in $R\langle \langle X \rangle \rangle$.  The set of rational elements in $R\langle \langle X \rangle \rangle$ will be denoted as $R_{alg}\langle \langle X \rangle \rangle$
\end{enumerate}
\end{defn}

Fix an alphabet $\mathscr{X} = \set{\e}{\e \in \vec{E}} \cup \set{\alpha}{\alpha \in V}$, and let $\cX = \set{X_{\e}}{\e \in \vec{E}} \cup \set{p_{\alpha}}{\alpha \in V}$ so that $\cA = \C\langle \cX \rangle$.  If $w = \e_{1}\dots \e_{n}$, then set $w(\cX) = X_{\e_{1}}\cdots X_{\e_{n}}$.   For $\alpha \in V$, set $P^{\alpha} \in \C\langle \langle \mathscr{X} \rangle \rangle$ to be the following element:
$$
P^{\alpha} = \sum_{w \in W(\mathscr{X})} P^{\alpha}_{w}w \text{ where } 
\begin{cases} P^{\alpha}_{\mathbf{1}} &= 0\\
P^{\alpha}_{\gamma} &= \delta_{\gamma, \alpha}\mu(\alpha) \text{ if } \gamma \in V\\
P^{\alpha}_{w} &= 0 \text{ if } p_{\alpha}w(\cX)p_{\alpha} = 0 \\
P^{\alpha}_{w} &= 0 \text{ if } |w| \geq 2 \text{ and } w \text{ contains } \gamma \in V\\
P^{\alpha}_{w} &= \Tr(w(\cX)) \text{ if } p_{\alpha}w(\cX)p_{\alpha} = w(\cX) \neq 0 \text{ and } w = \e_{1}\dots\e_{n}
\end{cases}
$$
Let $L(\alpha)$ denote the loops in $\vec{\Gamma}$ which are based at $\alpha$.   A key point in our analysis is the following lemma.  
 
 \begin{lem}
 $P^{\alpha} - \mu(\alpha)\alpha$ is algebraic for each $\alpha \in V$.
 \end{lem}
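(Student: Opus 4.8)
The idea is that $P^{\alpha}-\mu(\alpha)\alpha$ is exactly the generating series of the traces of loops based at $\alpha$, and that Lemma~\ref{lem:rec} is a ``remove the last edge'' recursion for this series. The recursion has the shape of a proper algebraic system, and algebraicity then follows directly from the definition. For each $v\in V$ write
$$
G_{v}\;=\;P^{v}-\mu(v)v\;=\;\sum_{\substack{w=\e_{1}\cdots\e_{n}\ \text{a loop at }v\\ n\geq 2}}\Tr(X_{\e_{1}}\cdots X_{\e_{n}})\,w\ \in\ \C\langle\langle\mathscr{X}\rangle\rangle ,
$$
so that $(G_{v})_{\mathbf 1}=0$ (a loop of length $0$ is discarded by the definition of $P^{v}$, one of length $1$ contributes $\Tr(X_{\e})=0$). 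It is convenient to also set $H_{v}=\mu(v)\mathbf 1+G_{v}$, the same series summed over \emph{all} loops at $v$ with the empty loop assigned the coefficient $\Tr(p_{v})=\mu(v)$.

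\textbf{Step 1 (a quadratic recursion for the $H_{v}$).} I claim that
$$
H_{v}\;=\;\mu(v)\mathbf 1\;+\;\sum_{\substack{\e\in\vec{E}\\ t(\e)=v}}\frac{1}{\sqrt{\mu(s(\e))\,\mu(v)}}\ H_{v}\,\e^{\op}\,H_{s(\e)}\,\e .
$$
This is proved by comparing, for a loop word $w=\e_{1}\cdots\e_{n}$ based at $v$, the coefficient of $w$ on the two sides. On the right, the last letter of any monomial is forced to be the summation index $\e$, so $\e=\e_{n}$; the interior letter $\e^{\op}=\e_{n}^{\op}$ singles out a position $j$ with $\e_{j}=\e_{n}^{\op}$; the factor $H_{v}$ absorbs the subword $\e_{1}\cdots\e_{j-1}$ (a, possibly empty, loop at $v$) and $H_{s(\e)}$ the subword $\e_{j+1}\cdots\e_{n-1}$ (a, possibly empty, loop at $s(\e_{n})$). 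Summing these contributions over all admissible $j$ reproduces: for $n\geq 3$, the three terms of Lemma~\ref{lem:rec} (the interior positions $2\leq j\leq n-2$ give the first sum; $j=1$ gives the last term, using the empty-loop coefficient $\mu(v)$ and $\mu(v)/\sqrt{\mu(s(\e))\mu(v)}=\sqrt{\mu(v)/\mu(s(\e))}$; $j=n-1$ gives the middle term, using $\mu(s(\e))$ and $\mu(s(\e))/\sqrt{\mu(s(\e))\mu(v)}=\sqrt{\mu(s(\e))/\mu(v)}$); for $n=2$, the value $\sqrt{\mu(s(\e_{n}))\mu(t(\e_{n}))}$ when $\e_{2}=\e_{1}^{\op}$ (this is $\Tr(X_{\e_{1}}X_{\e_{2}})$ by Remark~\ref{rem:polar}) and $0$ otherwise; for $n=1$, $0$; and for $n=0$, the coefficient $\mu(v)$.

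\textbf{Step 2 (passing to a proper algebraic system).} Substituting $H_{u}=G_{u}+\mu(u)\mathbf 1$ into the identity of Step~1 and cancelling $\mu(v)\mathbf 1$ gives, for every $v\in V$,
\begin{align*}
G_{v}\;=\;\sum_{\substack{\e\in\vec{E}\\ t(\e)=v}}\frac{1}{\sqrt{\mu(s(\e))\mu(v)}}\Bigl(&G_{v}\,\e^{\op}\,G_{s(\e)}\,\e\;+\;\mu(s(\e))\,G_{v}\,\e^{\op}\,\e\\
&{}+\;\mu(v)\,\e^{\op}\,G_{s(\e)}\,\e\;+\;\mu(v)\mu(s(\e))\,\e^{\op}\,\e\Bigr).
\end{align*}
Reading the right-hand sides as elements of $\C\langle\,\mathscr{X}\cup\{z_{u}:u\in V\}\,\rangle$ (with $z_{u}$ in place of $G_{u}$), every monomial contains at least the length-two word $\e^{\op}\e$, so there is no constant term, and every occurrence of a $z_{u}$ is multiplied by a nonempty word, so no monomial is a scalar multiple of a single $z_{u}$. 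Hence this is a proper algebraic system over $\C$ in the alphabet $\mathscr{X}$, of which $(G_{v})_{v\in V}$ is a solution with every component having zero constant term. Since $(P^{\alpha}-\mu(\alpha)\alpha)_{\mathbf 1}=0$, this says precisely that $P^{\alpha}-\mu(\alpha)\alpha=G_{\alpha}$ is algebraic for each $\alpha\in V$.

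\textbf{Where the work is.} Bookkeeping aside, the only genuine point is Step~1: checking that the three separate summands of Lemma~\ref{lem:rec}, the two base cases $n\leq 2$, and the empty-loop convention all collapse into the single uniform quadratic recursion, and that the weight factors $1/\sqrt{\mu(s(\e))\mu(v)}$, $\sqrt{\mu(s(\e))/\mu(v)}$ and $\sqrt{\mu(v)/\mu(s(\e))}$ are mutually consistent. This is a routine coefficient computation; everything downstream is formal manipulation of power series together with the definition of an algebraic element.
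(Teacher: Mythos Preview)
Your proof is correct and is essentially the paper's argument: both use Lemma~\ref{lem:rec} to exhibit $(P^{\alpha}-\mu(\alpha)\alpha)_{\alpha\in V}$ as the solution of the same proper algebraic system over $\C$, with right-hand sides built from the four monomial types $z_{v}\e^{\op}z_{s(\e)}\e$, $z_{v}\e^{\op}\e$, $\e^{\op}z_{s(\e)}\e$, $\e^{\op}\e$. Your packaging via the auxiliary series $H_{v}=\mu(v)\mathbf{1}+G_{v}$ into a single quadratic recursion before expanding is a tidy touch (and your boundary coefficients $\sqrt{\mu(s(\e))/\mu(v)}$ and $\sqrt{\mu(v)/\mu(s(\e))}$ agree with Lemma~\ref{lem:rec}), but the approach is the same.
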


\begin{proof}

We note that 
$$
P^{\alpha} - \mu(\alpha)\alpha = \sum_{\e_{1}\dots\e_{n} \in L(\alpha)} \Tr(X_{\e_{1}}\dots X_{\e_{n}})\e_{1}\cdots \e_{n}
$$
Using Lemma \ref{lem:rec}, this series can be rewritten as:
\begin{align*}
& \sum_{\substack{t_{\e_{n}} = \alpha \\ \e_{1}\dots\e_{k-1} \in L(\alpha) \\ \e_{k+1}\dots \e_{n-1} \in L(s(\e_{n-1}))}}  \frac{1}{\sqrt{\mu(s(\e_{n})) \mu(\alpha)}}[\Tr(X_{\e_{1}}\cdots X_{\e_{k-1}})\e_{1}\cdots \e_{k-1}]\e_{n}^{\op}[\Tr(X_{\e_{k+1}}\cdots X_{\e_{n-1}}) \e_{k+1}\cdots \e_{n-1}]\e_{n}\\
&+  \sum_{\e_{1}\dots\e_{n-2} \in L(\alpha)}  \sqrt{\frac{\mu(\alpha)}{\mu(s(\e_{n}))}} [\Tr(X_{\e_{1}}\cdots X_{\e_{n-2}})\e_{1}\dots \e_{n-2}]\e_{n}^{\op}\e_{n}\\
& +  \sum_{\e_{2}\dots\e_{n-1} \in L(s(\e_{n}))}  \sqrt{\frac{\mu(s(\e_{n}))}{\mu(\alpha)}} \e_{n}^{\op}[\Tr(X_{\e_{2}}\cdots X_{\e_{n-1}})\e_{2}\dots \e_{n-1}]\e_{n}\\
& + \sqrt{\mu(\alpha)\mu(s(\e))}\e_{n}^{\op}\e_{n}
\end{align*}
This means that $(P^{\alpha} - \mu(\alpha)\alpha)_{\alpha \in V}$ is a solution to the proper algebraic system
$$
z_{\alpha} = \sum_{\beta \sim \alpha} \sum_{\substack{t(\e) = \alpha \\ s(\e) = \beta}}  \sqrt{\mu(\alpha)\cdot\mu(\beta)}\e^{\op}\e + \sqrt{\frac{\mu(\alpha)}{\mu(\beta)}}  z_{\alpha}\e^{\op}\e +\frac{1}{\sqrt{\mu(\beta)\mu(\alpha)}}z_{\alpha}\e^{\op}z_{\beta}\e +   \sqrt{\frac{\mu(\beta)}{\mu(\alpha)}} \e^{\op} z_{\beta}\e
$$
Where $\alpha$ ranges through all of $V$.
\end{proof}

\begin{cor}
$P^{\alpha}$ is algebraic for all $\alpha \in V$. 
\end{cor}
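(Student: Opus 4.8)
The plan is to obtain this immediately from the preceding lemma together with the closure properties of algebraic power series recorded in the definition. First I would write
$$
P^{\alpha} = \bigl(P^{\alpha} - \mu(\alpha)\alpha\bigr) + \mu(\alpha)\alpha .
$$
By the lemma just proved the first summand is algebraic, so it remains only to check that $\mu(\alpha)\alpha$ is algebraic and that a sum of algebraic elements is again algebraic.

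The second fact is part of the definition: $\C_{alg}\langle\langle\mathscr{X}\rangle\rangle$ is a subring of $\C\langle\langle\mathscr{X}\rangle\rangle$. For the first, I would observe that $\mu(\alpha)\alpha$ is a scalar multiple of a single letter of the alphabet $\mathscr{X}$, hence lies in $\C\langle\mathscr{X}\rangle$ and has vanishing constant term; it is therefore a solution of the one-variable system $z = \mu(\alpha)\alpha$, which is proper since its right-hand side has neither a constant term nor a term of the form $c\,z$ with $c \in \C$. Thus $\mu(\alpha)\alpha$ is algebraic. Since $P^{\alpha}_{\mathbf{1}} = 0$, we have $P^{\alpha} = P^{\alpha} - P^{\alpha}_{\mathbf{1}}\mathbf{1}$, and combining the two facts shows that $P^{\alpha}$ is algebraic.

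This is a bookkeeping corollary and I do not expect any substantive obstacle; the only point deserving a moment's attention is confirming that the trivial system $z = \mu(\alpha)\alpha$ genuinely meets the definition of a \emph{proper} algebraic system, so that polynomials do count as algebraic elements, after which the ring structure of $\C_{alg}\langle\langle\mathscr{X}\rangle\rangle$ finishes the argument. One could alternatively substitute $z_{\gamma} = z'_{\gamma} - \mu(\gamma)\gamma$ into the proper algebraic system appearing in the proof of the lemma and verify that $(P^{\alpha})_{\alpha \in V}$ solves the resulting proper system directly, but the ring-theoretic route is cleaner.
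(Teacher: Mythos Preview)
Your argument is correct and is exactly the deduction the paper has in mind: the corollary is stated without proof because it follows immediately from the lemma together with the fact that $\C_{alg}\langle\langle\mathscr{X}\rangle\rangle$ is a subring containing $\C\langle\mathscr{X}\rangle$. Your verification that $\mu(\alpha)\alpha$ solves a proper algebraic system is the right way to make this explicit.
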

We will now let $P \in \cA_{rat}[[z]]$, and we set $P^{\Gamma} = \sum_{\alpha \in V}P^{\alpha}$ and note that $P^{\Gamma} \in \C_{alg}\langle \langle X \rangle \rangle$.  There is a homomorphism $\pi: \C\langle \mathscr{X} \rangle \rightarrow \cA$ which is uniquely determined by $\pi(\e) = X_{\e}$ for all $\e \in \vec{E}$ and $\pi(\alpha) = p_{\alpha}$ for all $\alpha \in V$.  This map extends to a map (also denoted $\pi$) from $\C\langle \mathscr{X} \rangle [[z]] \rightarrow \cA [[z]]$.  We can find $\overline{P} \in \C\langle \mathscr{X} \rangle_{rat}[[z]]$ such that $\pi(\overline{P}) = P$.

In \cite{MR2048733} there is a canonical way to realize $\C\langle \mathscr{X} \rangle_{rat}[[z]] \subset \C(z)_{rat}\langle \langle \mathscr{X} \rangle \rangle$ where $\C(z)$ is the quotient field of $\C[z]$.  Note that we may realize $P^{\Gamma} \in \C(z)_{alg}\langle \langle \mathscr{X} \rangle \rangle$.  If we consider the Hadamard product
$$
\overline{P} \odot P^{\Gamma} = \sum_{w \in X} \overline{P}_{w}\cdot P^{\Gamma}_{w} \cdot w,
$$
then $\overline{P} \odot P^{\Gamma} \in \C(z)_{alg}\langle \langle X \rangle \rangle$ by \cite{MR0142781}.  

Observe that if we write $P = \sum_{m=0}^{\infty} [p_{m, \vec{E}}((X_{\e})_{\e \in \vec{E}}) + p_{m, V}((p_{\alpha})_{\alpha \in V})]z^{m}$ for polynomials $p_{m, \vec{E}}$ and $p_{m, V}$, then 
$$
\overline{P} = \sum_{m=0}^{\infty} [p_{m, \vec{E}}((\e)_{\e \in \vec{E}}) + p_{m, V}((\alpha)_{\alpha \in V}) + q_{m}((\e)_{\e \in \vec{E}}, (\alpha)_{\alpha \in V})]z^{m}
$$
where $q_{m}((X_{\e})_{\e \in \vec{E}}, (p_{\alpha})_{\alpha \in V}) = 0$.  If we let $\coeff(p, w)$ be the coefficient of $w$ in the expansion of $p$ into monomials, we see that 

\begin{align*}
\overline{P} \odot P^{\Gamma} &= \sum_{\alpha \in V} \mu(\alpha)\left(\sum_{m=0}^{\infty}[\coeff(p_{m, V}, \alpha) + \coeff(q_{m}, \alpha)]z^{m}\right)\alpha\\
&+ \sum_{\alpha \in V} \sum_{\e_{1}\dots \e_{n} \in L(\alpha)} \Tr(X_{\e_{1}}\cdots X_{\e_{n}})\left(\sum_{m=0}^{\infty}[\coeff(p_{m, \vec{E}}, \e_{1}\cdots \e_{n}) + \coeff(q_{m}, \e_{1}\cdots \e_{n})]z^{m}\right)\e_{1}\dots \e_{n} 
\end{align*}

If one inserts 1 for each $\e$ and $\alpha$ then we get
\begin{align*}
&\sum_{\alpha \in V} \mu(\alpha)\left(\sum_{m=0}^{\infty}[\coeff(p_{m, V}, \alpha) + \coeff(q_{m}, \alpha)]z^{m}\right)\\
&+ \sum_{\alpha \in V} \sum_{\e_{1}\dots \e_{n} \in L(\alpha)} \Tr(X_{\e_{1}}\cdots X_{\e_{n}})\left(\sum_{m=0}^{\infty}[\coeff(p_{m, \vec{E}}, \e_{1}\cdots \e_{n}) + \coeff(q_{m}, \e_{1}\cdots \e_{n})]z^{m}\right)\\
&= \sum_{m=0}^{\infty} \Tr(p_{m, \vec{E}}((X_{\e})_{\e \in \vec{E}}) + p_{m, V}((p_{\alpha})_{\alpha \in V}) + q_{m}((X_{\e})_{\e \in \vec{E}}, (p_{\alpha})_{\alpha \in V}))z^{m}\\
&= \sum_{m=0}^{\infty} \Tr(p_{m, \vec{E}}((X_{\e})_{\e \in \vec{E}}) + p_{m, V}((p_{\alpha})_{\alpha \in V})))z^{m}\\
&= \Tr_{\cM}(P)
\end{align*}
We see that $\Tr_{\cM}(P)$ is algebraic from \cite{MR2048733}.  This discussion proves Theorem \ref{thm:algebraic}.

\begin{cor}\label{cor:alg}  Let $Q \in \cA$ be self-adjoint with $p_{\alpha}Q p_{\alpha} = Q$ and $Q$ not a scalar multiple of $p_{\alpha}$.  If $\mu(\alpha) = \min\set{\mu(\beta)}{\beta \in V}$, then the law of $Q$ in $(p_{\alpha}\cS(\Gamma, \mu)p_{\alpha}, \Tr)$ is absolutely continuous with respect to Lebesgue measure.   Moreover, the spectrum of $Q$ in $p_{\alpha}\cS(\Gamma, \mu)p_{\alpha}$ is a finite union of disjoint closed intervals, each of which has measure in $(0, \mu(\alpha)] \cap \Z[\set{\mu(\beta)}{\beta \in V}]$. 

\end{cor}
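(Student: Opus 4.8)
The plan is to combine three earlier results: the algebraicity of the Cauchy transform (Theorem~\ref{thm:algebraic}), the absence of atoms (Corollary~\ref{cor:atomless}), and the computation of $K_0(\cS(\Gamma,\mu))$ (Corollary~\ref{cor:K-theory}). Write $\cN = p_\alpha\cS(\Gamma,\mu)p_\alpha$ and $\tau = \Tr(p_\alpha\,\cdot\,p_\alpha)$; since the expectation $E$ is faithful (Theorem~\ref{thm:freeexpectation}), $\tau$ is a faithful trace on $\cN$ of total mass $\mu(\alpha)$, so the law $\mu_Q$ of $Q$ with respect to $\tau$ has support exactly $\sigma_{\cN}(Q)$. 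First I would invoke Theorem~\ref{thm:algebraic} (with $n=1$, applied to the self-adjoint element $Q\in M_1(\cA)$) to see that $G:=G_{\mu_Q}$ is algebraic; passing from the law of $Q$ in $\cS(\Gamma,\mu)$ to its law in the corner $\cN$ only alters it by an atom at $0$, which preserves algebraicity. Fix a nonzero $P(z,w)$ with $P(z,G(z))=0$ and let $D\subset\C$ be the finite set consisting of the zeros of the leading coefficient of $P$ in $w$ together with the zeros of the discriminant of $P$ in $w$; off $D$ every branch of the algebraic function cut out by $P$ is locally holomorphic, so $G$, which is holomorphic on the upper half-plane, extends holomorphically across each point of $\R\setminus D$.

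By the Stieltjes--Perron inversion formula this forces $\mu_Q$ to have no singular continuous part, to be absolutely continuous on $\R\setminus D$ with real-analytic density $f(x)=-\tfrac1\pi\Im G(x+i0)$, and to have its atoms (if any) located in the finite set $D$. Corollary~\ref{cor:atomless}, valid here since $Q$ is not a scalar multiple of $p_\alpha$, eliminates the atoms, so $\mu_Q=f\,dx$ is absolutely continuous with respect to Lebesgue measure. Next I would read off the support. On each of the finitely many open intervals comprising $\R\setminus D$ the real-analytic function $f$ is either identically zero or has only isolated zeros, so $\supp\mu_Q$ is the union of the closures of the intervals on which $f\not\equiv0$, together with possibly finitely many points of $D$. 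After amalgamating abutting pieces, $\sigma_{\cN}(Q)=\supp\mu_Q=[a_1,b_1]\sqcup\cdots\sqcup[a_m,b_m]$ is a finite disjoint union of nondegenerate closed intervals.

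Each $[a_i,b_i]$ is clopen in $\sigma_{\cN}(Q)$, so $\chi_{[a_i,b_i]}$ is continuous there and $e_i:=\chi_{[a_i,b_i]}(Q)$ is a projection in $C^*(Q,p_\alpha)\subseteq\cN$ with $\Tr(e_i)=\mu_Q([a_i,b_i])>0$ and $\sum_i\Tr(e_i)=\Tr(p_\alpha)=\mu(\alpha)$, giving $0<\mu_Q([a_i,b_i])\le\mu(\alpha)$. Finally, $[e_i]\in K_0(\cS(\Gamma,\mu))$, which by Corollary~\ref{cor:K-theory} is the free abelian group $\Z\{[p_\beta]:\beta\in V\}$; since $\Gamma$ is finite, $\Tr$ is finite on all projections over $\cS(\Gamma,\mu)$ and hence descends to a homomorphism $K_0(\cS(\Gamma,\mu))\to\R$ with $[p_\beta]\mapsto\mu(\beta)$. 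Writing $[e_i]=\sum_{\beta\in V} n^{(i)}_\beta[p_\beta]$ with $n^{(i)}_\beta\in\Z$ then yields $\mu_Q([a_i,b_i])=\sum_{\beta\in V} n^{(i)}_\beta\mu(\beta)\in\Z[\{\mu(\beta):\beta\in V\}]$, which is the claim.

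I expect the main obstacle to be the complex-analytic step: extracting from the purely formal statement ``$G$ is algebraic'' the analytic facts that $\mu_Q$ has no singular continuous part, has real-analytic density off a finite set, and is supported on finitely many closed intervals --- in particular, justifying the analytic continuation of the Cauchy transform across $\R\setminus D$ and controlling the density near the exceptional set $D$ (where $f$ may a priori blow up but must stay integrable, and where gaps in the support may or may not close). Once the interval structure is in place, the arithmetic statement is a short $K$-theoretic consequence.
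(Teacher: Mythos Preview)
Your proposal is correct and is precisely the argument the paper leaves implicit: Corollary~\ref{cor:alg} is stated without proof, as the evident combination of Theorem~\ref{thm:algebraic} (algebraic Cauchy transform), Corollary~\ref{cor:atomless} (no atoms), and Corollary~\ref{cor:K-theory} (the $K_0$ computation together with the trace homomorphism). Your identification of the complex-analytic step --- deducing from algebraicity of $G_{\mu_Q}$ that the support is a finite union of intervals with real-analytic density off a finite set, via Puiseux expansions at the finitely many branch points --- as the only nontrivial ingredient is exactly right, and the $K$-theoretic bookkeeping for the interval masses is handled correctly.
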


Since the Cauchy transform $G_{\mu_{P}}$ is algebraic for any self-adjoint $P \in M_{n}(\cA)$, we can deduce results stating that the law of any positive $P \in M_{n}(\cA)$ can not have a significant portion of mass near 0 (provided $\mu_{P}$ has no atom at 0). The approach is exactly the same as \cite{MR3356937}, Theorem 5.17.  Given a finite measure, $\mu$ on $\R$, the spectral density function $F_{\mu}$ is defined by $F_{\mu}(t) = \mu((-\infty, t])$

\begin{thm}\label{thm:entropy}
Let $P \in M_{n}(\cA)$ be positive, and let $\alpha \in V$ satisfying $\mu(\alpha) = \min\set{\mu(\beta)}{\beta \in V}$.  Then the following hold
\begin{enumerate}  
\item $$\displaystyle \lim_{\delta \rightarrow 0^{+}} \int_{\delta}^{\|P\|} \frac{1}{t}(F_{\mu_{P}}(t) - F_{\mu_{P}}(0)) < \infty.$$

\item $$\displaystyle \lim_{\delta \rightarrow 0^{+}} \int_{\delta}^{\|P\|} \log(t) d\mu_{P}(t) > -\infty.$$  
In particular, if $P \in \cA$, and $p_{\alpha}Pp_{\alpha} = P$, and $\mu'_{P}$ is a the law of $P$ in $p_{\alpha}\cS(\Gamma, \mu)p_{\alpha}$, then
$$
 \int_{0}^{\|P\|} \log(t) d\mu'_{P}(t) > -\infty.
$$

\end{enumerate}
\end{thm}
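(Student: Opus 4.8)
The plan is to follow the proof of \cite{MR3356937}, Theorem~5.17, whose only substantial input is the algebraicity of the Cauchy transform supplied by Theorem~\ref{thm:algebraic}. Write $G = G_{\mu_P}$ for the Cauchy transform of the finite positive measure $\mu_P$ attached to the positive element $P \in M_n(\cA)$ via $\Tr \otimes \tr_n$; as an element of $\C[[1/z]]$ it satisfies $\sum_{j=0}^{m} q_j(z) G^j = 0$ for polynomials $q_0,\dots,q_m \in \C[z]$ not all zero. The first step is to extract from this the analytic-geometric picture. Being a branch of an algebraic curve, $G$ continues to a finitely multivalued analytic function on $\C \setminus F$ for a finite set $F$, and near each $t_0 \in \R$, as well as near $\infty$, the branch determined by the upper half-plane admits a convergent Puiseux expansion $G(z) = \sum_{k \geq k_0} c_k (z - t_0)^{k/N}$ for some $N \geq 1$ and $k_0 \in \Z$. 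Because $\mu_P$ is finite and positive, $\lim_{z \to t_0}(z - t_0)G(z) = \mu_P(\{t_0\})$ exists in $[0,\infty)$, which forces $k_0 \geq -N$; if $k_0 = -N$ this atom equals $c_{-N}$, and if $k_0 > -N$ there is no atom at $t_0$, while $G(z) - \mu_P(\{t_0\})(z - t_0)^{-1}$ has a Puiseux expansion at $t_0$ whose leading exponent $-\theta_0$ satisfies $\theta_0 \in \Q$ and $\theta_0 < 1$. Consequently there are only finitely many atoms, and away from them and away from $F \cap \R$ the density $\rho$ of the absolutely continuous part of $\mu_P$ is real-analytic, while near each remaining point $t_0$ of $F \cap \R$ the Stieltjes inversion formula $\rho(t) = -\frac{1}{\pi}\lim_{y \to 0^+}\Im G(t + iy)$, applied to the Puiseux expansion of the atom-corrected transform, yields $\rho(t) \leq C|t - t_0|^{-\theta_0}$ with $\theta_0 < 1$.

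Granting this, both assertions follow by elementary estimation at the point $t_0 = 0$, the only point of $[0,\|P\|]$ at which a singularity or an atom relevant to the claims can occur (positivity of $P$ gives $\supp \mu_P \subset [0,\infty)$, and if $0 \notin \supp \mu_P$ everything is trivial). For part~(1), for small $t > 0$ we have $F_{\mu_P}(t) - F_{\mu_P}(0) = \mu_P((0,t]) = \int_0^t \rho(s)\,ds \leq \frac{C}{1 - \theta_0}\,t^{1 - \theta_0}$, so $\frac{1}{t}\big(F_{\mu_P}(t) - F_{\mu_P}(0)\big) \leq \frac{C}{1-\theta_0}\,t^{-\theta_0}$, which is integrable on $(0,\epsilon]$ since $\theta_0 < 1$, while on $[\epsilon,\|P\|]$ the integrand is bounded; hence $\delta \mapsto \int_\delta^{\|P\|}\frac{1}{t}\big(F_{\mu_P}(t) - F_{\mu_P}(0)\big)\,dt$ increases as $\delta \downarrow 0$ and is bounded above, so its limit is finite. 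For part~(2), near $0$ we have $d\mu_P(t) = \rho(t)\,dt$ with $\rho(t) \leq C t^{-\theta_0}$ and $\int_0^\epsilon |\log t|\, t^{-\theta_0}\,dt < \infty$ because $\theta_0 < 1$, so $\delta \mapsto \int_\delta^{\|P\|}\log t\,d\mu_P(t)$ decreases as $\delta \downarrow 0$ but stays bounded below, hence converges to a finite limit.

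For the ``in particular'' clause, specialize to $n = 1$. If $P$ is a scalar multiple of $p_\alpha$ the claim is immediate, so suppose otherwise; then by Corollary~\ref{cor:alg} the law $\mu_P'$ of $P$ in $(p_\alpha \cS(\Gamma,\mu)p_\alpha,\Tr)$ is absolutely continuous, hence has no atom at $0$. Since $\cA$ is unital with $1 = \sum_{\beta \in V} p_\beta$ and $P = p_\alpha P p_\alpha$, a comparison of moments gives $\mu_P = \mu_P' + \big(\sum_{\beta \neq \alpha}\mu(\beta)\big)\delta_0$, the surplus mass at $0$ being $\Tr(1 - p_\alpha)$; in particular $\mu_P$ and $\mu_P'$ agree on $(0,\|P\|]$. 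Therefore $\int_0^{\|P\|}\log t\,d\mu_P'(t) = \lim_{\delta \to 0^+}\int_\delta^{\|P\|}\log t\,d\mu_P(t)$, which is $> -\infty$ by part~(2).

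The step I expect to be the main obstacle is the first paragraph: passing from algebraicity of the formal series $G$ to the convergent Puiseux expansions at real points, to the finiteness of the atomic part, and to the power bound $\rho(t) \leq C|t - t_0|^{-\theta_0}$ with $\theta_0 < 1$. The delicate point is not the algebra but the use of the positivity and finiteness of $\mu_P$ to rule out a non-integrable singularity, which forces the relevant rational exponent to be genuinely less than $1$; this is precisely the mechanism already carried out in \cite{MR3356937}, so that the present theorem becomes a direct transcription once Theorem~\ref{thm:algebraic} is available.
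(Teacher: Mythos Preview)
Your proposal is correct and follows exactly the approach the paper intends: the paper does not give a proof but simply states that ``the approach is exactly the same as \cite{MR3356937}, Theorem~5.17,'' with Theorem~\ref{thm:algebraic} supplying the needed algebraicity of the Cauchy transform. Your reconstruction of that argument---Puiseux expansions at branch points, the finiteness of $\mu_P$ forcing the leading exponent to satisfy $\theta_0<1$, and the resulting integrability of $t^{-\theta_0}$ and $|\log t|\,t^{-\theta_0}$ near $0$---is precisely the mechanism from \cite{MR3356937}, and your handling of the ``in particular'' clause via $\mu_P=\mu_P'+\bigl(\sum_{\beta\neq\alpha}\mu(\beta)\bigr)\delta_0$ and Corollary~\ref{cor:alg} is correct.
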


%%%%%%%%%%%%%%%%%%%%%%%%%%

\section{Applications}\label{sec:app}
%%%%%%%%%%%%%%%%%%%%%%%%%%

\subsection{An application to Wishart matrices}

Recall that a Wishart matrix $A$ is an $M \times N$ random matrix with independent complex gaussian entries $a_{ij}$.  Moreover, the entries have the following covariances
\begin{align*}
E(a_{ij}\overline{a_{kl}}) &= \frac{1}{\sqrt{MN}}\delta_{ik}\delta_{jl}\\
E(a_{ij}a_{kl}) &= 0.
\end{align*}
\begin{nota}
For each integer, $n$, fix positive integers $M_{1}(n), \dots , M_{k}(n)$ with $M_{1}(n) = n$ for all $n$ and $M_{i}(n) \geq n$ for  each $i \in \{1, \dots , k\}$.  Assume further that $\lim_{n \rightarrow \infty} M_{i}(n)/M_{1}(n) = \gamma_{i}$.  Let $\set{A_{n_{ij}}}{1 \leq i, j \leq k}$ be a family of $M_{i}(n) \times M_{j}(n)$ Wishart matrices, and assume further that the entries of $A_{n_{ij}}$ are independent from the entries of $A_{n_{kl}}$ provided that $i \neq k$ or $j \neq l$. 
\end{nota} 
We are ready to state an application of our work to this family of Wishart matrices.

\begin{thm}\label{thm:approxlaw}
Let $Q$ be a noncommutative, nonconstant polynomial in the variables $(X_{ij})_{1 \leq i,j \leq n} \cup (X^{*}_{ij})_{1 \leq i,j \leq n}$.  Assume that $Q$ makes sense as a random matrix, $P_{n}$, when $A_{n_{ij}}$ and $A_{n_{ij}}^{*}$ are inserted for $X_{ij}$ and $X_{ij}^{*}$ respectively.  Assume further that $Q_{n} = Q_{n}^{*}$ and is of size $n \times n$.  Then
\begin{enumerate}
\item There is a unique compactly supported probability measure $\mu_{P}$ so that if $\tr_{n}$ is the (normalized) trace on the $n \times n$ complex matrices, then for each $m \in \N$,
$$
\tr_{n} \otimes E(Q_{n}^{m}) \rightarrow \int_{\R} x^{m}d\mu(x)
$$
as $n \rightarrow \infty$.

\item The measure $\mu_{P}$ has no atoms.  Moreover, the support of $\mu_{P}$ is a finite union of intervals where each interval has measure in the set $\Z[\set{\gamma_{i}}{1 \leq i \leq k}] \cap (0, 1]$.  In particular, the eigenvalues of $P$ do not cluster around any point.

\end{enumerate} 
\end{thm}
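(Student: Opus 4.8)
The plan is to identify the limiting object as a self-adjoint element of a corner $p_{\alpha}\cS(\Gamma,\mu)p_{\alpha}$ of a suitable weighted graph, with $\mu(\alpha)$ the minimal weight, and then quote Corollary \ref{cor:alg} verbatim. Build $(\Gamma,\mu)$ as follows. Take vertices $v_{1},\dots,v_{k}$ with $\mu(v_{i})=\gamma_{i}$; since $M_{i}(n)\geq M_{1}(n)=n$ we have $\gamma_{i}\geq\gamma_{1}=1$, so $v_{1}$ has minimal weight $1$. For $1\leq i<j\leq k$ put two parallel undirected edges between $v_{i}$ and $v_{j}$: one, oriented $s(\e)=v_{i}$, $t(\e)=v_{j}$, will model $A_{ij}$ (with $A_{ij}^{*}=X_{\e^{\op}}$), the other, oriented $s(\e)=v_{j}$, $t(\e)=v_{i}$, will model the \emph{independent} family $A_{ji}$. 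For each $i$ put two loops at $v_{i}$, with free semicircular generators $Y_{i,1},Y_{i,2}$, and model the square Gaussian matrix $A_{ii}$ by the circular element $c_{i}=\tfrac{1}{\sqrt 2}(Y_{i,1}+iY_{i,2})\in\cA$ (a single loop is not enough, since $X_{e}$ for a loop is self-adjoint). Under this dictionary, the hypothesis that $Q$ ``makes sense as an $n\times n$ random matrix'' says precisely that substituting $X_{\e}$ for $A_{ij}$ ($i\neq j$) and $c_{i}$ for $A_{ii}$ turns $Q$ into a well-formed self-adjoint element $\overline{Q}=\overline{Q}^{*}\in p_{v_{1}}\cA p_{v_{1}}$, because the index bookkeeping of $\Gamma$ is exactly the matrix-dimension bookkeeping.

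\textbf{Step 1 (existence of $\mu_{P}$).} By the operator-valued semicircular random matrix model \cite{MR1704661}, independent complex Gaussian matrices with the stated covariances, suitably normalized, converge in $\ast$-moments with respect to $\tr_{n}\otimes E$ to a family of $\cC$-valued semicircular elements free with amalgamation over $\cC=\ell^{\infty}(V(\Gamma))$, i.e.\ to the generators $X_{\e}$ of $\cS(\Gamma,\mu)$ of Theorem \ref{thm:freeexpectation}; the normalization $\tfrac{1}{\sqrt{MN}}$ matches $\Tr(X_{\e}^{*}X_{\e})=\sqrt{\mu(s(\e))\mu(t(\e))}$ (Remark \ref{rem:polar}) and $M_{i}(n)/M_{1}(n)\to\gamma_{i}$ produces the weights in the limit. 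Hence for each word $w$ in the $A_{ij},A_{ij}^{*}$ one has $\tr_{n}\otimes E(w)\to \Tr(\overline{w})/\Tr(p_{v_{1}})=\Tr(\overline{w})$ (recall $\mu(v_{1})=1$), where $\overline{w}$ is the corresponding word in $p_{v_{1}}\cA p_{v_{1}}$. Applying this to $w=Q_{n}^{m}$ and taking $\mu_{P}$ to be the spectral distribution of the bounded self-adjoint operator $\overline{Q}$ with respect to $\Tr(p_{v_{1}}\cdot p_{v_{1}})$ proves part (1): existence, uniqueness, and compact support are immediate from moment convergence to the moments of a bounded self-adjoint operator.

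\textbf{Step 2 (atomless, interval structure).} Since $Q$ is nonconstant and there are no algebraic relations among the $X_{\e}$ (which follows from the existence of conjugate variables, Lemma \ref{lem:closable}), $\overline{Q}$ is not a scalar multiple of $p_{v_{1}}$. Therefore Corollary \ref{cor:alg} applies to $\overline{Q}\in p_{v_{1}}\cS(\Gamma,\mu)p_{v_{1}}$ with $\alpha=v_{1}$ of minimal weight $1$: $\mu_{P}$ is absolutely continuous with respect to Lebesgue measure, and $\supp\mu_{P}$, being the spectrum of $\overline{Q}$, is a finite union of disjoint closed intervals, each of Lebesgue measure in $(0,\mu(v_{1})]\cap\Z[\{\mu(\beta):\beta\in V(\Gamma)\}]=(0,1]\cap\Z[\gamma_{1},\dots,\gamma_{k}]$. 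Absolute continuity gives $\mu_{P}(\{t\})=0$ for every $t$, so $\mu_{P}$ has no atoms; and since the averaged empirical eigenvalue distribution of $P_{n}$ converges weakly to this atomless $\mu_{P}$, no fixed point of $\R$ retains a positive proportion of the eigenvalues in the limit, which is the asserted absence of clustering.

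\textbf{Main obstacle.} The bulk of the work is Step 1: translating the block structure of the $A_{ij}$ into the edges and loops of $\Gamma$ (in particular routing each diagonal block through a pair of loops so that powers of $A_{ii}$ are correctly captured) and then quoting, with the correct normalization, the convergence of rectangular Gaussian ensembles to the $\cC$-valued semicircular family; once $\overline{Q}\in p_{v_{1}}\cA p_{v_{1}}$ is in hand, part (2) is a direct appeal to Corollary \ref{cor:alg}. A minor point to verify is that a nonconstant $\ast$-polynomial does not collapse to a scalar multiple of $p_{v_{1}}$ in the corner $p_{v_{1}}\cA p_{v_{1}}$.
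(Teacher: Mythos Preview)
Your approach is essentially the paper's: build a weighted graph whose corner $p_{v_1}\cA p_{v_1}$ receives the polynomial, quote a random-matrix convergence theorem for the block Gaussians to identify $\mu_P$ with the spectral law of $\overline{Q}$, and then apply Corollary~\ref{cor:alg}. The paper does exactly this, packaging the convergence step as a separate theorem citing \cite{MR2732052} rather than \cite{MR1704661}, and taking $\Gamma$ to be the graph with two undirected edges between each pair of \emph{distinct} vertices and no loops; your addition of two loops at each vertex to realize the square blocks $A_{ii}$ as circular elements is a legitimate refinement the paper does not spell out.

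One small point: your justification that $\overline{Q}$ is not a scalar multiple of $p_{v_1}$ via ``no algebraic relations among the $X_\e$ from the existence of conjugate variables'' is not the right argument. The algebra $\cA$ certainly has relations (the $p_\alpha X_\e p_\beta$ support conditions); the relevant fact is that monomials $X_{\e_1}\cdots X_{\e_n}$ indexed by distinct paths in $\vec{\Gamma}$ are linearly independent, which is immediate from the Fock-space realization. Since ``$Q$ makes sense as an $n\times n$ matrix'' forces every monomial in $Q$ to become a genuine loop at $v_1$, a nonconstant $Q$ yields $\overline{Q}\notin \C p_{v_1}$. The paper simply elides this verification.
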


Let $(Y, \mu)$ be a probability space, set $M(n) = \sum_{m = 1}^{k} M_{m}(n)$, and let $\cB_{n} = M_{M(n)}(\C) \otimes L^{\infty}(Y, \mu)$ be the algebra of $M(n) \times M(n)$ complex random matrices. If $\Tr_{n}$ is the (non-normalized) trace on the algebra of $M(n) \times M(n)$ matrices, (satisfying $\Tr(I_{M(n)}) = M(n)$), then we define $\phi_{n}: \cB_{n} \rightarrow \C$ by
$$
\phi_{n}(A) = \frac{1}{n}(\Tr_{n} \otimes E)(A).
$$
We will view elements in $\cB_{n}$ as $k \times k$ block matrices where the $ij$ block is of size $M_{i}(n) \times M_{j}(n)$.  If we are given the Wishart matrices $A_{n_{ij}}$ as above, then we let $\tilde{A}_{n_{ij}} \in \cB_{n}$ be the matrix whose $ij$ block is $A_{n_{ij}}$ and whose other blocks are all zero.  Notice that if $P$ is as in the statement of Theorem \ref{thm:approxlaw} so that $P((A_{n_{ij}})_{1 \leq i, j, \leq k}, (A^{*}_{n_{ij}})_{1 \leq i, j, \leq k})$ is an $n \times n$ random matrix, then $P((\tilde{A}_{n_{ij}})_{1 \leq i, j, \leq k}, (\tilde{A}^{*}_{n_{ij}})_{1 \leq i, j, \leq k}) \in \cB_{n}$ has its $1,1$ block equal to $P((A_{n_{ij}})_{1 \leq i, j, \leq k}, (A^{*}_{n_{ij}})_{1 \leq i, j, \leq k})$ and all other blocks zero. 

To be able to prove Theorem \ref{thm:approxlaw}, we let $(\Gamma, V, E, \mu)$ be the complete graph on $k$ vertices with two unoriented edges, $e_{ij}$ and $e_{ji}$ having the distinct vertices $i$ and $j$ as endpoints.  Let $\mu: V \rightarrow \R_{+}$ given by $\mu(n) = \gamma_{n}$, and let $\vec{\Gamma}$ be the directed version of $\Gamma$.  The following theorem was proved in \cite{MR2732052}.

\begin{thm}\label{thm:GJSapprox}(\cite{MR2732052})
The elements $\tilde{A}_{n_{ij}}$ converge in $*$-distribution, under $\phi_{n}$, to the elements $X_{\e_{ij}}$.  More specifically, if $Q$ is any non-commutative polynomial in the variables $(X_{ij})_{1 \leq i, j \leq k}$ and $(X^{*}_{ij})_{1 \leq i, j \leq k}$, then
$$
\lim_{n \rightarrow \infty} \phi_{n}(Q((\tilde{A}_{n_{ij}})_{1 \leq i, j, \leq k}, (\tilde{A}^{*}_{n_{ij}})_{1 \leq i, j, \leq k})) = \Tr_{\cS(\Gamma, \mu)}(Q((X_{\e_{ij}})_{1 \leq i, j, \leq k}, (X_{\e^{\op}_{ij}})_{1 \leq i, j, \leq k})) 
$$

\end{thm}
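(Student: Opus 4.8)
The plan is to prove the stronger, fully combinatorial statement that all mixed $*$-moments of the $\tilde{A}_{n_{ij}}$ converge, by evaluating them with the Gaussian Wick expansion and recognising the genus-zero part as the Fock-space formula for $\Tr_{\cS(\Gamma,\mu)}$. Since a $*$-distribution is determined by the mixed $*$-moments of the generators, it suffices to fix directed edges $\e_1,\dots,\e_m$ of $\vec{\Gamma}$ and, writing $\tilde{A}_{n,\e}$ for $\tilde{A}_{n_{ij}}$ when $\e=\e_{ij}$ and for $\tilde{A}_{n_{ij}}^{*}$ when $\e=\e_{ij}^{\op}$ (so that $\tilde{A}_{n,\e}$ is matched with $X_{\e}$, using $X_{\e_{ij}}^{*}=X_{\e_{ij}^{\op}}$), to show
$$
\lim_{n\to\infty}\phi_n\big(\tilde{A}_{n,\e_1}\cdots\tilde{A}_{n,\e_m}\big)=\Tr\big(X_{\e_1}\cdots X_{\e_m}\big).
$$
Expanding the block product, the left-hand side equals $n^{-1}$ times a sum over internal matrix indices of $E$ of a monomial in the entries of the $A_{n_{ij}}$ and their conjugates; this monomial, like $X_{\e_1}\cdots X_{\e_m}$, vanishes unless $\e_1\cdots\e_m$ is a loop in $\vec{\Gamma}$, so the degenerate cases agree and we may assume the $\e_r$ form a loop based at a vertex $v$.

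Next I would apply the Wick formula. Because $E(a_{ij}\overline{a_{kl}})=(M_iM_j)^{-1/2}\delta_{ik}\delta_{jl}$ and $E(a_{ij}a_{kl})=0$, the expectation becomes a sum over pairings $\pi$ of $\{1,\dots,m\}$ that match each position carrying a plain Wishart entry with one carrying a conjugated entry of the same block type---so a pair $\{r,s\}\in\pi$ forces $\e_s=\e_r^{\op}$---with the two associated pairs of matrix indices identified. Summing out the free indices, a pairing $\pi$ contributes
$$
n^{-1}\Big(\prod_{\{r,s\}\in\pi}\big(M_{s(\e_r)}M_{t(\e_r)}\big)^{-1/2}\Big)\Big(\prod_{\text{index cycles of }\pi}M_{\ell}\Big).
$$
Using $M_\ell(n)/n\to\gamma_\ell$ and $M_1(n)=n$, the classical genus count shows this is $O(1)$, with the exponent of $n$ equal to $(\#\text{ index cycles of }\pi)-m/2-1\le 0$, and with equality precisely when $\pi$ is non-crossing as a pairing of the cyclically ordered points $1,\dots,m$; for such $\pi$ the limit is a monomial in the $\gamma_\ell^{\pm1}$, while every crossing $\pi$ contributes $o(1)$.

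It then remains to identify the surviving sum with $\Tr(X_{\e_1}\cdots X_{\e_m})$. Since $\Tr=\tr\circ E$ with $E(x)=PxP$ the vacuum expectation on $\cF(\cX)$, expanding each $X_{\e_r}$ into a creation part $a_{\e_r}\ell(\e_r)$ and an annihilation part $a_{\e_r}^{-1}\ell(\e_r^{\op})^{*}$ (Remark \ref{rem:opcirc}) and evaluating against the vacuum yields exactly a sum over non-crossing pairings $\pi$ of $\{1,\dots,m\}$ with $\e_s=\e_r^{\op}$ for each pair $\{r,s\}$, each weighted by a product of the scalars $a_\e^{\pm1}$ together with the factors of $\mu$ produced by the inner products $\langle\e|\e\rangle=p_{t(\e)}$ and by $\tr(p_v)=\mu(v)$. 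One checks that, after the substitution $\mu(\ell)=\gamma_\ell$, the Fock-space weight of each non-crossing pairing equals the corresponding limiting random-matrix weight, the index-cycle factors $\gamma_\ell$ on the matrix side being exactly the factors coming from $\langle\e|\e\rangle$ and $\tr$ on the Fock side; the simplest instance is the second moment, where $\Tr(X_{\e}^{*}X_{\e})=\sqrt{\mu(s(\e))\mu(t(\e))}$ matches $\phi_n(\tilde{A}_{n,\e}^{*}\tilde{A}_{n,\e})=n^{-1}\sqrt{M_{s(\e)}M_{t(\e)}}\to\sqrt{\gamma_{s(\e)}\gamma_{t(\e)}}$. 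A clean way to organise the general comparison is to run the recursion of Lemma \ref{lem:rec} on the right-hand side and the parallel ``remove the last letter and sum over its Wick partner'' recursion on the left-hand side, and to observe that they are the same recursion with the same base case; this gives the moment identity, hence the theorem.

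The main obstacle is the genus-expansion bookkeeping, which is heavier here than in the square case because of the rectangular blocks of distinct sizes $M_i(n)\sim\gamma_i n$: one must keep careful track of which strands of matrix indices close into cycles, prove that the number of cycles is maximised exactly on the non-crossing pairings, and check that the resulting monomials in the $\gamma_\ell^{\pm1}$ are precisely the Fock-space covariance weights. A shorter but less self-contained route is to note that the $\tilde{A}_{n_{ij}}$ form an asymptotically $\ell^{\infty}(V(\Gamma))$-free family of operator-valued semicircular elements with the covariance dictated by the normalisation above, and then to invoke Shlyakhtenko's operator-valued semicircular functor \cite{MR1704661} together with the realisation of $\cS(\Gamma,\mu)$ from \cite{1401.2485}; this is in essence the argument of \cite{MR2732052}.
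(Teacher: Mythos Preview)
The paper does not supply its own proof of this statement: Theorem~\ref{thm:GJSapprox} is quoted from \cite{MR2732052} and then used as a black box in the proof of Theorem~\ref{thm:approxlaw}. So there is nothing in the present paper to compare your argument against.

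That said, your sketch is the correct and standard route, and it is in substance the argument of \cite{MR2732052} (as you note at the end). The reduction to mixed $*$-moments of a loop word, the Wick expansion with the constraint $\e_s=\e_r^{\op}$ on each pair, the genus bound showing only non-crossing pairings survive as $n\to\infty$, and the identification of the surviving weight with the vacuum-expectation formula on $\cF(\cX)$ are all right. Your proposed verification via the recursion of Lemma~\ref{lem:rec} on the limit side versus the ``peel off the last letter and sum over its Wick partner'' recursion on the matrix side is a clean way to finish; both recursions have the same structure and the same base case $\Tr(X_\e^{*}X_\e)=\sqrt{\mu(s(\e))\mu(t(\e))}$ matching $\phi_n(\tilde A_{n,\e}^{*}\tilde A_{n,\e})\to\sqrt{\gamma_{s(\e)}\gamma_{t(\e)}}$. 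The only place to be careful is the rectangular genus count you flag: because the block sizes $M_\ell(n)$ differ, the Euler-characteristic argument must be phrased so that each index cycle contributes the factor $M_\ell(n)$ attached to the vertex $\ell$ that cycle lives over; since $M_\ell(n)/n\to\gamma_\ell\in[1,\infty)$ this does not affect which pairings survive, only the limiting constants, and those are exactly the monomials in $\gamma_\ell^{\pm 1}$ that match the $a_\e^{\pm 1}$ and $\mu$ factors on the Fock side.
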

Using this theorem, we can prove Theorem \ref{thm:approxlaw}

\begin{proof}[Proof of Theorem \ref{thm:approxlaw}]
In the algebra $\cB_{n}$, let $P_{1, n}$ be the matrix whose $1, 1$ block is the identity and all other blocks are zero and $Q$ is in the statement of Theorem \ref{thm:approxlaw}.  In $\cB_{n}$, If $\tilde{Q}_{n} = Q((\tilde{A}_{n_{ij}})_{1 \leq i, j, \leq k}, (\tilde{A}^{*}_{n_{ij}})_{1 \leq i, j, \leq k}$, then $P_{1, n}\tilde{Q}_{n}P_{1, n} = \tilde{Q}_{n}$, and the $1, 1$ block of this random matrix is $Q_{n} = Q(({A}_{n_{ij}})_{1 \leq i, j, \leq k}, ({A}^{*}_{n_{ij}})_{1 \leq i, j, \leq k})$.  By Theorem \ref{thm:GJSapprox}, 
$$
\tr(Q_{n}^{m}) = \int_{\R} x^{m}d\mu_{Q}(x)
$$
with $\mu_{Q}$ compactly supported.  Since the law of $Q((X_{\e_{ij}})_{1 \leq i, j, \leq k}, (X_{\e^{\op}_{ij}})_{1 \leq i, j, \leq k})) $ has no atoms in $p_{1}\cS(\Gamma, \mu)p_{1}$, it follows that $\mu_{Q}$ has no atoms.  Moreover the absolute continuity of $\mu_{Q}$ and the rest of statement 2 in Theorem \ref{thm:approxlaw} follows from Corollary \ref{cor:alg}.
\end{proof}
%%%%%%%%%%%%%%%%%%%%%%%%%.

\subsection{An application to planar algebras}

Let $\cP_{\bullet}$ be a (sub)factor planar algebra.   For the definition and basic properties of (sub)factor planar algebras, see \cite{0902.1294, 1208.5505}.  We briefly recall the construction of \cite{MR2732052}.  Let $\Gr_{0}(\cP_{\bullet}) = \bigoplus_{n \geq 0} \cP_{n}$.  We endow $\Gr_{0}(\cP_{\bullet})$ with the following multiplication
$$
x \wedge y = \begin{tikzpicture}[baseline = .1cm]
	\draw (0, 0)--(0, .8);
	\draw (1.2, 0)--(1.2, .8);
	\nbox{unshaded}{(0,0)}{.4}{0}{0}{$x$}
	\nbox{unshaded}{(1.2,0)}{.4}{0}{0}{$y$}
	\node at (-.2, .6) {\scriptsize{$n$}};
	\node at (1, .6) {\scriptsize{$m$}};
\end{tikzpicture}
$$
for $x \in \cP_{n}$ and $y \in \cP_{m}$.  $\Gr_{0}(\cP_{\bullet})$ is endowed with the Voiculescu trace
$$
\tr(x) = \begin{tikzpicture}[baseline = .1cm]
	\draw (0, 0)--(0, .8);
	\nbox{unshaded}{(0,0)}{.4}{0}{0}{$x$}
	\nbox{unshaded}{(0,1.2)}{.4}{0}{0}{$\TL$}
	\node at (-.2, .6) {\scriptsize{$n$}};
\end{tikzpicture}
$$
where $\TL$ represents the sum of all Temperley-Lieb diagrams with all strings having endpoints at the bottom of the box.  We have the following facts about $\Gr_{0}(\cP)$.

\begin{thm}\label{thm:GJS}
\be

\item $\tr$ is positive definite on $\Gr_{0}(\cP_{\bullet})$, and the action of $x \in \Gr_{0}(\cP_{\bullet})$ on $\Gr_{0}(\cP_{\bullet})$ by left multiplication extends to an action on $L^{2}(\cP_{\bullet}, \tr)$ by bounded operators \cite{MR2732052}.

\item Let $\cM$ be the von-Neumann algebra generated by the action of $\Gr_{0}(\cP_{\bullet})$ on $L^{2}(\Gr_{0}(\cP_{\bullet}), \tr)$.  Then $\cM$ is a II$_{1}$ factor \cite{MR2732052}.  If $\cP_{\bullet}$ is finite depth, then $\cM \cong L(\F_{1 + 2(\delta - 1)I})$ where $I$ is the global index of $\cP_{\bullet}$ and $\delta$ is the loop paramater \cite{MR2807103}.  If $\cP_{\bullet}$ is infinite-depth, then $\cM \cong L(\F_{\infty})$ \cite{MR3110503}. 

\item Let $\cB$ be the C$^{*}$-algebra generated by the action of $\Gr_{0}(\cP_{\bullet})$ on $L^{2}(\Gr_{0}(\cP_{\bullet}), \tr)$.  If $\Gamma$ is the principal graph of $\cP$, then $K_{0}(\cB) \cong \Z\set{[\alpha]}{\alpha \in V(\Gamma)}$, moreover $\cB$ is isomorphic to $p_{\star}\cS(\Gamma, \mu)p_{\star}$ with $\star$ the unique depth-zero vertex of $\Gamma$ and $\mu$ the induced weighting from $\cP_{\bullet}$ \cite{MR3266249}.  If $\cA$ is the $*$-algebra generated by $\set{p_{\alpha}, \, X_{\e}}{\alpha \in V(\Gamma), \text{ and } \e \in E(\vec{\Gamma})}$, then this isomorphism carries $\Gr_{0}(\cP_{\bullet})$ onto $p_{\star}\cA p_{\star}$.

\ee
\end{thm}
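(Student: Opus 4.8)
The plan is to treat Theorem~\ref{thm:GJS} as a compilation of results from \cite{MR2732052, MR2807103, MR3110503, MR3266249}, so the proof amounts to recalling the relevant constructions and citing them, while isolating the one statement that does genuine work for the present paper, namely (3). First, for (1): I would recall from \cite{MR2732052} that $(\Gr_0(\cP_\bullet), \wedge, \tr)$ embeds as a tracial $*$-subalgebra of a von Neumann algebra built on a Fock-type Hilbert bimodule; after diagonalizing the diagonal algebra against the principal graph $\Gamma$, the generators of $\Gr_0(\cP_\bullet)$ become an $\ell^\infty(V(\Gamma))$-valued semicircular family indexed by the edges of $\Gamma$, each acting as $\ell(\xi)+\ell(\xi)^{*}$ on the bimodule. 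Positive definiteness of $\tr$ is then read off the Fock-space inner product, and boundedness of left multiplication follows because $\Gr_0(\cP_\bullet)$ is generated by finitely many such (bounded) creation-plus-annihilation operators together with the bounded action of the diagonal; alternatively one may invoke the explicit ``moves'' norm estimate of \cite{MR2732052}.

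For (2): factoriality is proved in \cite{MR2732052}; the argument I would reproduce shows the center is trivial by exhibiting, inside $\cM$, enough Haar unitaries coming from the continuous functional calculus of elements like $X_{\e}^{*}X_{\e}$ (cf.\ Theorem~\ref{thm:freeexpectation}) to force uniqueness of the trace. The finite-depth identification $\cM \cong L(\F_{1 + 2(\delta-1)I})$ is the main theorem of \cite{MR2807103}: one realizes $\cM$ via Shlyakhtenko's operator-valued free Gaussian functor \cite{MR1704661} applied to the adjacency bimodule of $\Gamma$ over $\ell^\infty(V(\Gamma))$, then runs Dykema's free-dimension bookkeeping \cite{MR1201693} to collapse the resulting amalgamated free product to an interpolated free group factor with the stated parameter. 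The infinite-depth case $\cM \cong L(\F_\infty)$ is \cite{MR3110503}, via the standard-embedding argument for the inclusions $\cM(\Gamma,\mu) \hookrightarrow \cM(\Gamma',\mu)$.

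The substantive step is (3), and here I would construct a diagrammatic $*$-isomorphism $\Phi\colon \Gr_0(\cP_\bullet) \to p_\star\cA p_\star$ sending a ``path basis'' element of $\cP_n$ --- obtained by resolving a box along the principal graph $\Gamma$ --- to the corresponding word $p_\star X_{\e_1}\cdots X_{\e_m} p_\star$ along a loop $\e_1\cdots\e_m$ in $\vec{\Gamma}$ based at the depth-zero vertex $\star$, with $\mu$ the weighting on $V(\Gamma)$ induced by the trace/index data of $\cP_\bullet$. The key verification is that $\Phi$ intertwines the Voiculescu trace with $\Tr(p_\star\,\cdot\,p_\star)$; this is the planar partition-function computation, and on the $\cS(\Gamma,\mu)$ side it is exactly the recursion recorded in Lemma~\ref{lem:rec}. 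Trace-compatibility then lets $\Phi$ extend by $L^2$-continuity to an isomorphism $\cB \cong p_\star\cS(\Gamma,\mu)p_\star$. Finally, since $\Gamma$ is connected, every $p_\alpha$ is subequivalent (through words in the $X_\e$) to a subprojection of $p_\star$, so $p_\star$ is full in $\cS(\Gamma,\mu)$ and $K_0(\cB) = K_0(p_\star\cS(\Gamma,\mu)p_\star) \cong K_0(\cS(\Gamma,\mu)) = \Z\set{[p_\alpha]}{\alpha \in V(\Gamma)}$ by Corollary~\ref{cor:K-theory}.

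The main obstacle is the honest part of (3): constructing the $\Gamma$-adapted path basis of each $\cP_n$ and checking that $\Phi$ is at once a $*$-algebra homomorphism \emph{and} trace-preserving. I expect trace-preservation --- equivalently, matching the planar-algebra partition function of a loop with the Fock-space moment formula for $X_{\e_1}\cdots X_{\e_m}$ --- to be the crux; this is carried out in \cite{MR3266249}, and Lemma~\ref{lem:rec} above is the $\cS(\Gamma,\mu)$-side bookkeeping that makes the match transparent.
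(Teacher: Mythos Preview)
The paper does not prove Theorem~\ref{thm:GJS}; it records it as a compilation of results from \cite{MR2732052,MR2807103,MR3110503,MR3266249} and immediately passes to its corollary. Your proposal correctly identifies this and supplies a reasonable roadmap through the cited literature, so in that sense it matches the paper's ``proof'' (namely, citation) and even goes further.

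One small slip in your sketch of (3): you write that ``every $p_{\alpha}$ is subequivalent (through words in the $X_{\e}$) to a subprojection of $p_{\star}$.'' For the principal-graph weighting $\mu$, the depth-zero vertex $\star$ has \emph{minimal} weight, so the subequivalence goes the other way: $p_{\star}$ is (Murray--von~Neumann) subequivalent to a subprojection of each $p_{\alpha}$, not conversely. Your conclusion that $p_{\star}$ is full in $\cS(\Gamma,\mu)$ is nonetheless correct, but for a different reason: the Perron--Frobenius eigenvector condition $\delta\,\mu(\alpha) = \sum_{\beta\sim\alpha} n_{\alpha,\beta}\mu(\beta)$ with $\delta>1$ forces $V_{\geq}=\emptyset$, so $\cS(\Gamma,\mu)$ is simple (by the results of Section~\ref{sec:ideal} or Corollary~\ref{cor:infinite}) and any nonzero projection is full. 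With that correction, your $K_{0}$ argument via Corollary~\ref{cor:K-theory} goes through.
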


Our work on the laws of elements in $\cA$ immediately implies the following corollary.

\begin{cor}
If $x = x^{*} \in \Gr_{0}(\cP_{\bullet})$ is not a scalar, then the Cauchy transform of $x$ with respect to $\tr$ is algebraic, the law of $x$ with respect to $\tr$ has no atoms, and the spectrum of $x$ is a finite union of closed intervals, each of which having measure in $(0, 1]  \cap \Z\set{\mu(\alpha)}{\alpha \in V(\Gamma)}$
\end{cor}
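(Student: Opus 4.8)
The plan is to transfer the statement to the free graph algebra $\cS(\Gamma,\mu)$ via Theorem~\ref{thm:GJS}(3) and then read it off from Theorems~\ref{thm:nonzero} and \ref{thm:algebraic} together with their Corollaries~\ref{cor:atomless} and \ref{cor:alg}. Concretely: by Theorem~\ref{thm:GJS}(3), if $\Gamma$ is the principal graph of $\cP_\bullet$ with induced weighting $\mu$ and $\star$ is its unique depth-zero vertex, then the isomorphism there carries the tracial $*$-algebra $(\Gr_0(\cP_\bullet),\tr)$ onto $p_\star\cA p_\star$ equipped with $\Tr(p_\star\,\cdot\,p_\star)$, the Voiculescu trace corresponding to $\Tr(p_\star\,\cdot\,p_\star)$ (note $\Tr(p_\star)=\mu(\star)=1$, so this is genuinely a state and the two notions of ``law of $x$'' agree). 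The one structural fact to record is that $\mu(\star)=\min\{\mu(\beta) : \beta\in V(\Gamma)\}$: the depth-zero vertex corresponds to the trivial bimodule, of dimension $1$, while every vertex of $\Gamma$ has dimension $\geq 1$. Hence $\alpha:=\star$ is a legitimate minimal-weight vertex for Theorems~\ref{thm:nonzero}, \ref{thm:algebraic} and Corollaries~\ref{cor:atomless}, \ref{cor:alg}.

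If $\cP_\bullet$ has infinite depth then $\Gamma$ is infinite, while Section~\ref{sec:poly} is phrased for finite $\Gamma$, so I would first make a routine reduction to a finite subgraph. A fixed $x\in\Gr_0(\cP_\bullet)$ lies in $\bigoplus_{n=0}^{N}\cP_n$ for some $N$, and so, under the identification above, involves only the $X_\e$ and $p_\alpha$ associated to edges and vertices of bounded depth; let $\Gamma'$ be the finite connected subgraph of $\Gamma$ spanned by those vertices. Because the $X_\e$ are free with amalgamation over $\cC$, the mixed moments of $x$, hence the law of $x$ with respect to $\Tr$, are the same whether computed in $\cS(\Gamma,\mu)$ or in $\cS(\Gamma',\mu)$; and $\mu(\star)=1$ is still the minimum of $\mu$ on $\Gamma'$. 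Thus it suffices to prove the statement when $\Gamma$ is finite.

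Finally, assuming $\Gamma$ finite and writing $x=x^*\in p_\star\cA p_\star$, the hypothesis that $x$ is not a scalar in $\Gr_0(\cP_\bullet)$ is precisely the statement that $x$ is not a scalar multiple of $p_\star$, which is exactly the hypothesis of Corollaries~\ref{cor:atomless} and \ref{cor:alg}. Then: algebraicity of the Cauchy transform of $x$ is Theorem~\ref{thm:algebraic} with $n=1$; the absence of atoms and the absolute continuity with respect to Lebesgue measure are Corollary~\ref{cor:atomless} and Corollary~\ref{cor:alg}; and the description of the spectrum of $x$ in $p_\star\cS(\Gamma,\mu)p_\star$ as a finite disjoint union of closed intervals, each of measure in $(0,\mu(\star)]\cap\Z[\{\mu(\beta) : \beta\in V(\Gamma)\}]=(0,1]\cap\Z[\{\mu(\beta) : \beta\in V(\Gamma)\}]$, is the last assertion of Corollary~\ref{cor:alg}.

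The corollary is therefore essentially immediate once the translation is in place; the only step that requires any care --- and the closest thing to an obstacle here --- is the bookkeeping of the first paragraph: verifying that the isomorphism of Theorem~\ref{thm:GJS}(3) intertwines the Voiculescu trace with $\Tr(p_\star\,\cdot\,p_\star)$, and that the depth-zero vertex indeed realizes $\min_\beta \mu(\beta)$. All of the analytic substance is already carried by the earlier theorems.
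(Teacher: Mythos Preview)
Your proposal is correct and follows exactly the paper's approach: transfer via Theorem~\ref{thm:GJS}(3), note that $\star$ has minimal weight, and invoke Corollary~\ref{cor:alg}. Your explicit reduction to a finite subgraph $\Gamma'$ in the infinite-depth case is a worthwhile piece of bookkeeping that the paper's one-line proof passes over in silence (Section~\ref{sec:poly} does assume $\Gamma$ finite), and your justification via amalgamated freeness is the right one.
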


\begin{proof}
Since $\star$ is of minimal weight in $V(\Gamma)$, this is immediate from statement (3) of Theorem \ref{thm:GJS} as well as Corollary \ref{cor:alg}.

\end{proof}
%%%%%%%%%%%%%%%%%%%%%%%%%%%%%%%%%%%%%%%%%%%%%%%%%%%%%%%%%%%
\bibliographystyle{amsalpha}

{\footnotesize
\bibliography{bibliography}

\providecommand{\bysame}{\leavevmode\hbox to3em{\hrulefill}\thinspace}
\providecommand{\MR}{\relax\ifhmode\unskip\space\fi MR }
% \MRhref is called by the amsart/book/proc definition of \MR.
\providecommand{\MRhref}[2]{%
  \href{http://www.ams.org/mathscinet-getitem?mr=#1}{#2}
}
\providecommand{\href}[2]{#2}
\begin{thebibliography}{MSW14}

\bibitem[Avi82]{MR654842}
Daniel Avitzour, \emph{Free products of {$C^{\ast} $}-algebras}, Trans. Amer.
  Math. Soc. \textbf{271} (1982), no.~2, 423--435, \mathscinet{MR654842},
  \doi{10.2307/1998890}. \MR{654842 (83h:46070)}

\bibitem[BHP12]{1208.5505}
Arnaud Brothier, Michael Hartglass, and David Penneys, \emph{Rigid
  ${C}^*$-tensor categories of bimodules over interpolated free group factors},
  Journal of Mathematical Physics \textbf{53} (2012), 123525,
  \doi{10.1063/1.4769178}, \arxiv{1208.5505}.

\bibitem[Bis97]{MR1424954}
Dietmar Bisch, \emph{Bimodules, higher relative commutants and the fusion
  algebra associated to a subfactor}, Operator algebras and their applications
  (Waterloo, ON, 1994/1995), 13-63, Fields Inst. Commun., 13, Amer. Math. Soc.,
  Providence, RI, 1997, \mathscinet{MR1424954}, \googlebooks{_InIRTO8Y7gC}.

\bibitem[CDS14]{1411.0268}
S.~Curran, Y.~Dabrowski, and D.~Shlyakhtenko, \emph{Free analysis and planar
  algebras}, available at \url{http://arxiv.org/abs/1411.0268}, 2014.

\bibitem[CK80]{MR561974}
Joachim Cuntz and Wolfgang Krieger, \emph{A class of {$C^{\ast} $}-algebras and
  topological {M}arkov chains}, Invent. Math. \textbf{56} (1980), no.~3,
  251--268, \mathscinet{MR561974}, \doi{10.1007/BF01390048}. \MR{561974
  (82f:46073a)}

\bibitem[CS15]{1408.0580}
Ian Charlesworth and Dimitri Shlyakhtenko, \emph{Regularity of polynomials in
  free variables}, available at \url{http://arxiv.org/abs/1408.0580v2}, 2015.

\bibitem[DHR97]{MR1478545}
Ken Dykema, Uffe Haagerup, and Mikael R{\o}rdam, \emph{The stable rank of some
  free product {$C^*$}-algebras}, Duke Math. J. \textbf{90} (1997), no.~1,
  95--121, \mathscinet{MR1478545}, \doi{10.1215/S0012-7094-97-09004-9}.
  \MR{1478545 (99g:46077a)}

\bibitem[DR98]{MR1601917}
K.~J. Dykema and M.~R{\o}rdam, \emph{Projections in free product
  {$C^*$}-algebras}, Geom. Funct. Anal. \textbf{8} (1998), no.~1, 1--16,
  \mathscinet{MR1601917}, \doi{10.1007/s000390050046}. \MR{1601917 (99d:46075)}

\bibitem[DR13]{MR3164718}
Kenneth~J. Dykema and Daniel Redelmeier, \emph{The amalgamated free product of
  hyperfinite von {N}eumann algebras over finite dimensional subalgebras},
  Houston J. Math. \textbf{39} (2013), no.~4, 1313--1331. \MR{3164718}

\bibitem[Dyk93]{MR1201693}
Ken Dykema, \emph{Free products of hyperfinite von {N}eumann algebras and free
  dimension}, Duke Math. J. \textbf{69} (1993), no.~1, 97--119,
  \mathscinet{MR1201693}, \doi{10.1215/S0012-7094-93-06905-0}. \MR{1201693
  (93m:46071)}

\bibitem[Dyk99]{MR1473439}
Kenneth~J. Dykema, \emph{Simplicity and the stable rank of some free product
  {$C\sp *$}-algebras}, Trans. Amer. Math. Soc. \textbf{351} (1999), no.~1,
  1--40, \mathscinet{MR1473439}, \doi{10.1090/S0002-9947-99-02180-7}.
  \MR{1473439 (99g:46078)}

\bibitem[Ger]{Germain}
Emmanuel Germain, \emph{{KK}-theory of ${C}^*$-algebras related to {P}imsner
  algebras}, available at \url{http://www.math.jussieu.fr/~germain}.

\bibitem[GJS10]{MR2732052}
Alice Guionnet, Vaughan F.~R. Jones, and Dimitri Shlyakhtenko, \emph{Random
  matrices, free probability, planar algebras and subfactors}, Quanta of maths,
  Clay Math. Proc., vol.~11, Amer. Math. Soc., Providence, RI, 2010,
  \mathscinet{MR2732052}, \arXiv{0712.2904v2}, pp.~201--239.

\bibitem[GJS11]{MR2807103}
\bysame, \emph{A semi-finite algebra associated to a subfactor planar algebra},
  J. Funct. Anal. \textbf{261} (2011), no.~5, 1345--1360, \arXiv{0911.4728},
  \mathscinet{MR2807103}, \doi{10.1016/j.jfa.2011.05.004}. \MR{2807103
  (2012j:46091)}

\bibitem[Har13]{MR3110503}
Michael Hartglass, \emph{Free product von {N}eumann algebras associated to
  graphs, and {G}uionnet, {J}ones, {S}hlyakhtenko subfactors in infinite
  depth}, J. Funct. Anal. \textbf{265} (2013), no.~12, 3305--3324,
  \mathscinet{MR3110503}, \doi{10.1016/j.jfa.2013.09.011}. \MR{3110503}

\bibitem[HP14a]{1401.2485}
Michael Hartglass and David Penneys, \emph{${C}^*$-algebras from planar
  algebras {I}: canonical ${C}^*$-algebras associated to a planar algebra},
  2014, arXiv:1401.2485. To appear in Trans. Amer. Math. Soc.

\bibitem[HP14b]{MR3266249}
\bysame, \emph{{$C\sp *$}-algebras from planar algebras {II}: {T}he
  {G}uionnet-{J}ones-{S}hlyakhtenko {$C\sp *$}-algebras}, J. Funct. Anal.
  \textbf{267} (2014), no.~10, 3859--3893. \MR{3266249}

\bibitem[Iva11]{MR2782689}
Nikolay~A. Ivanov, \emph{On the structure of some reduced amalgamated free
  product {$C\sp *$}-algebras}, Internat. J. Math. \textbf{22} (2011), no.~2,
  281--306, \mathscinet{MR2782689}, \doi{10.1142/S0129167X11006799}.
  \MR{2782689 (2012c:46146)}

\bibitem[Jon83]{MR696688}
Vaughan F.~R. Jones, \emph{Index for subfactors}, Invent. Math. \textbf{72}
  (1983), no.~1, 1--25, \mathscinet{MR696688}, \doi{10.1007/BF01389127}.

\bibitem[MSW14]{1407.5715}
Tobias Mai, Roland Speicher, and Mortiz Weber, \emph{Absence of algebraic
  relations and of zero divisors under the assumption of finite non-microstates
  free fisher information}, available at\url{http://arxiv.org/abs/1407.5715},
  2014.

\bibitem[Pet10]{0902.1294}
Emily Peters, \emph{A planar algebra construction of the {H}aagerup subfactor},
  International Journal of Mathematics \textbf{21} (2010), no.~8, 987--1045,
  \arXiv{0902.1294}.

\bibitem[Pim97]{MR1426840}
Michael~V. Pimsner, \emph{A class of {$C^*$}-algebras generalizing both
  {C}untz-{K}rieger algebras and crossed products by {${\bf Z}$}}, Free
  probability theory ({W}aterloo, {ON}, 1995), Fields Inst. Commun., vol.~12,
  Amer. Math. Soc., Providence, RI, 1997, \mathscinet{MR1426840}, pp.~189--212.
  \MR{1426840 (97k:46069)}

\bibitem[Pop95]{MR1334479}
Sorin Popa, \emph{An axiomatization of the lattice of higher relative
  commutants of a subfactor}, Invent. Math. \textbf{120} (1995), no.~3,
  427--445, \mathscinet{MR1334479} \doi{10.1007/BF01241137}.

\bibitem[Rie83]{MR693043}
Marc~A. Rieffel, \emph{Dimension and stable rank in the {$K$}-theory of
  {$C\sp{\ast}$}-algebras}, Proc. London Math. Soc. (3) \textbf{46} (1983),
  no.~2, 301--333, \mathscinet{MR693043}, \doi{10.1112/plms/s3-46.2.301}.
  \MR{693043 (84g:46085)}

\bibitem[Sau03]{MR2048733}
Roman Sauer, \emph{Power series over the group ring of a free group and
  applications to {N}ovikov-{S}hubin invariants}, High-dimensional manifold
  topology, World Sci. Publ., River Edge, NJ, 2003, pp.~449--468. \MR{2048733
  (2005b:16078)}

\bibitem[Sch62]{MR0142781}
M.~P. Sch{\"u}tzenberger, \emph{On a theorem of {R}. {J}ungen}, Proc. Amer.
  Math. Soc. \textbf{13} (1962), 885--890. \MR{0142781 (26 \#350)}

\bibitem[Shl99]{MR1704661}
Dimitri Shlyakhtenko, \emph{{$A$}-valued semicircular systems}, J. Funct. Anal.
  \textbf{166} (1999), no.~1, 1--47, \mathscinet{MR1704661},
  \doi{10.1006/jfan.1999.3424}. \MR{1704661 (2000j:46124)}

\bibitem[SS15]{MR3356937}
Dimitri Shlyakhtenko and Paul Skoufranis, \emph{Freely independent random
  variables with non-atomic distributions}, Trans. Amer. Math. Soc.
  \textbf{367} (2015), no.~9, 6267--6291. \MR{3356937}

\bibitem[VDN92]{MR1217253}
D.~V. Voiculescu, K.~J. Dykema, and A.~Nica, \emph{Free random variables}, CRM
  Monograph Series, vol.~1, American Mathematical Society, Providence, RI,
  1992, A noncommutative probability approach to free products with
  applications to random matrices, operator algebras and harmonic analysis on
  free groups. \MR{1217253 (94c:46133)}

\bibitem[Voi93]{MR1228526}
Dan Voiculescu, \emph{The analogues of entropy and of {F}isher's information
  measure in free probability theory. {I}}, Comm. Math. Phys. \textbf{155}
  (1993), no.~1, 71--92. \MR{1228526 (94k:46137)}

\end{thebibliography}
}
\end{document}